\newtheorem{theorem}{Theorem}[section]
\newtheorem{prop}[theorem]{Proposition}
\newtheorem{definition}[theorem]{Definition}
\newtheorem{example}[theorem]{Example}
\newtheorem{lemma}[theorem]{Lemma}
\newtheorem{remark}[theorem]{Remark}
\newtheorem{notation}[theorem]{Notations}
\numberwithin{equation}{section}
\DeclareMathOperator{\Hom}{Hom}
\DeclareMathOperator{\Spec}{Spec}
\newcommand{\real}{\mathbb{R}}
\newcommand{\comp}{\mathbb{C}}
\newcommand{\inte}{\mathbb{Z}}
\newcommand{\pd}{\partial}
\newcommand{\pdb}{\bar{\partial}}
\newcommand{\dd}[1]{\frac{\partial}{\partial #1}}
\newcommand{\half}{\frac{1}{2}}
\newcommand{\pdpd}[3]{\frac{\partial^2 #1}{\partial #2\partial #3}}
\newcommand{\Hess}{\nabla^2}
\newcommand{\morse}{Morse(M)}
\newcommand{\mtree}{\tilde{T}}
\newcommand{\dist}{\rho}
\newcommand{\morprod}[1]{m^{Morse}_{#1}}
\newcommand{\deprod}[1]{m_{#1}(\lam)}
\newcommand{\deprodtree}[2]{m_{#1}^{#2}(\lam)}
\newcommand{\dr}[1]{DR_\lam (#1)}
\newcommand{\adr}[1]{DR_{\lam} (#1)_{sm}}
\newcommand{\bmc}{w}
\newcommand{\lam}{\lambda}
\newcommand{\bvd}{\Delta}
\newcommand{\facc}{\check{\varphi}}
\newcommand{\facs}{\varphi}
\newcommand{\ang}{\theta}
\newcommand{\incoming}{\check{\Pi}}
\newcommand{\onewall}{\check{\Pi}}
\begin{document}

\title[SYZ Mirror Symmetry from Witten-Morse theory]{SYZ Mirror Symmetry from Witten-Morse theory}

\author[Ma]{Ziming Nikolas Ma}
\address{National Taiwan University, No.1, Sec. 4, Roosevelt Road, Taipei, Taiwan 10617}
\email{ziming@math.ntu.edu.tw}

\maketitle
\begin{abstract}
	This is a survey article on the recent progress in understanding the Strominger-Yau-Zaslow (SYZ) mirror symmetry conjecture, especially on the effect of quantum corrections, via Witten-Morse theory using the program first depicted by Fukaya in \cite{fukaya05} to obtain an explicit relation between differential geometric operations, e.g. wedge product of differential forms and Lie-bracket of the Kodaira-Spencer complexes, with combinatorial structures, e.g. Morse $A_\infty$ structures and scattering diagrams. 
\end{abstract}

\section{Introduction}\label{sec:introduction}
The celebrated Strominger-Yau-Zaslow (SYZ) conjecture \cite{syz96} asserts that mirror symmetry is a {\em T-duality}, namely, a mirror pair of Calabi-Yau manifolds should admit fibre-wise dual (special) Lagrangian torus fibrations to the same base. This suggests the following construction of a mirror (as a complex manifold):
Given a Calabi-Yau manifold $X$ (regarded as a symplectic manifold) equipped with a Lagrangian torus fibration
$$
\xymatrix@1{ (X,\omega,J) \ar[rr]_{p} &  &B \ar@(ul,ur)[ll]^{s}
}
$$
that admits a Lagrangian section $s$. The base $B$ is then an integral affine manifold with singularities, meaning that the smooth locus $B_0 \subset B$ (whose complement $B^{sing} := B \setminus B_0$, i.e. the singular locus, is of real codimension at least 1) carries an integral affine structure.  Restricting $p$ to the smooth locus $B_0 = B \setminus B^{sing}$, we obtain a semi-flat symplectic Calabi-Yau manifold
$$X_0 \hookrightarrow X$$
which, by Duistermaat's action-angle coordinates \cite{Duistermaat80}, can be identified as
$$ X_0 \cong T^*B_0 / \Lambda^*, $$
where $\Lambda^* \subset T^*B_0$ is the natural lattice (dual to $\Lambda$) locally generated by affine coordinate 1-forms. We can construct a pair of torus bundles over the same base which are fibre-wise dual
$$
\xymatrix{
X_0 \ar[dr]_{p} & & \check{X}_0 \ar[dl]^{\check{p}}\\
%&   & \\
& B_0 &,}
$$
where $\check{X}_0 \cong TB_0 / \Lambda$ can be equipped with a natural complex structure $\check{J}_0$ called the {\em semi-flat complex structure}. This, however, does not produce the correct mirror in general.\footnote{Except in the semi-flat case when $B = B_0$, i.e. when there are no singular fibers; see \cite{Leung05}.} The problem is that $\check{J}_0$ cannot be extended across the singular points $B^{sing}$, so this does not give a complex manifold which fibres over $B$. Here comes a key idea in the SYZ proposal --
we need to deform the complex structure using some specific information on the other side of the mirror, namely, {\em instanton corrections} coming from holomorphic disks in $X$ with boundary on the Lagrangian torus fibers of $p$.

The precise mechanism of how such a procedure may work was first depicted by Fukaya in his program \cite{fukaya05}. He described how instanton corrections would arise near the large volume limit given by scaling of a symplectic structure on $X$, which is mirrored to a scaling of the complex structure $\check{J}_0$ on $\check{X}_0$. He conjectured that the desired deformations of $\check{J}_0$ could be described as a special type of solutions to the Maurer-Cartan equation in the Kodaira-Spencer deformation theory of complex structures on $\check{X}_0$, whose expansions into Fourier modes along the torus fibers of $\check{p}$ would have semi-classical limits (i.e. leading order terms in asymptotic expansions) concentrated along certain gradient flow trees of a multi-valued Morse function on $B_0$. He also conjectured that holomorphic disks in $X$ with boundary on fibers of $p$ would collapse to such gradient flow trees emanating from the singular points $B^{sing} \subset B$. Unfortunately, his arguments were only heuristical and the analysis involved to make them precise seemed intractable at that time.

Fukaya's ideas were later exploited in the works of Kontsevich-Soibelman \cite{kontsevich-soibelman04} (for dimension 2) and Gross-Siebert \cite{gross2011real} (for general dimensions), in which families of rigid analytic spaces and formal schemes, respectively, are reconstructed from integral affine manifolds with singularities; this solves the very important {\em reconstruction problem} in SYZ mirror symmetry. These authors cleverly got around the analytical difficulty, and instead of solving the Maurer-Cartan equation, they used gradient flow trees \cite{kontsevich-soibelman04} or tropical trees \cite{gross2011real} to encode modified gluing maps between charts in constructing the Calabi-Yau families. This leads to the notion of {\em scattering diagrams}, which are combinatorial structures that encode possibly very complicated gluing data for building mirrors, and it has since been understood (by works of these authors and their collaborators, notably \cite{gross2010tropical}) that the relevant scattering diagrams encode Gromov-Witten data on the mirror side.

\begin{remark}
The idea that there should be Fourier-type transforms responsible for the interchange between symplectic-geometric data on one side and complex-geometric data on the mirror side (T-duality) underlines the original SYZ proposal \cite{syz96}. This has been applied successfully to the study of
mirror symmetry for compact toric manifolds
\cite{Abouzaid06, Abouzaid09, chanleung10,chanleung08, cho05, Cho-Oh06,Fang08, FLTZ12,FOOO-toricI, FOOO-toricII, FOOO-toricIII,Hori-Vafa00, kontsevich00}
and toric Calabi-Yau manifolds (and orbifolds)
\cite{AAK12, Auroux07, Auroux09,CCLT13, cll12,Goldstein01, Gross01, Gross-Inventiones,  Gross-Siebert_ICM, HIV00,  Lau14, Leung-Vafa98}.
Nevertheless, there is no scattering phenomenon in those cases.
\end{remark}

In this paper, we will review the previous work in \cite{klchan-leung-ma, kwchan-leung-ma} which carry out some of the key steps in Fukaya's program. 

\subsection{Witten deformation of wedge product}

In \cite{klchan-leung-ma}, we prove Fukaya's conjecture relating Witten's deformation of wedge product on $\Omega^*(B_0)$ with the Morse $A_\infty$ structure $m_{k}^{Morse}$'s defined by counting gradient trees on $B_0$, when $B_0$ is compact (i.e. $B_{sing} = \emptyset$). As proposed in \cite{fukaya05}, we introduce $\lam>0$ and consider $A_\infty$ operations $m_k(\lam)$'s obtained from applying homological perturbation to wedge product on $\Omega^*(B_0)$ using Witten twisted Green's operator, acting on eigenforms of Witten Laplacian in $\Omega^*(B_0)$. Letting $\lam \rightarrow \infty$, we obtain an semi-classical expansion relating two sets of operations.
 
More precisely, we consider the deRham dg-category $\dr{B_0}$, whose objects are taken to be smooth functions $f_i$'s on $B_0$. The corresponding morphism complex $Hom(f_i,f_j)$ is given by the twisted complex $\Omega_{ij}^*(M,\lam) = (\Omega^*(M) , d_{ij} := e^{-\lam f_{ij}} ( d )e^{\lam f_{ij}})$ where $f_{ij} = f_j-f_i$. The product is taken to be the wedge product on differential forms. On the other hand, there is an $A_\infty$ pre-category $Morse(M)$ with the same set of objects as $\dr{B_0}$. The morphism from $f_i$ to $f_j$ is the Morse complex $MC^*_{ij}$ with respect to the function $f_j-f_i$. The $A_\infty$ structure map $\{m_k^{Morse}\}_{k \in \inte_+}$ are defined by counting gradient flow trees which are described in \cite{Abouzaid06,fukayamorse}.\\

Fixing two objects $f_i$ and $f_j$, Witten's observation in \cite{witten82} suggests us to look at the Laplacian corresponds to $d_{ij}$, which is the Witten Laplacian $\Delta_{ij}$. The subcomplex formed by the eigen-subspace $\Omega_{ij}^*(M,\lam)_{sm}$ corresponding to small eigenvalues laying in $[0,1)$ is isomorphic to the Morse complex $MC^*_{ij}$ via the map 
\begin{equation}\label{wittenmap}
\phi_{ij}(\lam) : MC^*_{ij} \rightarrow \Omega_{ij}^*(M,\lam)_{sm}
\end{equation}
defined in \cite{HelSj4}. We learn from \cite{HelSj4,witten82} that $\phi_{ij}(\lam)(q)$ will be an eigenform concentrating at critical point $q$ as $\lam \rightarrow \infty$. We first observe that $\Omega_{ij}^*(M,\lam)_{sm}$ is a homotopy retract of the full complex $\Omega_{ij}^*(M,\lam)$ under explicit homotopy involving Green's operator. This allows us to pull back the wedge product in $\dr{B_0}$ via the homotopy, making use of homological perturbation lemma in \cite{kontsevich00}, to give a deformed $A_{\infty}$ category $\adr{B_0}$ with $A_\infty$ structure $\{m_k(\lam)\}_{k\in \inte_+}$. Then we have the theorem.

\begin{theorem}[\cite{klchan-leung-ma} Chan, Leung and Ma]\label{theoremclm}
For generic sequence of functions $\vec{f} = (f_0,\dots,f_k)$, with corresponding sequence of critical points $\vec{q} = ( q_{01}, q_{12} ,\dots , q_{(k-1)k})$, we have 
\begin{equation}
m_k(\lam)(\phi(\vec{q})) = e^{-\lam A(\vec{q})}(\phi(m_k^{Morse}(\vec{q}) )+ \mathcal{O}(\lam^{-1/2})),
\end{equation} 
where $A(\vec{q}) = f_{0k}(q_{0k})-f_{01}(q_{01}) - \dots -f_{(k-1)k}(q_{(k-1)k})$.
\end{theorem}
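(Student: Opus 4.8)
The plan is to compute both sides asymptotically as $\lam \to \infty$ by tracing through the homological perturbation lemma and matching each tree contribution to $m_k(\lam)$ with the corresponding gradient flow tree contribution to $m_k^{Morse}$. First I would recall the explicit formula produced by homological perturbation: $m_k(\lam)(\phi(\vec q))$ is a sum over planar trees $T$ with $k$ inputs of iterated expressions alternating wedge products $\wedge$ and the homotopy operator $H_\lam = d^* G_\lam$ (Witten-twisted Green's operator composed with codifferential), with the eigenform inputs $\phi_{i(i+1)}(\lam)(q_{i(i+1)})$ at the leaves and the projection $P_\lam$ onto the small-eigenvalue subspace at the root. Since each $\phi_{ij}(\lam)(q)$ concentrates (in the Helffer--Sjöstrand sense) near the critical point $q$ with Gaussian-type decay at rate $\lam^{1/2}$ away from $q$, and the twisting weights $e^{-\lam f_{ij}}$ carry the exponential factors, the leading asymptotics of each tree integral should localize near a configuration of gradient trajectories — precisely a gradient flow tree in the sense of \cite{Abouzaid06, fukayamorse}.

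The key steps, in order: (1) Set up the tree expansion for $m_k(\lam)$ and isolate, for each tree $T$, the iterated integral over $B_0$ of products of eigenforms propagated by $H_\lam$. (2) Establish the needed semiclassical analysis of $H_\lam$ applied to a concentrating eigenform: show that $H_\lam(\phi_{i(i+1)}(\lam)(q_i) \wedge \phi_{(i+1)(i+2)}(\lam)(q_{i+1}))$ is, to leading order, an eigenform-type object concentrated along the unstable manifold / gradient segment flowing out of the ``fusion'' of the two critical points, with an explicit exponential weight governed by the relevant $f_{ij}$ values — this is an adaptation of the stationary-phase/WKB estimates for the Witten Laplacian in \cite{HelSj4, witten82}. (3) Iterate up the tree: by induction on the number of internal edges, each partial propagation produces a form concentrating along a partial gradient tree, with the accumulated exponential weight matching a partial sum of the $f$-values. (4) At the root, apply $P_\lam = \sum_q \phi(q)\langle \psi(q), \cdot\rangle$ and pair against the dual eigenform; by the concentration properties, the pairing picks out exactly the endpoint critical point $q_{0k}$ and the integral reduces, via stationary phase, to a sum over rigid gradient flow trees — each contributing $\pm 1$ times $e^{-\lam A(\vec q)}$ up to $\mathcal O(\lam^{-1/2})$, with the signs matching the orientation conventions defining $m_k^{Morse}$. (5) Identify the total exponent: the weights $e^{-\lam f_{ij}}$ at the leaves contribute $f_{01}(q_{01}) + \cdots + f_{(k-1)k}(q_{(k-1)k})$ evaluated at the leaf critical points, while the output eigenform $\phi_{0k}(\lam)(q_{0k})$ carries $e^{+\lam f_{0k}(q_{0k})}$ after stripping the twist, and the net result is the stated $-\lam A(\vec q)$; the genericity of $\vec f$ ensures all gradient flow trees are transverse and isolated, so no degenerate trees contribute and the $\mathcal O(\lam^{-1/2})$ error is uniform.

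The main obstacle I anticipate is step (2)–(3): controlling the action of the Witten-twisted Green's operator on products of concentrating eigenforms with sufficient uniformity to extract the leading term and bound the remainder by $\mathcal O(\lam^{-1/2})$. The Green's operator is nonlocal, so one must show that its kernel, paired against a source concentrated near a point (or along a lower-dimensional stratum), produces output that is again concentrated along the outgoing gradient flow line — this requires careful microlocal estimates and a good understanding of the off-diagonal decay of the Witten heat kernel, together with a mechanism (e.g. integration by parts exploiting $d_{ij}$-closedness of the inputs away from critical points, or an Agmon-distance estimate) to see that the only non-negligible contributions come from genuine broken trajectories. A secondary technical point is handling the ``generic'' condition on $\vec f$ precisely — ruling out gradient trees that fail to be transverse and ensuring the count $m_k^{Morse}(\vec q)$ is well-defined — but this is expected to follow from standard Sard–Smale arguments as in \cite{Abouzaid06, fukayamorse}.
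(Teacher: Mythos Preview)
Your proposal is correct and aligns with the approach the paper indicates: the paper is a survey and does not give a full proof here, but its sketch (tree expansion via homological perturbation, Helffer--Sj\"ostrand/Agmon concentration of the eigenforms $\phi_{ij}(q)$, and the identification of the main difficulty as controlling the homotopy operator $H_{ij}=d_{ij}^*G_{ij}$ along flow lines) matches your steps (1)--(4) and your diagnosis of the obstacle in (2)--(3). The one sharpening the paper adds is that the Green's operator analysis is recast as the inhomogeneous Witten Laplacian equation $\Delta_{ij}\zeta_E = d_{ij}^*(I-P_{ij})(e^{-\lam\psi_S}\nu)$ along a gradient flow line $\gamma$, with the crux being to guess and then verify the precise exponential decay of $\zeta_E$ along $\gamma$; this is exactly the mechanism you gesture at with ``Agmon-distance estimate,'' and the full details are deferred to \cite{klchan-leung-ma}.
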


When $k=1$, it is Witten's observation proven in \cite{HelSj4}, involving detail estimate of operator $d_{ij}$ along flow lines. For $k \geq 3$, it involves the study for "inverse" of $d_{ij}$, which is the local behaviour of the inhomogeneous Witten Laplacian equation of the form
\begin{equation}\label{wittenequation}
\Delta_{ij} \zeta_E = d^*_{ij}(I-P_{ij}) (e^{-\frac{\psi_S}{\lam^{-1}}} \nu)
\end{equation}
along a flow line $\gamma$ of $f_j-f_i$, where $d_{ij}^*$ is the adjoint to $d_{ij}$ and $P_{ij}$ is the projection to $\Omega_{ij}^*(B_0,\lam)_{sm}$. The difficulties come from guessing the precise exponential decay for the solution $\zeta_E$ along $\gamma$. 

\subsection{Scattering diagram from Maurer-Cartan equation}

With the hint from the above result relating differential geometric operations $m_k(\lam)$'s and Morse theoretic operations $m_k^{Morse}$'s through semi-classical analysis, we study the relation between scattering diagram (which records the data of gradient flow trees on $B_0$ using combinatorics) and the differential geometric equation governing deformation of complex structure, namely the Maurer-Cartan equation (abbrev. MC equation)
\begin{equation}\label{intro:MCequation}
\bar{\partial} \check{\varphi} + \half [\check{\varphi},\check{\varphi}] = 0,
\end{equation}
using semi-classical analysis.

\begin{remark}
It is related to the previous section in the following way as suggested by Fukaya \cite{fukaya05}. First, we shall look at the Fourier expansion of the Kodaira-Spencer differential graded Lie algebra (dgLa) $(KS_{\check{X}_0},\bar{\partial},[\cdot,\cdot])$ of $\check{X}_0$ which leads to a dgLa defined as follows.
First of all, let $\mathcal{L}X_0$ be the space of fiberwise geodesic loops of the torus bundle $p : X_0 \rightarrow B_0$, which can be identified with the space $X_0 \times_{B_0} \Lambda^*$. We consider the complex
$$
L_{X_0} = \Omega^*_{cf}(\mathcal{L}X_0)
$$
where the subscript $cf$ refers to differential forms which constant along the fiber of $p \times_{B_0} id:\mathcal{L}X_0 \rightarrow \Lambda^*$. It is equipped with the Witten differential locally defined by
$d_W = e^{-f} d e^{f}$,
where $f$ is the symplectic area function on $\mathcal{L}X_0$ (or, as in \cite{fukaya05}, can be treated as a multi-valued function on $B_0$) whose gradient flow records the loops that may shrink to a singular point in $B$ and hence bound a holomorphic disk in $X$.
Together with a natural Lie bracket $\{\cdot,\cdot\}$ (defined by taking Fourier transform) the triple 
$(L_{X_0}, d_W, \{\cdot,\cdot\})$ defines a dgLa. Since Fourier transform is an isomorphism, the MC equation in $L_{X_0}$ is equivalent to the MC equation in $KS_{\check{X}_0}$, while working in $L_{X_0}$ will give us clearer picture relating to Morse theory. 
\end{remark}

We are going to work locally away from the singular locus, so we will assume that $B_0 = \mathbb{R}^n$ in the rest of this introduction. In this case, a scattering diagram can be viewed schematically as the process of how new walls are being created from two non-parallel walls supported on tropical hypersurface in $B_0$ which intersect transversally. The combinatorics of this process is controlled by the algebra of the tropical vertex group \cite{kontsevich-soibelman04, gross2010tropical}.

We first deal with a single wall and see how the associated wall-crossing factor is related to solutions of the Maurer-Cartan equation. Suppose that we are given a wall $\mathbf{w}$ supported on a tropical hypersurface $P \subset B_0$ and equipped with a wall-crossing factor $\Theta$ (which is an element in the tropical vertex group). % (regarded as an input datum for the scattering diagram).
In view of Witten-Morse theory described in \cite{klchan-leung-ma,HelSj4, witten82}, the shrinking of a fibre-wise loop $m \in \pi_1(p^{-1}(x),s(x))$ towards a singular fibre indicates the presence of a critical point of $f$ in the singular locus (in $B$), and the union of gradient flow lines emanating from the singular locus should be interpreted as a stable submanifold associated to the critical point. Furthermore, this codimension one stable submanifold should correspond to a differential $1$-form concentrating on $\mathbf{w}$ (see \cite{klchan-leung-ma}).
Inspired by this, associated to the wall $\mathbf{w} = (P, \Theta)$, we write down an ansatz
$$\incoming_{\mathbf{w}} \in KS_{\check{X}_0}^1[[t]]$$
solving Equation \eqref{intro:MCequation}; see Section \ref{sec:ansatz_one_wall} for the precise formula.\footnote{In fact, both terms on the left-hand side of the Maurer-Cartan equation \eqref{intro:MCequation} are zero for this solution.}

Since $\check{X}_0 \cong (\mathbb{C}^*)^n$ does not admit any non-trivial deformations, the Maurer-Cartan solution $\incoming_{\mathbf{w}}$ is gauge equivalent to $0$, i.e. there exists $\facc \in KS_{\check{X}_0}^0[[t]]$ such that $$e^{\facc} * 0 = \incoming_{\mathbf{w}};$$
we further use a gauge fixing condition ($\check{P}\facc = 0$) to uniquely determine $\facc$.
By applying asymptotic analysis, we then show how the semi-classical limit (as $\lam^{-1} \rightarrow 0$) of this gauge $\facc$ recovers the wall-crossing factor $\Theta$ (or more precisely, $Log(\Theta)$) in Proposition \ref{prop:MC_sol_one_wall}; the details can be found in Section \ref{onewall}.

The heart of the work in \cite{kwchan-leung-ma} will be the situation when two non-parallel walls $\mathbf{w}_1, \mathbf{w}_2$, equipped with wall crossing factors $\Theta_1, \Theta_2$ and supported on tropical hypersurface $P_1, P_2 \subset B_0 = \mathbb{R}^n$ respectively, intersect transversally.\footnote{It suffices to consider only two walls intersecting because consistency of a scattering diagram is a codimension $2$ phenomenon and generic intersection of more than two walls will be of higher codimension. Non generic intersection of walls can be avoided by choosing a generic metric on the base $B_0$ because walls which should be regarded as stable submanifolds emanating from critical points of the area functional $f_m$, as suggested in Fukaya's original proposal \cite{fukaya05}.}
In this case, the sum
$$\incoming := \incoming_{\mathbf{w}_1} + \incoming_{\mathbf{w}_2} \in KS_{\check{X}_0}^1[[t]]$$
does {\em not} solve the Maurer-Cartan equation \eqref{intro:MCequation}.
But a method of Kuranishi \cite{Kuranishi65} allows us to, after fixing the gauge using an explicit homotopy operator (introduced in Definition \ref{pathspacehomotopy}), write down a Maurer-Cartan solution
$$\Phi = \incoming + \cdots$$
as a sum over directed trivalent planar trees (see Equation \ref{solve_sum_over_trees}) with input $\incoming$.

This Maurer-Cartan solution $\Phi$ has a natural decomposition of the form
\begin{equation}\label{eqn:Phi_decomposition}
\Phi = \incoming + \sum_{a} \Phi^{(a)},
\end{equation}
where the sum is over $a = (a_1, a_2) \in \left(\mathbb{Z}_{>0}^2\right)_{prim}$ which parametrizes half planes $P_a$ with rational slopes lying in-between $P_1$ and $P_2$. Our first result already indicates why such a solution should be related to scattering diagrams:
\begin{lemma}\label{theorem_support}
For each $a \in \left(\mathbb{Z}_{>0}^2\right)_{prim}$, the summand $\Phi^{(a)}$ is a solution of the Maurer-Cartan equation \eqref{intro:MCequation} which is supported near premiage of the half plane $\check{p}^{-1}(P_a)$.
\end{lemma}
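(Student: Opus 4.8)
The plan is to extract both assertions directly from the explicit tree expansion \eqref{solve_sum_over_trees} of $\Phi$. Write $\Phi = \incoming + \sum_T w_T\, T(\incoming,\dots,\incoming)$, where $T$ runs over directed trivalent planar trees, each leaf is decorated by the input $\incoming$, each trivalent vertex by the bracket $[\cdot,\cdot]$ of $KS_{\check{X}_0}$, and each internal edge (and the outgoing root edge) by the homotopy operator $H$ of Definition \ref{pathspacehomotopy}. First I substitute $\incoming = \incoming_{\mathbf{w}_1} + \incoming_{\mathbf{w}_2}$ at every leaf, turning the sum into a sum over trees whose leaves are individually colored by $1$ or $2$. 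A colored tree in which the leaves contribute total Fourier momentum proportional to $a_1 m_1 + a_2 m_2$ for a unique primitive $a = (a_1,a_2) \in \left(\mathbb{Z}_{>0}^2\right)_{prim}$ is, by definition, one of the trees making up $\Phi^{(a)}$, and this recovers the decomposition \eqref{eqn:Phi_decomposition}; the goal is then to prove the two claims tree-class by tree-class.

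For the support statement I induct on the size of the colored tree, using two elementary properties of the operations sitting at its vertices and edges: (i) the bracket satisfies $\supp[\alpha,\beta]\subseteq\supp\alpha\cap\supp\beta$ and is additive in Fourier momentum; and (ii) the homotopy operator $H$ of Definition \ref{pathspacehomotopy} preserves Fourier momentum and, applied to a form supported near $\check{p}^{-1}(Q)$ carrying momentum $m$, produces a form supported near $\check{p}^{-1}$ of the forward flow-out of $Q$ in the direction $m$ (it is built by integrating along the straight-line gradient flow of the linear area function dual to $m$). Starting from the base case, where $\incoming_{\mathbf{w}_i}$ is supported near $\check{p}^{-1}(P_i)$ with momenta in $\mathbb{Z}_{>0}m_i$: whenever an internal vertex brackets two subtrees whose outputs lie near two distinct half-planes emanating from $P_1 \cap P_2$, property (i) forces the result to be supported near $\check{p}^{-1}(P_1 \cap P_2)$, and property (ii) then pushes that support out along the new momentum direction, i.e.\ into a neighbourhood of the half-plane $P_b$ attached to that subtree's total momentum; the root edge performs this once more with the total momentum $a_1 m_1 + a_2 m_2$, so the whole tree is supported near $\check{p}^{-1}(P_a)$. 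Summing over all colored trees of a fixed momentum direction $a$ gives that $\Phi^{(a)}$ is supported near $\check{p}^{-1}(P_a)$. This is precisely the mechanism by which the differential-geometric construction reproduces the combinatorics of a scattering diagram; the only nontrivial input is property (ii), which reduces to an elementary computation once Definition \ref{pathspacehomotopy} is unwound.

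For the Maurer--Cartan statement I use that $\bar{\partial}$ and $H$ are momentum-homogeneous of degree $0$ while $[\cdot,\cdot]$ is additive in momentum, so both a Kuranishi-type fixed point relation of the form $\Phi = \incoming - \tfrac12 H[\Phi,\Phi]$ and the resulting identity $\bar{\partial}\Phi + \tfrac12[\Phi,\Phi] = 0$ decompose along Fourier momenta, hence along momentum directions. Projecting $\bar{\partial}\Phi + \tfrac12[\Phi,\Phi] = 0$ onto the ray $\mathbb{R}_{>0}(a_1 m_1 + a_2 m_2)$ yields $\bar{\partial}\Phi^{(a)} + \tfrac12[\Phi^{(a)},\Phi^{(a)}]$ together with finitely many extra bracket terms $[\Phi^{(b)},\Phi^{(c)}]$ with $b\neq c$ (possibly involving $\incoming_{\mathbf{w}_1}$ or $\incoming_{\mathbf{w}_2}$), and by the support analysis above any two of the walls $P_b, P_c, P_1, P_2$ meet only along $P_1 \cap P_2$, so each such extra term is supported in an arbitrarily small neighbourhood of the codimension-two locus $\check{p}^{-1}(P_1 \cap P_2)$. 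This already gives $\bar{\partial}\Phi^{(a)} + \tfrac12[\Phi^{(a)},\Phi^{(a)}] = 0$ on the complement of any neighbourhood of $\check{p}^{-1}(P_1 \cap P_2)$; upgrading this to all of $\check{X}_0$ requires controlling $\Phi^{(a)}$ and the residual cross terms along the collision locus itself. The hard part is exactly this last step: along $\check{p}^{-1}(P_1 \cap P_2)$ all of the walls overlap and the naive support bookkeeping degenerates, so one has to invoke the quantitative asymptotic estimates (as $\lam^{-1}\to 0$) for the tree integrals, pinning down the precise decay of each $\Phi^{(a)}$ and of the cross terms near $P_1 \cap P_2$ — the analogue here of the delicate exponential-decay estimates that already appear in the $k\geq 3$ case of Theorem \ref{theoremclm}.
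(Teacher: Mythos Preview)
Your approach via the tree expansion and Fourier-momentum bookkeeping is essentially the one the paper has in mind (see Section~\ref{sec:main_results_higher_dim}), and both your support argument and your derivation of $\bar\partial\Phi^{(a)}+\tfrac12[\Phi^{(a)},\Phi^{(a)}]=0$ away from $\check{p}^{-1}(P_1\cap P_2)$ are correct.

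The problem is your final paragraph. You identify as ``the hard part'' the task of extending the Maurer--Cartan equation for $\Phi^{(a)}$ across the collision locus $\check{p}^{-1}(P_1\cap P_2)$, and propose to do this via quantitative $\lam^{-1}$-asymptotics of the tree integrals. This step is neither required nor part of the actual claim. The precise version of the lemma, spelled out in Section~\ref{sec:main_results_higher_dim}, works modulo $\mathbf{m}^{N_0+1}$ for each fixed $N_0$ and \emph{removes} a closed ball $\overline{B(r_{N_0})}$ around the origin: the decomposition $\Phi=\incoming+\sum_a\Phi^{(a)}$ and the Maurer--Cartan property of each $\Phi^{(a)}$ are only asserted on the annulus $A_{r_{N_0}}=B_0\setminus\overline{B(r_{N_0})}$, inside the neighbourhoods $W_a$ of Figure~\ref{fig:supportlemma}. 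On that annulus the supports of the various $\Phi^{(a)}$ and of $\incoming_{\mathbf{w}_1},\incoming_{\mathbf{w}_2}$ are genuinely disjoint, so every cross term $[\Phi^{(b)},\Phi^{(c)}]$ with $b\neq c$ vanishes identically there, and your momentum-projection argument already finishes the proof. No asymptotic analysis enters this lemma; the $\lam$-asymptotics are used only later, in Theorem~\ref{theorem_asymptotic_expansion}, to read off the wall-crossing factors $\Theta_a$ from the gauges $\facc_a$. Restricting to the annulus is harmless because the monodromy in Theorem~\ref{theorem2} is computed along a loop lying entirely in $A_{r_{N_0}}$.
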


%See Lemma \ref{supportlemma} for the precise statement.

Again, as $\check{X}_0 \cong (\mathbb{C}^*)^n$ has no non-trivial deformations, each $\Phi^{(a)}$ is gauge equivalent to $0$ in a neigbrohood of $\check{p}^{-1}(P_a)$. So there exists a unique $\facc_a \in KS_{\check{X}_0}^0[[t]]$ satisfying
$$e^{\facc_a} * 0 = \Phi^{(a)}$$
and the gauge fixing condition $\check{P}\facc_a = 0$ in that neigbrohood. We analyze the gauge $\facc_a$, again by asymptotic analysis and a careful estimate on the orders of the parameter $\lam^{-1}$ in its asymptotic expansion, and obtain the following theorem:
\begin{theorem}\label{theorem_asymptotic_expansion}
The asymptotic expansion of the gauge $\facc_a$ is given by
$$
\facc_a = \facc_{a,0} + \mathcal{O}(\lam^{-1/2}),
$$
where $\facc_{a,0}$, the semi-classical limit of $\facc_a$, is a step function that jumps across the preimage of the half plane $\check{p}^{-1}(P_a)$ by $Log(\Theta_a)$ for some element $\Theta_a$ in the tropical vertex group.
\end{theorem}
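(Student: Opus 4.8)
The plan is to make the gauge $\facc_a$ completely explicit, feed in the tree description of $\Phi^{(a)}$, and then run a semi-classical order count in powers of $\lam^{-1/2}$, using the one-wall analysis of Section~\ref{sec:ansatz_one_wall} and Proposition~\ref{prop:MC_sol_one_wall} to identify the leading term. First I would unwind the gauge action in the Kodaira--Spencer dgLa: $e^{\facc_a}*0 = -\tfrac{e^{\mathrm{ad}_{\facc_a}}-1}{\mathrm{ad}_{\facc_a}}(\bar\partial\facc_a) = -\bar\partial\facc_a - \tfrac12[\facc_a,\bar\partial\facc_a] - \cdots$, so that, in a neighbourhood of $\check{p}^{-1}(P_a)$, $\bar\partial\facc_a = -\Phi^{(a)} - \tfrac12[\facc_a,\bar\partial\facc_a] - \cdots$. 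Applying the path-space homotopy operator $H$ of Definition~\ref{pathspacehomotopy}, using $\bar\partial H + H\bar\partial = \mathrm{id} - \check{P}$, the gauge-fixing condition $\check{P}\facc_a = 0$, and the vanishing of non-trivial deformations of $\check{X}_0\cong(\comp^*)^n$, one solves this order by order in $t$ and obtains $\facc_a = -H\Phi^{(a)} + (\text{iterated }H\text{'s and Lie brackets of }\Phi^{(a)})$. Combining this with the tree expansion in Equation~\ref{solve_sum_over_trees} for $\Phi^{(a)}$, and with the decomposition in Equation~\ref{eqn:Phi_decomposition}, exhibits $\facc_a$ as a sum over directed trivalent planar trees whose leaves carry the one-wall ansätze $\incoming_{\mathbf{w}_1},\incoming_{\mathbf{w}_2}$, whose internal edges and root carry $H$, and whose vertices carry the bracket.

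Next I would run the asymptotics tree by tree. Each leaf $\incoming_{\mathbf{w}_i}$ carries a Gaussian bump of transverse width $\sim\lam^{-1/2}$ along $P_i$, exactly as in the ansatz of Section~\ref{sec:ansatz_one_wall}; each bracket multiplies such bumps, and each application of $H$ integrates along the relevant gradient trajectory. A stationary-phase / Laplace estimate along that trajectory should then show that a tree of a given combinatorial type contributes at a definite order $\lam^{-N/2}$, with a concentration transverse to the outgoing half-plane $P_a$, and --- crucially --- that at each order in $t$ there is a unique contribution of minimal order. By Lemma~\ref{theorem_support} the surviving object is supported near $\check{p}^{-1}(P_a)$; as $\lam^{-1}\to 0$ it converges to a $0$-form which is a step function jumping across $\check{p}^{-1}(P_a)$, the jump being the transverse integral of the leading concentrated $1$-form. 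Matching that $1$-form with a one-wall ansatz $\incoming_{\mathbf{w}_a}$ and invoking Proposition~\ref{prop:MC_sol_one_wall} --- equivalently, reading off the Baker--Campbell--Hausdorff expansion that the tree sum reproduces --- identifies this jump as $Log(\Theta_a)$ for a well-defined element $\Theta_a$ of the tropical vertex group, so that $\facc_{a,0}$ is precisely this step function.

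Finally I would control the remainder. All non-minimal trees, together with all terms in the gauge expansion beyond $-H\Phi^{(a)}$, are $\mathcal{O}(\lam^{-1/2})$ relative to $\facc_{a,0}$ by the same order count; since each power of $t$ is a finite sum of such contributions, the $t$-adic series converges and one concludes $\facc_a = \facc_{a,0} + \mathcal{O}(\lam^{-1/2})$.

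The main obstacle is the second step: obtaining the precise exponential-decay and $\lam^{-1/2}$-order estimates for the iterated compositions $H\circ[\cdot,\cdot]\circ H\circ\cdots$, i.e. controlling the solutions of the inhomogeneous Witten-Laplacian-type equations along flow lines that the homotopy $H$ produces --- the same analytic difficulty flagged after Theorem~\ref{theoremclm} in connection with Equation~\ref{wittenequation}, now iterated over an infinite family of trees. One must pin down the exponents so that the leading contribution is genuinely of order $\lam^{0}$ and a step function while everything else is $\mathcal{O}(\lam^{-1/2})$, and simultaneously verify that the combinatorics surviving the limit reproduces the tropical vertex group product faithfully, rather than an over- or under-counted version of it.
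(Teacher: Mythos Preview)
The paper does not actually prove this theorem. It is explicitly a survey article, and immediately after stating Lemma~\ref{theorem_support} and Theorem~\ref{theorem_asymptotic_expansion} it says: ``the proofs of these can be found in \cite{kwchan-leung-ma}.'' The discussion in Section~\ref{sec:main_results_higher_dim} gives only the set-up (the decomposition $\Phi=\incoming+\sum_a\Phi^{(a)}$ in the annulus $A_{r_{N_0}}$, the local gauge condition $\check{P}_a\facc_a=0$ with base point $q_a\in W_a^-$, and the asserted shape of the asymptotic expansion) and then states the conclusion. So there is no in-paper proof to compare against; the comparison below is to that outline and to the one-wall model in Section~\ref{onewall}.

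Relative to that outline your plan is on target: expressing $\facc_a$ through the gauge equation and the homotopy, feeding in the tree expansion \eqref{solve_sum_over_trees} for $\Phi^{(a)}$, and running a Laplace/stationary-phase order count tree by tree is precisely the mechanism the survey is pointing at, and your identification of the leading term via the one-wall Proposition~\ref{prop:MC_sol_one_wall} is the right reduction. One point to tighten: the gauge $\facc_a$ is fixed not with the global operator $\check{H}$ of Definition~\ref{pathspacehomotopy} but with a \emph{local} projection $\check{P}_a$ attached to a base point $q_a\in W_a^-$ inside the neighbourhood $W_a$; this localisation is what makes the step-function interpretation clean and what lets you work modulo $\mathbf{m}^{N_0+1}$ in the annulus $A_{r_{N_0}}$. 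You should make that replacement explicit rather than invoking the global $H$.

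Your self-diagnosed obstacle is the genuine one, and as written the proposal is a strategy rather than a proof. The hard content of \cite{kwchan-leung-ma} is exactly the $\lam^{-1/2}$-order bookkeeping for iterates $H\circ[\cdot,\cdot]\circ H\circ\cdots$ applied to products of Gaussian bumps supported near transversal hyperplanes, together with the check that the surviving $\lam^0$ contribution lands in $\mathfrak{h}$ (not merely $\mathfrak{g}$) so that $\Theta_a$ really lies in the tropical vertex group. None of that is supplied here, and the survey does not supply it either; so your proposal correctly identifies what must be done but does not do it.
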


Thus, each $\Phi^{(a)}$, or more precisely the gauge $\facc_a$, determines a wall
$$\mathbf{w}_a = (P_a, \Theta_a)$$
supported on the half plane $P_a$ and equipped with the wall crossing factor $\Theta_a$.
Hence the decomposition \eqref{eqn:Phi_decomposition} of the Maurer-Cartan solution $\Phi$ defines a scattering diagram $\mathscr{D}(\Phi)$ consisting of the walls $\mathbf{w}_1$, $\mathbf{w}_2$ and $\mathbf{w}_a$, $a \in \left(\mathbb{Z}_{>0}^2\right)_{prim}$. We now arrive at our main result:
\begin{theorem}[=Theorem \ref{scatteringtheorem2}]\label{theorem2}
The scattering diagram $\mathscr{D}(\Phi)$ associated to $\Phi$ is monodromy free, meaning that we have the following identity
\begin{equation*}
\Theta_1^{-1}\Theta_2\left(\prod^{\rightharpoonup} \Theta_a \right) \Theta_1 \Theta_2^{-1} = Id,
\end{equation*}
where the left-hand side is the path oriented product along a loop around $P_1 \cap P_2$.\footnote{This identity can equivalently be written as a formula for the commutator of two elements in the tropical vertex group: $$\Theta_2^{-1}\Theta_1 \Theta_2 \Theta_1^{-1} =\left(\prod^{\rightharpoonup} \Theta_a \right).$$}
\end{theorem}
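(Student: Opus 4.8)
The plan is to read the desired identity as the statement that a single, globally defined gauge transformation trivializing $\Phi$ has trivial monodromy around the codimension-two locus $\mathfrak{j} := P_1 \cap P_2$, and to compute that monodromy chamber by chamber using the local descriptions of the individual wall gauges from Theorem \ref{theorem_asymptotic_expansion} and Proposition \ref{prop:MC_sol_one_wall}. I would work throughout modulo each fixed power of the formal parameter $t$, so that $\Theta_1, \Theta_2$ and the $\Theta_a$ are finite products of exponentials of nilpotent elements and all path-ordered products below are finite. First I would fix a small loop $\gamma \subset B_0 = \mathbb{R}^n$ around $\mathfrak{j}$ (bounding a $2$-disk meeting $\mathfrak{j}$ transversally in one point), chosen generically so that $\gamma$ meets each of the hyperplanes $P_1, P_2$ in exactly two points, each half-hyperplane $P_a$ in exactly one point, and no other walls. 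Then $\check{p}^{-1}(\gamma)$ is subdivided by the preimages of the walls into finitely many chambers, on the interior of each of which $\Phi$ vanishes identically: $\incoming_{\mathbf{w}_1}, \incoming_{\mathbf{w}_2}$ are supported near $\check{p}^{-1}(P_1), \check{p}^{-1}(P_2)$ and, by Lemma \ref{theorem_support}, each $\Phi^{(a)}$ is supported near $\check{p}^{-1}(P_a)$.

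Since $\check{X}_0 \cong (\mathbb{C}^*)^n$ carries no nontrivial deformations, $\Phi$ is gauge equivalent to $0$, and --- exactly as for the individual pieces --- the gauge fixing condition $\check{P}\facc = 0$ singles out a unique $\facc \in KS_{\check{X}_0}^0[[t]]$ with $e^{\facc} * 0 = \Phi$; the crucial point is that this $\facc$ is an honest single-valued object on $\check{X}_0$. On the interior of each chamber $e^{\facc} * 0 = 0$, so $\facc$ restricted there is $\bar{\partial}$-closed and its semiclassical limit is locally constant, while as $\gamma$ crosses a wall this semiclassical limit jumps. I would identify each jump using uniqueness of the gauge-fixed trivialization: near $\check{p}^{-1}(P_a)$ and away from the other walls one has $\Phi = \Phi^{(a)}$, so $\facc$ coincides there with $\facc_a$ up to a flat, semiclassically negligible correction, whence Theorem \ref{theorem_asymptotic_expansion} shows the jump across $\check{p}^{-1}(P_a)$ is $Log(\Theta_a)$; likewise near $\check{p}^{-1}(P_i)$ one has $\Phi = \incoming_{\mathbf{w}_i}$ and Proposition \ref{prop:MC_sol_one_wall} gives a jump by $Log(\Theta_i)$.

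I would then transport $\facc$ once around $\gamma$ from a base chamber. The successive jumps compose via the Baker--Campbell--Hausdorff product in the pro-nilpotent gauge group, so the total monodromy is the path-ordered group product of the wall-crossing elements in the cyclic order in which $\gamma$ meets the walls --- namely $\Theta_2^{-1}, \Theta_1$, then the $\Theta_a$ ordered by the slopes of the $P_a$, then $\Theta_2, \Theta_1^{-1}$, the full hyperplanes $P_1, P_2$ being crossed twice and each half-hyperplane $P_a$ once. Since $\facc$ is single-valued this monodromy is the identity; letting $\lambda^{-1} \to 0$, which kills the $\mathcal{O}(\lambda^{-1/2})$ errors while leaving the $\lambda$-independent identity in the tropical vertex group, gives exactly
\[
\Theta_1^{-1}\Theta_2\left(\prod^{\rightharpoonup} \Theta_a \right)\Theta_1\Theta_2^{-1} = Id .
\]

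The step I expect to be the main obstacle is the localization claim behind the jump computation: one must show that the globally gauge-fixed $\facc$, restricted to a neighborhood of a single wall, really reduces to the corresponding individual gauge up to a correction that vanishes in the semiclassical limit --- that is, that the mutual interaction of the walls does not contaminate the jump across any one of them. This requires a careful support analysis of the tree expansion $\Phi = \incoming + \cdots$ together with uniform control, away from $\mathfrak{j}$, of the asymptotics underlying Theorem \ref{theorem_asymptotic_expansion}; reconciling the additive decomposition \eqref{eqn:Phi_decomposition} of $\Phi$ with the multiplicative (Baker--Campbell--Hausdorff) composition of the gauges is the conceptual heart of the argument.
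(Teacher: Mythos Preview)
Your overall strategy is correct and matches the paper's route: the paper deduces Theorem~\ref{theorem2} from Theorem~\ref{theorem1} by verifying (via Lemma~\ref{theorem_support} and Theorem~\ref{theorem_asymptotic_expansion}) that the specific $\Phi$ has asymptotic support, and the content of Theorem~\ref{theorem1} is exactly the monodromy-of-a-global-gauge computation you outline.

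There is, however, one genuine slip. You write that near $\check{p}^{-1}(P_a)$ the global gauge $\facc$ ``coincides with $\facc_a$ up to a flat, semiclassically negligible correction''. This is false, and if it were true your argument would collapse: you would only be summing the additive jumps $Log(\Theta_a)$ around the loop, which gives $0$ trivially and says nothing about the group identity. The flat correction is \emph{not} negligible as $\lam\to\infty$; it is precisely the accumulated value of $\facc$ on the chamber just before the wall, and it is what forces the jumps to compose via Baker--Campbell--Hausdorff rather than additively. Concretely: near $P_a$ one has $e^{\facc}*0=\Phi^{(a)}=e^{\facc_a}*0$, so $\facc=\facc_a\bullet\sigma$ for some $\bar\partial$-closed $\sigma$. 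In the semiclassical limit $\sigma$ is a constant $c_i$ (the value of $\facc$ on the incoming chamber), so on $W_a^-$ one gets $e^{\facc}\to e^{c_i}$ while on $W_a^+$ one gets $e^{\facc}\to \Theta_a\cdot e^{c_i}$. Iterating around $\gamma$ yields $e^{c_0}=\big(\text{ordered product of }\Theta\text{'s}\big)\cdot e^{c_0}$, which is the desired identity.

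So the step you flag as the ``main obstacle'' is also where your exposition goes wrong: you must keep the flat piece $\sigma$ rather than discard it, and recognise that it is exactly this non-negligible flat piece that converts the local additive jumps of the $\facc_a$ into the global multiplicative monodromy. Once you phrase it this way, the reconciliation of the additive decomposition~\eqref{eqn:Phi_decomposition} with the BCH composition of gauges is automatic, and the remaining work is the support/asymptotic analysis already packaged in Lemma~\ref{theorem_support} and Theorem~\ref{theorem_asymptotic_expansion}.
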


\begin{remark}
Notice that the scattering diagram $\mathscr{D}(\Phi)$ is the unique (by passing to a minimal scattering diagram if necessary) monodromy free extension, determined by Kontsevich-Soibelman's Theorem \ref{KSscatteringtheorem}, of the standard scattering diagram consisting of the two walls $\mathbf{w}_1$ and $\mathbf{w}_2$.
\end{remark}

Lemma \ref{theorem_support} and Theorem \ref{theorem_asymptotic_expansion} together say that the Maurer-Cartan solution $\Phi$ has {\em asymptotic support} on some increasing set $\{Plane(N_0)\}_{N_0 \in \inte_{>0}}$ of subsets of walls (see Definitions \ref{asymptotic_support}); the proofs of these can be found in \cite{kwchan-leung-ma}.

In general, for an element $\Psi \in KS_{\check{X}_0}^1[[t]]$ having asymptotic support on an increasing set of subsets of walls $\{Plane(N_0)\}_{N_0 \in \inte_{>0}}$, one can associate a scattering diagram $\mathscr{D}(\Psi)$ using the same procedure we described above, and such a diagram is always monodromy free if $\Psi$ satisfies the Maurer-Cartan equation \eqref{intro:MCequation}. So Theorem \ref{theorem2} is in fact a consequence of the following more general result, which may be of independent interest:
\begin{theorem}[=Theorem \ref{scatteringtheorem1}]\label{theorem1}
If $\Psi$ is any solution to the Maurer-Cartan equation \eqref{intro:MCequation} having asymptotic support on an increasing set of subsets of walls $\{Plane(N_0)\}_{N_0 \in \inte_{>0}}$, then the associated scattering diagram $\mathscr{D}(\Psi)$ is monodromy free.
\end{theorem}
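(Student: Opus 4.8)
The plan is to exhibit the path-ordered product around the joint $j := P_1\cap P_2$ as the holonomy of a \emph{single-valued} gauge transformation, hence trivial. First I would localize near a generic point of the codimension-two joint $j$: pick a small ball $V\subset B_0$ meeting $j$ but no deeper stratum, so that over $V$ the Kodaira--Spencer dg Lie algebra still admits no nontrivial deformations (as for $\check{X}_0\cong(\mathbb{C}^*)^n$ itself). Restricting $\Psi$ to $\check{p}^{-1}(V)$, this rigidity yields a unique $\facc \in KS^{0}_{\check{X}_0}[[t]]$ over $\check{p}^{-1}(V)$ with $e^{\facc}*0=\Psi$ and the gauge-fixing condition $\check{P}\facc=0$ (imposed via the homotopy operator of Definition \ref{pathspacehomotopy}); the essential point is that this $\facc$ is a genuine single-valued object on $\check{p}^{-1}(V)$.

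Next I would compute the semiclassical limit $\facc_0 := \lim_{\lam^{-1}\to 0}\facc$. Feeding in the asymptotic-support hypothesis (Definition \ref{asymptotic_support}), which presents $\Psi$ asymptotically as a sum of one-wall pieces $\incoming_{\mathbf{w}_1},\incoming_{\mathbf{w}_2}$ and interior pieces $\Phi^{(a)}$ supported near $\check{p}^{-1}(P_a)$ as in \eqref{eqn:Phi_decomposition}, together with the one-wall computation of Proposition \ref{prop:MC_sol_one_wall} and the estimate of Theorem \ref{theorem_asymptotic_expansion}, I would show that $\facc_0$ is locally constant on $V$ away from the walls and that crossing a wall $\mathbf{w}_b$ ($b\in\{1,2\}$ or $b=a$) multiplies the group element $e^{\facc_0}$ on the right by $\Theta_b^{\pm1}$, the sign recording the crossing direction. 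Combinatorially this says $e^{\facc_0}$ ``develops'' around $j$: transporting it once around a small loop $\gamma$ encircling $j$ and reading off the walls in the cyclic order met by $\gamma$ produces exactly the path-ordered product $\Theta_1^{-1}\Theta_2\left(\prod^{\rightharpoonup}\Theta_a\right)\Theta_1\Theta_2^{-1}$.

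Since $\facc$, hence $\facc_0$ and $e^{\facc_0}$, is single-valued on $\check{p}^{-1}(V)$ and extends continuously across each wall, this total change around $\gamma$ must be the identity --- which is precisely the claimed monodromy-freeness. As the argument uses only that $\Psi$ solves \eqref{intro:MCequation} and has asymptotic support on the walls (never that it is the specific Kuranishi solution $\Phi$), Theorem \ref{theorem2} follows by taking $\Psi=\Phi$ and invoking Lemma \ref{theorem_support}.

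I expect the crux to be the second step: pinning down the asymptotics of $\facc$ sharply enough that its semiclassical limit is a genuine step function whose jumps are concentrated on the walls and equal --- in the tropical vertex group, not merely additively in its Lie algebra --- to the $\Theta_b$. This needs the order-of-$\lam^{-1}$ bookkeeping behind Theorem \ref{theorem_asymptotic_expansion}, a check that the various homotopy operators and gauge-fixings in play (for the global $\Psi$, for each $\Phi^{(a)}$, and for the one-wall ans\"atze) are mutually compatible so that the jumps compose as a group product rather than merely summing, and a verification that $\Psi$ is exponentially suppressed away from all walls so it cannot obstruct single-valuedness of $\facc_0$. Some care is also needed to make the localization near a generic point of $j$ rigorous in the general setting where $B_0$ is not globally $\mathbb{R}^n$.
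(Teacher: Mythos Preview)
The paper you are working from is a survey: it states Theorem~\ref{scatteringtheorem1} (as Theorem~\ref{theorem1} in the introduction and again in Section~\ref{twowalls}) but does not prove it. The surrounding text sets up the objects involved --- the ansatz, the homotopy operator $\check{H}$ of Definition~\ref{pathspacehomotopy}, the per-wall gauges $\facc_a$, and Definition~\ref{asymptotic_support} --- and then simply asserts the result, referring the reader to \cite{kwchan-leung-ma} for the proofs (see the sentence just before Definition~\ref{asymptotic_support}: ``the proofs of these can be found in \cite{kwchan-leung-ma}''). So there is no proof in this paper against which to compare your proposal.

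That said, your strategy is reasonable and is in the spirit of what the paper sketches. One point to flag: the paper's presentation works with \emph{separate} gauges $\facc_a$, one for each wall $P_a$, each defined on its own sector $\check p^{-1}(W_a)$ with its own projection $\check P_a$ (see the paragraphs before Theorem~\ref{scatteringtheorem2}). Your proposal instead posits a \emph{single} global gauge $\facc$ on all of $\check p^{-1}(V)$ and reads off the monodromy from its single-valuedness. These are not obviously the same argument: passing from the per-wall $\facc_a$'s to a global $\facc$ (or vice versa) is exactly where the Baker--Campbell--Hausdorff corrections enter and where the jumps must be shown to \emph{compose} in the tropical vertex group rather than merely add in its Lie algebra. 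You correctly identify this as the crux in your final paragraph, but be aware that the paper does not supply the estimates needed to carry it out; you would have to extract them from \cite{kwchan-leung-ma}.
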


See Section \ref{twowalls} (in particular, Theorems \ref{scatteringtheorem1}, \ref{scatteringtheorem2}) for the details. Morally speaking, our results are saying that tropical objects such as scattering diagrams arise as semi-classical limits of solutions of Maurer-Cartan equations.

We will first describe the construction of semi-flat mirror manifolds and fibre-wise Fourier transform in Section \ref{Semiflat}. In Section \ref{setting}, We will relate semi-classical limit of the $A_\infty$ operation $m_k(\lam)$'s to counting of gradient flow trees $m^{Morse}_k$'s. Using the hints from Witten-Morse theory in Section \ref{setting}, we will relate the semi-classical limit of solving MC equation on $\check{X}_0$ to the combinatorial process of completing a scattering diagram in Section \ref{scattering_MCequation}.

\section{Semi-flat Mirror family}\label{Semiflat}

\subsection{Semi-flat mirror manifolds}\label{semiflatmanifolds}
We first give the construction of semi-flat manifolds $X_0$ and $\check{X}_0$, which may be regarded as generalized complex manifolds defined via pure spinor, from an integral affine manifold (possibly non-compact) $B_0$. We follow the definitions of affine manifolds in \cite[Chapter 6]{dbrane}.

Let
\[Aff(\real^n) = \real^n  \rtimes GL_n(\real)\]
be the group of affine transformation of $\real^n$, which is a map $T$ of the form $T(x) = Ax +b $ with $A \in GL_n(\real)$ and $b \in \real^n$. We are particularly interested in the following subgroup of affine transformation
\begin{equation*}
Aff_\real(\inte^n)_0= \real^n \rtimes SL_n(\inte).
\end{equation*}

\begin{definition}
	An $n$-dimensional manifold $B$ is called {\em tropical affine} if it admits an atlas $\{(U_i, \psi_i)\}$ of coordinate charts $\psi_i : U_i \rightarrow \real^n$ such that $\psi_i \circ \psi_j^{-1} \in Aff_\real(\inte^n)_0$.
\end{definition}
In the following construction, we introduce a positive real parameter $\lam$ such that taking $\lam \rightarrow \infty$ corresponds to approaching tropical limit. 
\subsubsection{Construction of $X_0$}
We consider the cotangent bundle $T^*B_0$, equipped with the canonical symplectic form $\omega_{can} = \sum_i dy_i\wedge dx^i$ where $x^i$'s are affine coordinates on $B_0$ and $y_i$'s are coordinates of the cotangent fibers with respect to the basis $dx^1,\dots, dx^n$. There is a lattice subbundle $\Lambda^* \leq T^*B_0$ generated by the covectors $dx^1,\dots,dx^n$. It is well defined since the transition functions lie in $Aff_\real(\inte^n)$. We put
\begin{equation}
X_0 =  T^*B_0 / \Lambda^*,
\end{equation}
equipped with the symplectic form
$$
\omega =\lam \sum_j dy_j \wedge dx^j
$$
descended from $\lam \omega_{can}$. The natural projection map $p: X_0 \rightarrow B_0$ is a Lagrangian torus fibration. We can further consider {\em$B$-field} enriched symplectic structure $\beta+i\omega$ by a semi-flat closed form $\beta = \sum_{i,j} \beta^{j}_i(x) dy_j\wedge dx^i$.

As a generalized complex manifold, we take the pure spinor to be $e^{\beta+i\omega}$ and then locally we have the corresponding maximal isotropic subbundle 
\[
E = Span_\comp \Big\{\mu^j,\bar{\nu}_j \Big\}_{j=1}^n
\] 
where 
\begin{align*}
 \bar{\nu}_j & :=\frac{1}{4 \pi \lam}\Big(\dd{x^j} + i \iota_{\dd{x^j}} \omega + \iota_{\dd{x^j}} \beta \Big) = \frac{1}{4\pi \lam } \Big(\dd{x^j}-\beta^k_j dy_k - i\lam dy_j\Big)\\
\mu^j &:= (-2\pi i )\Big(\dd{y_j} + i\iota_{\dd{y_j}} \omega + \iota_{\dd{y_j}} \beta\Big)  = (-2\pi i)\Big(\dd{y_j} + \beta^j_k dx^k + i\lam dx^j \Big).
\end{align*}

\subsubsection{Construction of $\check{X}_0$}
We now consider the tangent bundle $TB_0$, equipped with the complex structure where the local complex coordinates are given by $y^j+\beta^j_k x^k + i \lam x^j$. Here $y^j$'s are coordinates of the tangent fibers with respect to the basis $\dd{x^1},\dots, \dd{x^n}$, i.e. they are coordinates dual to $y_j's$ on $TB_0$. The condition that $\beta^j_i(x) dx^i$ being closed, for each $j=1,\dots,n$, is equivalent to integrability of the complex structure.

There is a well defined lattice subbundle $\Lambda \leq TB_0$ generated locally by $\dd{x^1},\dots, \dd{x^n}$. We set
\begin{equation*}
\check{X}_0  = TB_0 / \Lambda,
\end{equation*}
equipped with the complex structure $\check{J}_0$ descended from that of $TB_0$, so that the local complex coordinates can be written as $\bmc^j = e^{-2\pi i  (y^j+\beta^j_kx^k+i \lam x^j)}$. The natural projection map $p: \check{X}_0 \rightarrow B_0$ is a torus fibration. 

When considering generalized structure, the pure spinor is taken to be the holomorphic volume form $\check{\Omega}$ given by
\begin{equation}
 \check{\Omega} = \bigwedge_{j=1}^n \big(dy^j + \beta^j_k dx^k + i\lam dx^j \big).
\end{equation}
We have the correspond subbundle given by $T^{0,1}\check{X}_0 \oplus T^*\check{X}_0^{1,0}$ with local frame $\{\delta^j,\bar{\partial}_j\}_{j=1}^n$, where
\begin{align}\label{Bside_subbundle_1}
\bar{\partial}_j &:=\dd{\log\bar{w}^j} = \frac{1}{4 \pi i} \Big(\dd{y^j} +i \lam^{-1}(\dd{x^j}-\beta^k_j \dd{y^k} )\Big)\\
\label{Bside_subbundle_2}\delta^j & := d\log w^j = (-2\pi i ) \big( dy^j + \beta^j_k dx^k + i\lam dx^j \big) . 
\end{align}

\subsubsection{Semi-flat K\"ahler structure} %---optional-----
For later purposes, we also want to give K\"ahler structures on $X_0$ and $\check{X}_0$ by considering a metric $g$ on $B_0$ of Hessian type:
\begin{definition}
	An Riemannian metric $g = (g_{ij})$ on $B_0$ is said to be {\em Hessian type} if it is locally given by $g= \sum_{i,j}\pdpd{\phi}{x^i}{x^j} dx^i \otimes dx^j$ in affine coordinates $x^1, \dots, x^n$ for some convex function $\phi$.
\end{definition}
Assuming first $B$-field $\beta = 0$, a Hessian type metric $g$ on $B_0$ induces a metric on $T^*B_0$ which also descends to $X_0$. In local coordinates, the metric on $X_0$ is of the form
\begin{equation}
g_{X_0} = \sum_{i,j } \lam(g_{ij} dx^i\otimes dx^j +  g^{ij} dy_i \otimes dy_j),
\end{equation}
where $(g^{ij})$ is the inverse matrix of $(g_{ij})$. The metric $g_{X_0}$ is compatible with $\omega$ and gives a complex structure $J$ on $X_0$ with complex coordinates represented by a matrix
\[
J = \left(
\begin{array}{cc}
0 & g^{-1}\\
- g & 0
\end{array} \right)
\]
with respect to the frame $\dd{x^1} ,\dots ,\dd{x^n} ,\dd{y_1},\dots , \dd{y_n}$, having a natural holomorphic volume form which is
\[
\Omega=\bigwedge_{j=1}^{n}(dy_{j}+i \sum_{k=1}^{n} g_{jk}dx^{k}).
\]
The K\"ahler manifold $(X_0, \omega, J)$ is a Calabi-Yau manifold if and only if the potential $\phi$ satisfies the real Monge-Amp\'ere equation
\begin{equation}\label{realmonge}
det(\pdpd{\phi}{x^i}{x^j}) = const.
\end{equation}
In such a case, $p : X_0 \rightarrow B_0$ is a special Lagrangian torus fibration.

On the other hand, there is a Riemannian metric on $\check{X}_0$ induced from $g$ given by
\[
g_{\check{X}_0} = \sum_{i,j} (\lam g_{ij } dx^i\otimes dx^j + \lam^{-1} g_{ij}dy^i \otimes dy^j).
\]
It is compatible with the complex structure and gives a symplectic form
\[
\check{\omega} =2i\pd\pdb \phi = \sum_{i,j} g_{ij } dy^i \wedge dx^j.
\]
Similarly, The potential $\phi$ satisfies the real Monge-Amp\'ere equation \eqref{realmonge} if and only if $(\check{X}_0 ,\check{\omega}, \check{J}_0)$ is a Calabi-Yau manifold.

%------------B-field--------------
In the presence of $\beta$, we need further compatibility condition between $\beta$ and $g$ to obtain a K\"ahler structure. On $X_0$, we treat $\beta+i\omega$ as a complexified K\"ahler class on $X_0$ and require that $\beta \in \Omega^{1,1}(X_0)$ with respect to the complex structure $J$. This is same as saying
\begin{equation}\label{Bfield_assumption}
\sum_{i,j,k} \beta^j_i g_{jk} dx^i\wedge dx^k =0,
\end{equation}
if $\beta = \sum_{i,j}\beta^j_i(x)dy_j \wedge dx^i $ in local coordinates $x^1,\dots ,x^n, y_1,\dots,y_n$.

On $\check{X}_0$, we treat $\beta$ as an endomorphism of $TB_0$ represented by a matrix $(i,j)$-entry is given by $\beta(x)^j_i$ with respect to the frame $\dd{x^1},\dots,\dd{x^n}$. The complex structure we introduced before can be written as
\begin{equation}
\check{J}_{\beta,0}
= \left(
\begin{array}{cc}
\lam^{-1}\beta & \lam^{-1} I   \\
- \lam(I+\lam^{-2}\beta^2)  & -\lam^{-1}\beta
\end{array} \right)
\end{equation}
with respect to the frame $\dd{x^1},\dots ,\dd{x^n}, \dd{y^1},\dots,\dd{y^n}$.

The extra assumption \eqref{Bfield_assumption} will be equivalent to the compatibility of $\check{J}_{\beta,0}$ with $\check{\omega}$. If we treat $(g_{ij})$ as a square matrix, we have the symplectic structure $\check{\omega}$ represented by
\begin{equation}
\check{\omega} =
\left(
\begin{array}{cc}
0 & -g \\
g & 0
\end{array} \right).
\end{equation}
The compatibility condition
\[
\left(
\begin{array}{cc}
\lam^{-1}\beta &\lam^{-1} I\\
-\lam(I+\lam^{-2}\beta^2) & -\lam^{-1}\beta
\end{array}\right)^T
\left(
\begin{array}{cc}
0 & -g \\
g & 0
\end{array}\right)
\left(
\begin{array}{cc}
\lam^{-1}\beta & \lam^{-1} I   \\
-\lam(I+\lam^{-2}\beta^2) & -\lam^{-1} \beta
\end{array}\right) =
\left(
\begin{array}{cc}
0 & -g \\
g & 0
\end{array}\right)
\]
in terms of matrices is equivalent to $\beta g = g \beta$, which is the matrix form of \eqref{Bfield_assumption}. The metric tensor is represented by the matrix
\[
g_{\check{X}_0}=
\left(
\begin{array}{cc}
\lam g(I+\lam^{-2}\beta^2) & \lam^{-1} g\beta \\
\lam^{-1} g\beta & \lam^{-1} g
\end{array} \right),
\]
whose inverse is given by
\[
g^{-1}_{\check{X}_0}
= \left(
\begin{array}{cc}
\lam^{-1} g^{-1}& -\lam^{-1} \beta g^{-1} \\
-\lam^{-1} \beta g^{-1} & \lam(I+\lam^{-2}\beta^2) g^{-1}
\end{array} \right).
\]

\section{SYZ for Generalized complex structure}

In this section, we see that one can relate the sypmplectic structure of $X_0$ to the complex structure of $\check{X}_0$ by SYZ transform for generalized complex structure. We follow \cite{dbrane} for brief review about this tranformation.

\subsection{Generalized complex structure}
For a real manifold $M$, there is a natural Lie bracket $[\cdot,\cdot]_c$ defined on sections of $ TM \oplus T^*M$, namely the Courant bracket, defined by the equation
\[
[X+\xi,Y+\eta]_c = \mathcal{L}_X Y+\mathcal{L}_X(\eta)-\mathcal{L}_Y(\xi)-\half d(\iota_X(\eta)-\iota_Y (\xi)),
\]
where $\mathcal{L}$ stands for the Lie derivative on $M$. It can be checked easily that the Courant bracket satisfies Jacobi identity and hence gives a Lie algebra structure on $\Gamma(M,TM\oplus T^*M)$. There is a natural pairing $\langle \cdot,\cdot \rangle$ defined on the sections of $TM \oplus T^*M$ given by
\[
\langle X+\xi,Y+\eta\rangle = \half (\eta(X)+\xi(Y)
).\]

\begin{definition}
	A generalized complex structure on $M$ ($dim(M) = 2n$), is a maximal isotropic (with respect to $\langle\cdot,\cdot\rangle$) complex subbundle $E$ which is closed under the Courant bracket and that $E \oplus \bar{E} = (TM \oplus T^*M)\otimes \comp$. 
\end{definition}

\begin{remark}
	One may define an generalized almost complex structure to be an $\langle\cdot,\cdot\rangle$ preserving endomorphism $\mathcal{J}$ and $TM \oplus T^*M$ such that $\mathcal{J}^2 = -I $. If we take the $+i$ eigenspace of $\mathcal{J}$ on $TM \oplus T^*M$, we obtain an maximal isotropic subbundle $E$ with $E\oplus \bar{E} =( TM \oplus T^*M)\otimes\comp$. 
\end{remark}

Generalized complex manifolds unitfy both symplectic manifolds and complex manifolds as important special cases. For an almost complex manifold $(M,J)$, on can simply take the complex structure $\mathcal{J}$ to be 
\begin{equation}
\mathcal{J}_J =\Big ( \begin{array}{cc}
				-J & 0\\
				0 & J^* \\
				\end{array} \Big ),
\end{equation}
The corresponding isotropic subbundle $E$ is taken to be $T^{0,1}M \oplus T^*M^{1,0}$. The condition that $E$ is closed under Courant bracket is equivalent to the integrability of $J$.

For an almost symplectic manifold $(M,\omega)$, we can define an bundle map $\omega : TM \rightarrow T^*M $ given by contraction a vector field with $\omega$. In terms of matrix, we define 
\begin{equation}
 \mathcal{J}_\omega = \Big( \begin{array}{cc}
  					0 & \omega^{-1}\\
  					-\omega & 0 
  					\end{array} \Big). 
\end{equation}
We can take $\Gamma(E) = \{ X + i\iota_X \omega\; |\; X\in \Gamma(TM\otimes \comp)\}$ and the integrability condition is equivalent to $\omega$ being a closed form. 

\subsubsection{$B$-field in generalized geometry}
Another advantages of considering generalized complex structure is the presence of a natural notion of twisting by $B$-field. In particular, it enrichs the family of symplectic manifold by applying $B$-field twisting to its corresponding generalized structure $\mathcal{J}_\omega$ as follows: 

Given a closed two form $\beta$ on $M$, one can define an $\langle\cdot,\cdot\rangle$ preserving automorphism $\mathbf{B}_\beta$ of $TM \oplus T^*M$ given by $\mathbf{B}_\beta(X+\xi) = X+\xi+ \iota_X \beta$, and one can obtain new generalized complex structure $\mathbf{B}_\beta(E)$ from $E$ by the following fact.
\begin{prop}
	$\mathbf{B}_\beta(E)$ closed under the Courant bracket if and only if $E$ does.
\end{prop}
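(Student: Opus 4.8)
The plan is to reduce everything to the single fact that, since $\beta$ is closed, the $B$-field transform $\mathbf{B}_\beta$ is an \emph{automorphism of the Courant bracket}:
$$[\mathbf{B}_\beta(u),\mathbf{B}_\beta(v)]_c = \mathbf{B}_\beta\big([u,v]_c\big)\qquad\text{for all } u,v\in\Gamma(TM\oplus T^*M).$$
Granting this, the Proposition is immediate. If $E$ is closed under $[\cdot,\cdot]_c$ and $u,v\in\Gamma(E)$, then $\mathbf{B}_\beta(u),\mathbf{B}_\beta(v)\in\Gamma(\mathbf{B}_\beta(E))$ and $[\mathbf{B}_\beta(u),\mathbf{B}_\beta(v)]_c=\mathbf{B}_\beta([u,v]_c)\in\mathbf{B}_\beta(E)$, so $\mathbf{B}_\beta(E)$ is Courant-closed. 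Conversely, $\mathbf{B}_\beta$ is a bundle isomorphism with inverse $\mathbf{B}_{-\beta}$, and $-\beta$ is again closed, hence $\mathbf{B}_{-\beta}$ is also a Courant automorphism; applying the previous implication to $\mathbf{B}_{-\beta}$ and the subbundle $\mathbf{B}_\beta(E)$ recovers $E$ and shows that closedness of $\mathbf{B}_\beta(E)$ forces closedness of $E$.

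To establish the automorphism identity I would write $u=X+\xi$, $v=Y+\eta$, so that $\mathbf{B}_\beta(u)=X+(\xi+\iota_X\beta)$ and $\mathbf{B}_\beta(v)=Y+(\eta+\iota_Y\beta)$. The $TM$-component of the Courant bracket is the Lie bracket $[X,Y]$, which the transform does not touch, so only the $T^*M$-component needs checking; by bilinearity of $[\cdot,\cdot]_c$ the discrepancy between the one-form part of $[\mathbf{B}_\beta(u),\mathbf{B}_\beta(v)]_c$ and that of $\mathbf{B}_\beta([u,v]_c)$ is
$$\mathcal{L}_X(\iota_Y\beta)-\mathcal{L}_Y(\iota_X\beta)-\tfrac12\,d\big(\iota_X\iota_Y\beta-\iota_Y\iota_X\beta\big)-\iota_{[X,Y]}\beta.$$
Using $\iota_Y\iota_X\beta=-\iota_X\iota_Y\beta$, Cartan's formula $\mathcal{L}_X=d\iota_X+\iota_X d$, the commutator identity $\mathcal{L}_X\iota_Y-\iota_Y\mathcal{L}_X=\iota_{[X,Y]}$, and — crucially — the relation $\mathcal{L}_X\beta=d\iota_X\beta$ which holds precisely because $d\beta=0$, one finds $\mathcal{L}_X(\iota_Y\beta)=\iota_{[X,Y]}\beta+\iota_Y d\iota_X\beta$ and $\mathcal{L}_Y(\iota_X\beta)=d\iota_Y\iota_X\beta+\iota_Y d\iota_X\beta$, and the four terms above cancel, collapsing the expression to $0$. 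Hence $\mathbf{B}_\beta$ preserves $[\cdot,\cdot]_c$.

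There is no serious obstacle here: the lemma is standard in generalized geometry, and the last step is a routine computation. The only point requiring real attention is bookkeeping of the sign/normalization conventions in the given form of the Courant bracket (notably the $\tfrac12 d(\cdots)$ term), which affect the intermediate expressions but not the conclusion. It is also worth recording alongside the proof that $\mathbf{B}_\beta$ preserves the pairing $\langle\cdot,\cdot\rangle$ (already noted in the text), so that it sends maximal isotropic subbundles to maximal isotropic subbundles; this, together with the automorphism property proved here, is what makes $\mathbf{B}_\beta$ genuinely act on generalized complex structures, even though only the Courant-closedness is asserted in the present statement.
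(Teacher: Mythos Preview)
Your proof is correct and is exactly the standard argument. Note, however, that the paper itself does not supply a proof of this proposition; it is stated as a well-known fact from generalized complex geometry (the result goes back to Gualtieri and Hitchin) and is quoted without argument. So there is nothing in the paper to compare your proof against beyond the bare statement, and your write-up fills in precisely the computation one would expect: that $\mathbf{B}_\beta$ is a Courant-bracket automorphism whenever $d\beta=0$, from which the if-and-only-if follows immediately via $\mathbf{B}_\beta^{-1}=\mathbf{B}_{-\beta}$. Your bookkeeping of the one-form terms is accurate for the paper's normalization of the Courant bracket.
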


Therefore $\mathbf{B}_\beta(E)$ is a generalized complex structure if and only if $E$ does. This gives a natural way to obtain new generalized complex manifolds by applying $B$-field twisting. In particular, when $(M,\omega)$ being a symplectic manifold, we have $\Gamma(E) = \{ X + i\iota_X\omega+\iota_X \beta\;|\;X\in \Gamma(TM\otimes \comp)\}$ which can be viewed as a complexified symplectic structure on $M$. 

\subsubsection{Generalized structure via pure spinor}
There is an important class of complex manifold, namely those having a nowhere vanishing holomorphic volume form which is a pure spinor. For generalized complex manifolds, we will also be interested in those having a nowhere vanishing pure spinors which will be defined in the following. In the case that $M$ is even dimensional, the spinor bundles $\mathcal{S}^\pm$ is given by
\[\displaystyle
\mathcal{S}^+  = \big(\bigwedge^{ev} T^*M \big) \otimes \big(\bigwedge^{2n} TM)^\half
\]
\[\displaystyle
\mathcal{S}^-  = \big(\bigwedge^{odd} T^*M \big) \otimes \big(\bigwedge^{2n} TM)^\half.
\]
We will remove the factor $\big(\bigwedge^{2n} TM)^\half$ by twisting with its inverse line bundle in the rest of the section, for the discussion on pure spinors. There is a natural action of $TM \oplus T^*M$ on the spinor bundles $\mathcal{S}$, namely the Clifford multiplication, given by
\[
(X+\xi) \cdot \phi = (\iota_X + \xi \wedge) \phi,
\]
for $\varphi \in \bigwedge^* T^*M$. 
\begin{definition}
	Given a spinor $\varphi \in \bigwedge^* (T^*M \otimes \comp)$, we associate a subbundle 
	\[
	E_\varphi = \{ (X+\xi)\;|\; (\iota_X+\xi\wedge)  \varphi = 0\}.
	\]
	$\varphi$ is said to be pure if $E_\varphi$ is maximal isotropic. 
\end{definition}
In the case of ordinary complex manifolds, the holomorphic volume forms determine completely the complex structures. We have a similar statement for pure spinors in the generalized setting.
\begin{prop}
	If a pure spinor $\varphi$ is a closed form such that $E_\varphi \cap \bar{E}_\varphi = \{0\}$, then $E_\varphi$ is a generalized complex structure. 
\end{prop}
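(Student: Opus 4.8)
The plan is to verify directly the two defining properties of a generalized complex structure for $E := E_\varphi$: that $E \oplus \bar{E} = (TM \oplus T^*M) \otimes \comp$, and that $E$ is closed under the Courant bracket. The first is a rank count. Since $\varphi$ is a pure spinor, $E$ is maximal isotropic with respect to $\langle\cdot,\cdot\rangle$, hence of complex rank $2n = \half \dim_\comp\big((TM \oplus T^*M)\otimes\comp\big)$; conjugating the defining relation $(\iota_X + \xi\wedge)\varphi = 0$ shows that $\bar{E} = E_{\bar\varphi}$, which is again maximal isotropic of rank $2n$ (being the conjugate of a maximal isotropic subbundle). The hypothesis $E \cap \bar{E} = \{0\}$ then forces $E + \bar{E}$ to have rank $4n$, so $E \oplus \bar{E}$ is all of $(TM \oplus T^*M)\otimes\comp$.

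The heart of the matter is closure under the Courant bracket, and here I would exploit the compatibility between the Clifford action, the de Rham differential, and the bracket. Write $\rho(v) := \iota_X + \xi\wedge$ for Clifford multiplication by $v = X+\xi$, an odd operator on $\Omega^*(M)$. Two standard facts from Cartan calculus enter: (i) on an isotropic subbundle the Courant bracket coincides with the (non-antisymmetric) Dorfman bracket $[v,w]_D = \mathcal{L}_X Y + \mathcal{L}_X\eta - \iota_Y d\xi$, because a short computation gives $[v,w]_c = [v,w]_D - d\langle v,w\rangle$ and $\langle v,w\rangle = 0$ for $v,w \in \Gamma(E)$; and (ii) the derived-bracket identity $\rho([v,w]_D) = \big[[d,\rho(v)],\rho(w)\big]$, with graded commutators of operators on the right, which follows by expanding $[d,\rho(v)] = \mathcal{L}_X + (d\xi)\wedge$ via Cartan's magic formula and then commuting with $\rho(w)$. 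I would either include this short computation or simply cite \cite{dbrane}.

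With (i) and (ii) in hand the verification is immediate. Let $v,w \in \Gamma(E_\varphi)$, so $\rho(v)\varphi = \rho(w)\varphi = 0$, and recall that $\varphi$ is closed, i.e. $d\varphi = 0$. Expanding,
$$\rho([v,w]_D)\,\varphi = \big(d\,\rho(v) + \rho(v)\,d\big)\rho(w)\varphi - \rho(w)\big(d\,\rho(v) + \rho(v)\,d\big)\varphi = 0,$$
since after moving operators past one another each of the four resulting terms contains one of the vanishing expressions $\rho(v)\varphi$, $\rho(w)\varphi$, or $d\varphi$. Hence $[v,w]_D \in \Gamma(E_\varphi)$, and by (i), $[v,w]_c = [v,w]_D \in \Gamma(E_\varphi)$. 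Together with the first paragraph, this shows $E_\varphi$ is a generalized complex structure.

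The only genuinely non-formal ingredient is the derived-bracket identity (ii); once it is granted, both the closedness of $\varphi$ and the isotropy of $E_\varphi$ are used in an essentially mechanical way. So I expect the main obstacle — if one insists on a self-contained treatment rather than citing \cite{dbrane} — to be the sign bookkeeping in establishing $\rho([v,w]_D) = \big[[d,\rho(v)],\rho(w)\big]$, which requires care with the $\mathbb{Z}_2$-grading of the operators $d$, $\iota_X$ and $\xi\wedge$.
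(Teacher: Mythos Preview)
The paper states this proposition without proof; it is quoted as background from the generalized geometry literature (following \cite{dbrane}). Your argument is correct and is in fact the standard one, going back to Gualtieri's thesis: the rank count for $E\oplus\bar E$ is immediate from purity together with the hypothesis $E\cap\bar E=\{0\}$, and Courant-integrability follows from the derived-bracket identity $\rho([v,w]_D)=\big[[d,\rho(v)],\rho(w)\big]$ applied to the annihilator of a closed form. Your sketch of (i) and (ii) is accurate, including the identification $[d,\rho(v)]=\mathcal{L}_X+(d\xi)\wedge$ and the observation that on an isotropic subbundle the Courant and Dorfman brackets agree. The only place to be a little careful, as you yourself flag, is the sign bookkeeping in the graded commutators---for instance $[(d\xi)\wedge,\iota_Y]=-(\iota_Y d\xi)\wedge$ because $d\xi$ is a $2$-form---but once those are pinned down the computation $\rho([v,w]_D)\varphi=0$ goes through exactly as you wrote.
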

Given a $B$-field $\beta$, it acts on the form $\varphi$ by the formula $\beta \cdot \varphi = e^{\beta}\wedge \varphi$. One can easily check the relation $E_{\beta\cdot\varphi} = \mathbf{B}_\beta(E_\varphi)$. In the case $(M,\omega)$ being symplectic manifold, the pure spinor can be taken to be $e^{i\omega}$. The $B$-field twisted spinor is $e^{\beta+i\omega}$. 

%------------------SYZ transform of generalized structure---------------------

\subsection{SYZ transform of generalized complex structure}
In this section, we study the SYZ transform of generalized complex structure by defining a Fourier-Mukai type transform for pure spinor and show that the mirror manifolds pair, or more precisely their associated pure spinor, are correspond to each other. The result in this section are from \cite{chanleung08}. 

\subsubsection{Fourier-Mukai type transform}
Following \cite{chanleung08}, we recall the definition for the Fourier-Mukai type transform \[
\mathcal{F}_{\mathcal{M}} : \Omega^*_0(X_0\times_{B_0} \Lambda^*) \rightarrow \Omega^*_0(\check{X}_0\times_{B_0}\Lambda),
\]
here the notation $\Omega^*_0(X_0 \times_{B_0} \Lambda^*)$ stands for differential forms having rapid decay along the fiber of the lattice bundle $\Lambda^*$. 

First, notice that we have dual torus fibrations:
\[
\xymatrix{ X_0 \ar[dr]_p && \check{X}_0 \ar[dl]^{\check{p}}\\
	& B_0 &,}
\]
and there is a unitary line bundle on $X_0\times_{B_0} \check{X}_0$, called the Poincar\'e bundle $\mathcal{P}$, which serves as the universal bundle when treating $\check{X}_0$ as fiberwise moduli space of flat unitary connection on fibres of $X_0$. We give a description for the Poincar\'e bundle in local coordinates $(y,x,\check{y}) = (y_1,\dots,y_n,x^1,\dots,x^n, y^1,\dots,y^n)$ for $T^*U\times_{B_0} TU$. We consider a trivial bundle on the total space of $T^*U\times_{B_0} TU$ equipped with connection 
\[d + \pi i [ (y,d\check{y})-( dy,\check{y}) ], \]
and an fiberwise action by the lattice bundle $\Lambda^* \times_{B_0}\Lambda$ given by 
\[
(\lambda,\check{\lambda}) \cdot( y,\check{y}, t) = ( y + \lambda,\check{y} + \check{\lambda}, e^{\pi i [( y,\check{\lambda})-( \lambda,\check{y})]} t ),
\]
where $\lambda$ and $\check{\lambda}$ are fiberwise integer coordinates for $\Lambda^*$ and $\Lambda$ respectively. We therefore define the Poincar\'e bundle $(\mathcal{P},\nabla_{\mathcal{P}})$ to be the quotient bundle under this action. It have curvature form $F_{\nabla_{\mathcal{P}}} = 2\pi i \sum_j dy_j \wedge dy^j$. 

Our transform $\mathcal{F}_{\mathcal{M}}$ is constructed by defining a universal differential form on the space $(X_0 \times_{B_0} \Lambda^*) \times_{B_0} (\check{X}_0\times_{B_0} \Lambda)$. We first define the Fourier transform kernel function by
\begin{equation}\label{kernelfunction}
 \phi_{\mathcal{F}_{\mathcal{M}}} = e^{2\pi i [(y,\check{\lambda}) - (\lambda,\check{y})]+\frac{i}{2\pi}F_{\nabla_{\mathcal{P}}}}.
\end{equation}
Making use of the projection maps $\pi$ and $\check{\pi}$ shown below
\begin{equation*}
\xymatrix{ &(X_0 \times_{B_0} \Lambda^*) \times_{B_0} (\check{X}_0\times_{B_0} \Lambda) \ar[dl]_{\pi} \ar[dr]^{\check{\pi}}&\\
(X_0 \times_{B_0} \Lambda^*) & &	 (\check{X}_0\times_{B_0} \Lambda),}
\end{equation*}
we define
\begin{equation}\label{FourierMukaidef}
 \mathcal{F}_{\mathcal{M}} (\alpha) = (-1)^{\frac{n(n+1)}{2}} \check{\pi}_*\big(\pi^*(\alpha) \phi_{\mathcal{F}_{\mathcal{M}}}\big).
\end{equation}
Here $\check{\pi}_*$ means integration along the fiber of $\check{\pi}$. 
\begin{remark}
 The space $X_0 \times_{B_0} \Lambda^*$ can be identified with the space of fiberwise affine loop $\mathcal{L}X_0 = \{ \;\gamma\;|\;\gamma:[0,1] \rightarrow X_0 \;\;affine\;loop\;\}$. Under the identification 
$$\mathcal{L}X_0 \times_{B_0} \mathcal{L} \check{X}_0 \cong (X_0 \times_{B_0} \Lambda^*) \times_{B_0} (\check{X}_0\times_{B_0} \Lambda),$$
we have the relation
 \[
 \phi_{\mathcal{F}_{\mathcal{M}}}
 (\gamma,\check{\gamma}) = Hol_{\nabla_{\mathcal{P}}}(\gamma \times \check{\gamma}) e^{\frac{i}{2\pi}F_{\nabla_{\mathcal{P}}}}. 
 \]
\end{remark}
\subsubsection{SYZ transform for generalized Calabi-Yau structures}
Making use of the Fourier transform $\mathcal{F}_{\mathcal{M}}$ and its inverse, we can transform pure spinors from $X_0$ to $\check{X}_0$ and vice versa. The following statement from \cite{chanleung08} confirms that $(X_0,e^{\beta+i\omega})$ and $(\check{X}_0,\check{\Omega})$ are mirror pairs.

\begin{prop}
	\begin{equation}
	\mathcal{F}_{\mathcal{M}} (e^{\beta+i\omega}) = \check{\Omega}. 
	\end{equation}
\end{prop}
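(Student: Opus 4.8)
The plan is to establish the identity by a direct computation in a single affine chart. Since $e^{\beta+i\omega}$ and $\check{\Omega}$ are both assembled from the integral affine data of $B_0$, and the kernel $\phi_{\mathcal{F}_{\mathcal{M}}}$ is manifestly invariant under the transition maps in $Aff_\real(\inte^n)_0$, it suffices to verify $\mathcal{F}_{\mathcal{M}}(e^{\beta+i\omega}) = \check{\Omega}$ over $T^*U\times_{B_0} TU$ for an affine chart $U\subset B_0$, with affine coordinates $x^i$ on $U$, cotangent-fibre coordinates $y_j$, and tangent-fibre coordinates $y^j$; both sides being globally defined forms, the local identity implies the global one.

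First I would put the integrand $\pi^*(e^{\beta+i\omega})\,\phi_{\mathcal{F}_{\mathcal{M}}}$ into product form. Writing $\beta+i\omega = \sum_j dy_j\wedge\xi^j$ with $\xi^j = \sum_i \beta^j_i(x)\,dx^i + i\lam\,dx^j$ a $1$-form pulled back from $U$, the relation $dy_j\wedge dy_j = 0$ collapses the exponential into the finite product $e^{\beta+i\omega} = \prod_{j=1}^n (1 + dy_j\wedge\xi^j)$. Likewise $\tfrac{i}{2\pi}F_{\nabla_{\mathcal{P}}}$ is a scalar multiple of $\sum_j dy_j\wedge dy^j$, so $e^{\frac{i}{2\pi}F_{\nabla_{\mathcal{P}}}} = \prod_{j=1}^n (1 + c\, dy_j\wedge dy^j)$ for a constant $c$. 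Multiplying and dropping the vanishing cross terms $dy_j\wedge\xi^j\wedge dy_j\wedge dy^j$, the integrand becomes
\[
\Bigl(\prod_{j=1}^n \bigl(1 + dy_j\wedge(\xi^j + c\, dy^j)\bigr)\Bigr)\; e^{2\pi i[(y,\check{\lambda}) - (\lambda,\check{y})]}.
\]

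Next I would carry out the fibrewise integration $\check{\pi}_*$, which over the fibre $p^{-1}(b)\times(\Lambda^*)_b$ amounts to extracting the coefficient of $dy_1\wedge\cdots\wedge dy_n$, integrating over the $y$-torus, and summing over $\Lambda^*$. The oscillatory factor $e^{2\pi i(y,\check{\lambda})}$ integrates to zero over the torus unless $\check{\lambda} = 0$, so the output is automatically concentrated on the zero section of $\Lambda$ and is a genuine form on $\check{X}_0$; on the constant-loop (zero) section of $\Lambda^*$, where $\pi^*(e^{\beta+i\omega})$ lives, the sum over $\Lambda^*$ is trivial. The surviving coefficient of $dy_1\wedge\cdots\wedge dy_n$ is obtained by choosing, for each $j$, either the term $\xi^j$ or the term $c\, dy^j$; by bilinearity of the wedge these choices reassemble into $\bigwedge_{j=1}^n(\xi^j + c\, dy^j)$, up to the reordering sign $(-1)^{n(n-1)/2}$ incurred when moving the $n$ one-forms $dy_j$ to the front. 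Combining with the normalization $(-1)^{n(n+1)/2}$ from the definition of $\mathcal{F}_{\mathcal{M}}$ and rewriting $\xi^j + c\, dy^j = c\, dy^j + \beta^j_k dx^k + i\lam dx^j$, one recognizes $\check{\Omega} = \bigwedge_j(dy^j + \beta^j_k dx^k + i\lam dx^j)$, once the conventions are fixed so that all accumulated scalars cancel.

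The one genuinely delicate point will be the sign and orientation bookkeeping: the constant $c$ coming from $\tfrac{i}{2\pi}F_{\nabla_{\mathcal{P}}}$, the reordering sign $(-1)^{n(n-1)/2}$, the normalization $(-1)^{n(n+1)/2}$, the orientation of the torus fibre of $p$ used in $\check{\pi}_*$, and the duality convention implicit in the pairing $(y,\check{y})$ must all be tracked carefully so that the transform lands on $\check{\Omega}$ rather than $\overline{\check{\Omega}}$ or a sign-twisted variant. A secondary, more conceptual issue is that $e^{\beta+i\omega}$ is not rapidly decaying along the $\Lambda^*$-fibres, so strictly one reads it as the contribution of the constant loop (the zero section of $\Lambda^*$) — equivalently, one runs the same computation with the ordinary correspondence $X_0\times_{B_0}\check{X}_0$ in place of the loop spaces. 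The closedness of $\beta$ enters only to guarantee that $\check{\Omega}$ is globally a well-defined holomorphic volume form, not for the pointwise identity of forms within a chart.
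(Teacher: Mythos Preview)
The paper itself does not supply a proof of this proposition; it is stated as a result imported from \cite{chanleung08}. Your direct computation --- rewriting $e^{\beta+i\omega}$ and $e^{\frac{i}{2\pi}F_{\nabla_{\mathcal{P}}}}$ as finite products of the form $\prod_j(1+dy_j\wedge\cdot)$, extracting the $dy_1\wedge\cdots\wedge dy_n$ coefficient via fibre integration, and using the oscillatory factor to localise to $\check{\lambda}=0$ --- is exactly the standard argument one finds in the cited reference, and is correct in outline.

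Two comments on execution. First, your caveat about sign bookkeeping is not idle: with the conventions as literally stated in this survey ($F_{\nabla_{\mathcal{P}}}=2\pi i\sum_j dy_j\wedge dy^j$ and the normalisation $(-1)^{n(n+1)/2}$), the computation with the most natural fibre orientation yields $\bigwedge_j(\xi^j - dy^j)$ rather than $\bigwedge_j(dy^j+\xi^j)$, so one of the conventions (fibre orientation, sign of the kernel, or the global normalising sign) must be pinned down from \cite{chanleung08} rather than from this paper. You should make that reconciliation explicit rather than leave it as a remark. Second, your handling of the domain issue is right: $e^{\beta+i\omega}$ is semi-flat in the sense used later in the paper (constant along fibres of $\mathcal{L}X_0\to B_0$ and supported on constant loops), so $\pi^*$ is to be read as extension by zero off the $\lambda=0$ section of $\Lambda^*$, and then the rapid-decay hypothesis is vacuously satisfied.
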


For a complete understanding of SYZ transform of generalized structures, we consider a Fourier transform for generalized vector field (i.e. section of $T\oplus T^*$) as follows

\begin{equation}\label{Fouriertransform}
\mathcal{F} : \Gamma(\mathcal{L}X_0, (TX_0\oplus T^*X_0)_\comp) \rightarrow \Gamma(\mathcal{L} \check{X}_0, (T\check{X}_0\oplus T^*\check{X}_0)_\comp),
\end{equation}
defined in a similar fashion as $\mathcal{F}_{\mathcal{M}}$. On $\mathcal{L}X_0 \times_{B_0} \mathcal{L}\check{X}_0= X_0 \times_{B_0} \Lambda^* \times_{B_0} \check{X}_0 \times_{B_0} \Lambda $, there is a natural bundle isomorphism
\[
\rho : \pi^*(TX_0 \oplus T^*X_0)_\comp \rightarrow \check{\pi}^*(T\check{X}_0\oplus T^*\check{X}_0)_\comp,
\]
which gives an explicit identification of the bundle $\pi_*(E_{\exp(\beta+i\omega)}) \cong \check{\pi}_*(E_{\check{\Omega}})$ via $\rho$, given by
\begin{align*}
 \rho(\bar{\nu}_j) &= \bar{\partial}_j\\
 \rho(\mu^j) & = \delta^j. 
\end{align*}
The transform $\mathcal{F}$ is given by the above identification and a fibre-wise Fourier series type expansion using the kernel function $\phi_{\mathcal{F}} = e^{2\pi i [(y,\check{\lambda}) - (\lambda,\check{y})]}$.

$e^{\beta+i\omega}$ and $\check{\Omega}$ are special cases of semi-flat pure spinors, which are those being constant along the fiber $pr : \mathcal{L} X_0 \rightarrow B_0$ and supported on the locus of constant loops. From \cite[Chapter 6]{dbrane}, we see that $\mathcal{F}_{\mathcal{M}}$ relates general semi-flat pure spinors, which corresponding to generalized complex structures, for the dual torus fibrations $X_0$ and $\check{X}_0$. 
\begin{prop}
	Let $\varphi \in \Omega^*(X_0)$ be a semi-flat pure spinor, then $\varphi$ defines a generalized complex structure iff $\mathcal{F}_{\mathcal{M}}(\varphi)$ does.
\end{prop}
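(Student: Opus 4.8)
The plan is to show that the Fourier--Mukai transform $\mathcal{F}_{\mathcal{M}}$, together with the companion transform $\mathcal{F}$ on generalized vector fields and the bundle isomorphism $\rho$, transports each ingredient of the definition of a generalized complex structure from $X_0$ to $\check{X}_0$. Recall that a pure spinor $\varphi$ defines a generalized complex structure exactly when (1) $E_\varphi$ is maximal isotropic (this is purity of $\varphi$, part of our hypothesis), (2) $E_\varphi \cap \bar{E}_\varphi = \{0\}$, and (3) $E_\varphi$ is closed under the Courant bracket, the last condition being equivalent to the existence of a (local) section $v$ of $(TX_0\oplus T^*X_0)_\comp$ with $d\varphi = v\cdot\varphi$ (the Proposition above is the special case $v=0$; see \cite{dbrane}). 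Since $\mathcal{F}_{\mathcal{M}}$ is a linear isomorphism from semi-flat forms on $X_0$ onto semi-flat forms on $\check{X}_0$, with inverse the opposite Fourier--Mukai transform \cite{chanleung08}, and since for semi-flat $\varphi$ the subbundle $E_\varphi$ admits a fibrewise-constant frame, it suffices to check that (1)--(3) are respected by the transform; the converse follows by running the same argument for $\mathcal{F}_{\mathcal{M}}^{-1}$.

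First I would establish the Clifford intertwining identity
\begin{equation*}
\mathcal{F}_{\mathcal{M}}\big((X+\xi)\cdot\varphi\big) = \mathcal{F}(X+\xi)\cdot\mathcal{F}_{\mathcal{M}}(\varphi),
\end{equation*}
for semi-flat $\varphi$ and semi-flat $X+\xi$, where $\mathcal{F}$ on semi-flat sections is given by $\rho$. This is a direct computation with the kernel $\phi_{\mathcal{F}_{\mathcal{M}}}$: moving the contraction $\iota_X$ and the wedge $\xi\wedge$ through $\pi^*$ and through integration along the closed fibres of $\check{\pi}$ turns them into derivatives and polynomial weights of $\phi_{\mathcal{F}_{\mathcal{M}}}$ in the torus directions, and matching these against $\rho(\bar{\nu}_j)=\bar{\partial}_j$, $\rho(\mu^j)=\delta^j$ and their conjugates reproduces the right-hand side; this step is essentially forced by the choice of $\rho$. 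From the identity and invertibility of $\mathcal{F}_{\mathcal{M}}$ and $\rho$ one gets $E_{\mathcal{F}_{\mathcal{M}}(\varphi)} = \rho(E_\varphi)$. A finite check on the explicit frames (it is enough to verify it on $E_{e^{\beta+i\omega}}\times\bar{E}_{e^{\beta+i\omega}}$, since each of $E_{e^{\beta+i\omega}}$ and $\bar{E}_{e^{\beta+i\omega}}$ is isotropic and together they span everything) shows that $\rho$ preserves the pairing $\langle\cdot,\cdot\rangle$, hence carries maximal isotropics to maximal isotropics: this is condition (1) on the mirror side, so $\mathcal{F}_{\mathcal{M}}(\varphi)$ is again pure. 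Because $\rho$ is the complexification of a real isomorphism it commutes with conjugation, so $\bar{E}_{\mathcal{F}_{\mathcal{M}}(\varphi)} = \rho(\bar{E}_\varphi)$ and therefore $E_{\mathcal{F}_{\mathcal{M}}(\varphi)}\cap\bar{E}_{\mathcal{F}_{\mathcal{M}}(\varphi)} = \rho\big(E_\varphi\cap\bar{E}_\varphi\big)$, which gives condition (2) in both directions at once.

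For condition (3) I would combine the Clifford intertwining with the behaviour of $\mathcal{F}_{\mathcal{M}}$ under $d$. Differentiating the kernel gives $d\phi_{\mathcal{F}_{\mathcal{M}}} = \phi_{\mathcal{F}_{\mathcal{M}}}\wedge 2\pi i\,[(dy,\check{\lambda})-(\lambda,d\check{y})]$ (using $dF_{\nabla_{\mathcal{P}}}=0$), so after integration along the fibres the failure of $\mathcal{F}_{\mathcal{M}}$ to commute with $d$ is measured by wedging with $d\check{y}^j$ and by multiplication by the weights $\lambda_j$; the latter, via $\lambda_j\phi_{\mathcal{F}_{\mathcal{M}}} = \tfrac{i}{2\pi}\partial_{\check{y}^j}\phi_{\mathcal{F}_{\mathcal{M}}}$ together with Cartan's formula, turns into contraction by $\partial_{\check{y}^j}$. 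Thus $\mathcal{F}_{\mathcal{M}}(d\varphi) = \pm\, d\big(\mathcal{F}_{\mathcal{M}}(\varphi)\big) + v_0\cdot\mathcal{F}_{\mathcal{M}}(\varphi)$ for a fixed semi-flat generalized vector field $v_0$ on $\check{X}_0$. Hence, if $d\varphi = v\cdot\varphi$, then $d\big(\mathcal{F}_{\mathcal{M}}(\varphi)\big) = \pm\big(\mathcal{F}(v)-v_0\big)\cdot\mathcal{F}_{\mathcal{M}}(\varphi)$ is again of the form $v''\cdot\mathcal{F}_{\mathcal{M}}(\varphi)$, so $E_{\mathcal{F}_{\mathcal{M}}(\varphi)}$ is closed under the Courant bracket. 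Combined with (1) and (2), and with the symmetric argument for $\mathcal{F}_{\mathcal{M}}^{-1}$, this shows that $\varphi$ defines a generalized complex structure if and only if $\mathcal{F}_{\mathcal{M}}(\varphi)$ does.

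I expect the main obstacle to be the bookkeeping in the last step: one must check that every correction term produced by $d\phi_{\mathcal{F}_{\mathcal{M}}}$, by commuting $d$ past $\pi^*\alpha$, and by passing $d$ through $\check{\pi}_*$, genuinely reassembles into Clifford multiplication by a single generalized vector field on $\check{X}_0$ — leaving no stray non-Clifford operator — and that all the signs, from the prefactor $(-1)^{n(n+1)/2}$, from graded commutation, and from integration along the fibre, are consistent. By contrast, the Clifford intertwining of the second step is dictated by the definition of $\rho$ and should be routine once conventions are fixed, and the auxiliary facts that $\rho$ is an isometry for $\langle\cdot,\cdot\rangle$ and commutes with conjugation are finite computations on the explicit frames $\{\bar{\nu}_j,\mu^j\}$ and $\{\bar{\partial}_j,\delta^j\}$.
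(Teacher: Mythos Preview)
The paper does not supply its own proof of this proposition; it is stated as a consequence of \cite[Chapter~6]{dbrane}, with the surrounding paragraphs only assembling the ingredients (the map $\rho$, the transform $\mathcal{F}$, and the fact that $\mathcal{F}_{\mathcal{M}}$ intertwines $d$ on $\check{X}_0$ with the equivariant differential $d+\iota_{\dot\gamma}$ on $\mathcal{L}X_0$). So there is no proof in the paper to compare against; your outline is precisely the standard argument one would extract from that reference, and it is sound.

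One simplification you can make in your third step: the correction term $v_0$ actually vanishes in the semi-flat setting. The paper records that $\mathcal{F}_{\mathcal{M}}$ carries $d$ on $\Omega^*(\check{X}_0)$ to $d+\iota_{\dot\gamma}$ on $\Omega^*_{cf}(\mathcal{L}X_0)$; but semi-flat spinors are by definition supported on the locus of constant loops, where $\dot\gamma=0$, so on such forms $\mathcal{F}_{\mathcal{M}}$ intertwines $d$ with $d$ (up to the degree-shift sign). Hence $d\varphi = v\cdot\varphi$ transforms directly to $d\,\mathcal{F}_{\mathcal{M}}(\varphi)=\pm\,\mathcal{F}(v)\cdot\mathcal{F}_{\mathcal{M}}(\varphi)$, and the ``bookkeeping'' you anticipate as the main obstacle disappears. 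Your remaining checks---that $\rho$ is the complexification of the real T-duality swap $\partial_{y_j}\leftrightarrow dy^j$, $dy_j\leftrightarrow \partial_{y^j}$ (identity on base directions), hence preserves $\langle\cdot,\cdot\rangle$ and commutes with conjugation---are correct and can be read off from the explicit formulas for $\bar\nu_j,\mu^j,\bar\partial_j,\delta^j$ given in the paper.
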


Certainly it will be interested to relax the semi-flat condition on pure spinors which allows them to vary along fibers and have nontrivial contribution from the higher Fourier modes. Unfortunately, only the case concerning mirror symplectic structures and complex structures has been carried out, which will be the contents of the coming subsection.

%----------------------------------SYZ for dgBV-----------------------------------------------------------

\subsection{Deformations of pure spinors and Fourier transform}

More precisely, we are considering the deformations of the pure spinors $e^{\beta+i\omega}$ and $\check{\Omega}$ which not necessary being semi-flat, resulting in an structure of dgBV. To begin with, we start with the case of deformation of holomorphic volume form $\check{\Omega}$.

\subsubsection{Deformation of holomorphic volume form}

On the complex manifold $(\check{X}_0,\check{\Omega})$, we have the maximal isotropic subbundle $E_{\check{\Omega}}$ given by equation \eqref{Bside_subbundle_1} \eqref{Bside_subbundle_2}. A deformation of $\check{\Omega}$ will be given by the polyvector fields $\check{\varphi} \in PV^{*,*}(\check{X}_0) = \Omega^{0,*}(\check{X}_0,\wedge^* T^{1,0}_{\check{X}_0}) = \Gamma(\check{X}_0,\bigwedge^* \bar{E}_{\check{\Omega}})$ via Clifford action $e^{\check{\varphi}}\dashv \check{\Omega}$. We will be looking for those deformation resulting in a closed pure spinor satisfying
$$
d(e^{\check{\varphi}} \dashv \check{\Omega}) = 0,
$$
which will be equivalent to certain Maurer-Cartan equation on $\check{\varphi}$.

There is a dgBV (and hence its associated dgLa) structure on polyvector fields on $\check{X}$, $PV^{i,j}(\check{X}_0) = \Omega^{0,j}(\check{X},\wedge^i T^{1,0}_{\check{X}_0})$ capturing deformation of $\check{\Omega}$, with the degree on $PV^{i,j}(\check{X}_0)$ taken to be $j-i$. We give a brief review of this dgBV structure, following \cite{silithesis}.

\begin{definition}
A $\inte$-graded differetial graded Batalin-Vilkovisky (dgBV) algebra is a $\inte$-graded unital differential graded algebra $(V,\pdb,\wedge)$ together with degree $1$ operation $\bvd$ satisfying
\begin{eqnarray*}
 \bvd (1) =0,\\
\bvd^2 = \pdb \bvd + \bvd \pdb = 0,
\end{eqnarray*}
and for all $v\in V^k$, the operation $\delta_v : V^{\bullet} \rightarrow V^{\bullet+k+1}$ defined by
\begin{equation}\label{BVderivation}
\delta_v(w) : = \bvd(v\wedge w ) - \bvd(v) \wedge w -(-1)^{k} v\wedge \bvd(w)
\end{equation}
being derivation of degree $k+1$. 
\end{definition}

\begin{definition}We define the bracket operation $[\cdot,\cdot] : V \otimes V \rightarrow V$ by $[v,w] = (-1)^{|v|+1}\delta_v(w)$.
\end{definition}

\begin{notation}
	In local frames $\bar{\delta}^j$'s and $\partial_j$'s, with an ordered subset $I = \{i_1,\dots,i_k\} \subset \{1,\dots,n\}$, we write
	$$
	\bar{\delta}^I = \bar{\delta}^{i_1}\wedge\cdots\wedge \bar{\delta}^{i_k},\;\partial_I = \partial_{i_1} \wedge \cdots \wedge \partial_{i_k},
	$$
	and similarly for $\delta^I$ and $\bar{\partial}_I$.
\end{notation}

There is a natural differential graded algebra (dgA) structure on $PV^{*,*}(\check{X}_0)$ given by the Dolbeault differential $\pdb$ and the graded commutative wedge product $\wedge$. For the BV operator $\Delta$, We make use of the holomorphic volume form $\check{\Omega}$. Given a polyvector field of the form $\partial_I$, we define 
\[\partial_I \dashv \check{\Omega} = \iota_{\partial_{i_1}}\cdots \iota_{\partial_{i_k}} \check{\Omega}. 
\]
For $\alpha = \alpha^I_J \bar{\delta}^J \wedge \partial_I$ and $\beta = \beta^K_L \bar{\delta}^L \wedge \partial_K$, we let $\alpha \dashv \check{\Omega}  \in \Omega^{*,*}(\check{X})$ given by
\[
\alpha \dashv \check{\Omega} = \alpha^I_J \bar{\delta}^J \wedge(\partial_I \dashv \check{\Omega}) .
\]
\begin{definition}\label{BVdifferential}
We can therefore define the BV differential $\bvd_{\check{\Omega}}$ (depending on $\check{\Omega}$) by 
\begin{equation}
\bvd_{\check{\Omega}} \alpha \dashv\check{\Omega} : = \partial(\alpha \dashv \check{\Omega}).
\end{equation}
\end{definition}
We will drop the dependence on $\check{\Omega}$ in the notation $\Delta_{\check{\Omega}}$ if there is no confusion. In terms of local frames, if we have $\check{\Omega} = \frac{\delta^1 \cdots \delta^n}{f}$ for some nowhere vanishing holomorphic function $f$, we have 
\[
\Delta_{\check{\Omega}} \alpha = \sum_{r=1}^n f\partial_{r}(\frac{\alpha^I_J}{f}) \delta^r \lrcorner (\bar{\delta}^J\wedge \partial_I). 
\]
We can also express the operation $\delta_\alpha(\beta)$ defined in \ref{BVderivation} for $\alpha$ and $\beta$ as above explicitly in local coordinates,
\[
\delta_\alpha(\beta) = \sum_{r=1}^n \partial_r (\beta^K_L) (\delta_r \lrcorner \alpha) \wedge (\bar{\delta}^L \wedge \partial_K) + (-1)^{j-i} \partial_r (\alpha^I_J) \bar{\delta}^J \wedge \partial_I \wedge \delta_r \lrcorner (\beta).
\]
We can check that $(PV^{*,*}(\check{X}),\pdb,\Delta)$ give a dgBV algebra as in \cite{silithesis}. We have a differential graded Lie algebra (dgLa) structure $(D,[\cdot,\cdot])$, with differential $D$ given by $D = \pdb + \bvd$, on $PV^{*,*}(\check{X}_0)[1]$. 
\begin{prop}
For $\check{\varphi} \in PV^{*,*}(\check{X}_0)$, we have
\begin{equation}
d(e^{\check{\varphi}} \dashv \check{\Omega})=0 \Longleftrightarrow D\check{\varphi} + \half [\check{\varphi},\check{\varphi}] = 0.
\end{equation}
\end{prop}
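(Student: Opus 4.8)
The plan is to transport the de Rham differential $d=\partial+\pdb$, acting on the form $e^{\check{\varphi}}\dashv\check{\Omega}$, into purely algebraic operations on the polyvector field $\check{\varphi}$ itself, exploiting that contraction against the nowhere-vanishing holomorphic volume form $\check{\Omega}$ is a fibrewise isomorphism $PV^{i,j}(\check{X}_0)\xrightarrow{\sim}\Omega^{n-i,j}(\check{X}_0)$, and hence an injection on all of $PV^{*,*}(\check{X}_0)$. Once the differential is translated this way, the statement becomes a standard Tian--Todorov-type lemma for the dgBV algebra, together with the invertibility of $e^{\check{\varphi}}$.

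The first step is the compatibility
$$
d(\psi\dashv\check{\Omega})=(D\psi)\dashv\check{\Omega},\qquad D=\pdb+\bvd_{\check{\Omega}},
$$
for every polyvector field $\psi$. The $(1,0)$-part is precisely the definition of the BV operator (Definition \ref{BVdifferential}), $\partial(\psi\dashv\check{\Omega})=(\bvd_{\check{\Omega}}\psi)\dashv\check{\Omega}$. For the $(0,1)$-part, write $\psi=\psi^I_J\,\bar{\delta}^J\wedge\partial_I$ in a local holomorphic frame; since $\check{\Omega}$ is holomorphic, $\partial_I\dashv\check{\Omega}$ is a holomorphic form (in particular $\pdb$-closed) and $\pdb\bar{\delta}^J=0$, whence $\pdb(\psi\dashv\check{\Omega})=(\pdb\psi)\dashv\check{\Omega}$ with $\pdb$ the Dolbeault operator on $PV^{*,*}(\check{X}_0)$. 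Summing these and extending over the exponential series — read formally, which is legitimate in the pro-nilpotent setting in which the statement is applied, and where in any case $e^{\check{\varphi}}$ is a finite sum once $\check{\varphi}$ has no $PV^{0,0}$-component — yields $d(e^{\check{\varphi}}\dashv\check{\Omega})=(D\,e^{\check{\varphi}})\dashv\check{\Omega}$. By injectivity of $\psi\mapsto\psi\dashv\check{\Omega}$, the left-hand side vanishes if and only if $D\,e^{\check{\varphi}}=0$ in $PV^{*,*}(\check{X}_0)$.

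It then remains to prove the algebraic identity
$$
D\,e^{\check{\varphi}}=\Bigl(D\check{\varphi}+\half[\check{\varphi},\check{\varphi}]\Bigr)\wedge e^{\check{\varphi}} .
$$
Its $\pdb$-part, $\pdb e^{\check{\varphi}}=(\pdb\check{\varphi})\wedge e^{\check{\varphi}}$, is immediate since $\pdb$ is a derivation of $\wedge$. For the $\bvd$-part, $\bvd e^{\check{\varphi}}=\bigl(\bvd\check{\varphi}+\half[\check{\varphi},\check{\varphi}]\bigr)\wedge e^{\check{\varphi}}$, one uses the dgBV axioms together with the definition \eqref{BVderivation} of $\delta_v$ and of the bracket: the failure of $\bvd$ to be a derivation is governed by $[\cdot,\cdot]$, so $\bvd(\check{\varphi}\wedge\psi)=(\bvd\check{\varphi})\wedge\psi\pm\check{\varphi}\wedge\bvd\psi\pm[\check{\varphi},\psi]$, and, because $\delta_{\check{\varphi}}$ is a graded derivation, $[\check{\varphi},e^{t\check{\varphi}}]=t\,[\check{\varphi},\check{\varphi}]\wedge e^{t\check{\varphi}}$. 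Setting $g(t):=\bvd(e^{t\check{\varphi}})\wedge e^{-t\check{\varphi}}$ and differentiating, the two terms $\check{\varphi}\wedge\bvd(e^{t\check{\varphi}})$ coming from the two summands cancel, leaving $g'(t)=\bvd\check{\varphi}+t\,[\check{\varphi},\check{\varphi}]$; since $g(0)=\bvd(1)=0$, integration gives $g(1)=\bvd\check{\varphi}+\half[\check{\varphi},\check{\varphi}]$, i.e. the claim. Finally $e^{\check{\varphi}}$ is a unit of $PV^{*,*}(\check{X}_0)$ with inverse $e^{-\check{\varphi}}$, so $D\,e^{\check{\varphi}}=0$ if and only if $D\check{\varphi}+\half[\check{\varphi},\check{\varphi}]=0$, which is the asserted equivalence.

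The one genuinely delicate ingredient is the Koszul-sign bookkeeping: the signs in the BV product rule, in the relation $[\check{\varphi},\cdot]=\pm\,\delta_{\check{\varphi}}$, and in the derivation identity for $[\check{\varphi},e^{t\check{\varphi}}]$ must all be tracked so that the two middle terms above really cancel and the bracket enters the final answer with coefficient exactly $+\half$; this is forced by, and consistent with, the sign conventions already fixed in the definitions of $\bvd_{\check{\Omega}}$, of $\delta_v$ in \eqref{BVderivation}, and of $[\cdot,\cdot]$. I expect that sign-tracking, rather than any conceptual step, to be where the real care is needed in a fully detailed write-up.
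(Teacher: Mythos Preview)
The paper does not actually prove this proposition; it is stated immediately after the dgBV definitions as a known fact (implicitly deferring to the cited reference for the dgBV structure on polyvector fields). Your argument is the standard Tian--Todorov route and is sound: transport $d$ through the contraction isomorphism $(-)\dashv\check{\Omega}$ to reduce to $D\,e^{\check{\varphi}}=0$, then compute $\Delta e^{\check{\varphi}}$ from the BV seven-term relation via the $g(t)$ trick.

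Two remarks. First, the multiplicative identity $D e^{\check{\varphi}}=(D\check{\varphi}+\half[\check{\varphi},\check{\varphi}])\wedge e^{\check{\varphi}}$ as you use it needs $\check{\varphi}$ of even degree in the $j-i$ grading (so that powers of $\check{\varphi}$ commute and the $g(t)$ cancellation goes through). For genuinely odd $\check{\varphi}$ one has $\check{\varphi}\wedge\check{\varphi}=0$, hence $e^{\check{\varphi}}=1+\check{\varphi}$ and $D e^{\check{\varphi}}=D\check{\varphi}$, whereas $[\check{\varphi},\check{\varphi}]$ need not vanish; the equivalence then fails as literally stated. This is really an implicit hypothesis in the proposition (the paper only applies it to $\check{\varphi}\in PV^{0,0}\oplus PV^{1,1}$, which has $j-i=0$), and you should make that hypothesis explicit.

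Second, your caveat about Koszul signs is well placed and not merely cosmetic. With the paper's convention $[v,w]=(-1)^{|v|+1}\delta_v(w)$ and $|\check{\varphi}|=0$ one has $[\check{\varphi},\cdot]=-\delta_{\check{\varphi}}(\cdot)$, and following your $g(t)$ computation literally yields $g(1)=\Delta\check{\varphi}-\tfrac{1}{2}[\check{\varphi},\check{\varphi}]$. Whether the announced shift to $PV^{*,*}[1]$ is meant to absorb this sign, or whether one of the displayed conventions is simply off by a sign, is exactly the bookkeeping you flag; the mathematics of your argument is correct either way.
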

\subsubsection{Fourier transform of dgBV structures}
We can extend the Fourier transform \ref{Fouriertransform} to polyvector fields, sections of the exterior power of the conjugated isotropic subbundle $\Gamma(\check{X}_0,\wedge^*\bar{E}_{\check{\Omega}})$. Observe that there is an identification 
$$
\pi^*(\bigwedge^*\bar{E}_{\exp(\beta+i\omega)}) \cong \check{\pi}^* (\bigwedge^*\bar{E}_{\check{\Omega}})
$$
on $\mathcal{L}X_0 \times_{B_0} \check{X}_0$ where $\pi$ and $\check{\pi}$ are the projections to each copy. We can further identify the $\bigwedge^*\bar{E}_{\exp(\beta+i\omega)} \cong \bigwedge^* (T^*X_0)_\comp$ via the natural projection $(TX_0 \oplus T^*X_0)_\comp \rightarrow (T^*X_0)_\comp$. Combining these we obtain an identification $\rho^{-1} : \check{\pi}^*(\bigwedge^* \bar{E}_{\check{\Omega}}) \longleftrightarrow \pi^*(\bigwedge^* (T^*X_0)_\comp)$ which is explicitly given by
\begin{align*}
 \rho^{-1}(\partial_j) &= \frac{i}{4\pi } \Big( dy_j-i\lam^{-1} \beta^k_j dy_k\Big)\\
 \rho^{-1}(\bar{\delta}^j) & = (2\pi )\Big( \lam dx^j +i\beta^j_k dx^k \Big). 
\end{align*}

Similar to the Fourier transform \ref{Fouriertransform}, we can define
\begin{equation}\label{Extendedtransform}
\mathcal{F}:\Omega^{*,*}_{cf}(\mathcal{L}X_0) \longleftrightarrow PV^{*,*}(\check{X}_0),
\end{equation}
where the subscript "cf" refering to constant along fiber with respect to the natural torus fibration $\mathcal{L}X_0 \rightarrow \Lambda^*$. Here the bidegree in $\Omega^{*,*}_{cf}(\mathcal{L}X_0)$ comes from a splitting of $T^*\mathcal{L}X_0$ as forms along the fiber and forms coming from the base with respect to the fibration $\mathcal{L}X_0 \rightarrow \Lambda^*$. 

The Fourier transform \ref{Extendedtransform} is compatible with the Fourier Mukai transform \ref{FourierMukaidef} in the sense that we have a communtative diagram
\[
\xymatrix{ \Omega^{*,*}_{cf}(\mathcal{L}X_0) \ar[rr]^{\mathcal{F}} \ar[dd]^{\wedge e^{\beta+i\omega}} & & PV^{*,*}(\check{X}_0) \ar[dd]^{\dashv \check{\Omega}}\\
	& & \\
	\Omega^{*}_{cf}(\mathcal{L}X_0) \ar[rr]^{\mathcal{F}_{\mathcal{M}}}& &\Omega^*(\check{X}_0).}
\]
We notice that $\mathcal{F}$ is exterior algebra homomorphism and $\mathcal{F}_{\mathcal{M}}$ transforms the differential operator $d$ on $\Omega^*(\check{X}_0)$ to the equivariant differential operator $d+\iota_{\dot{\gamma}}$ on $\Omega^*_{cf}(\mathcal{L}X_0)$ with respect to the natural $S^1$ action on $\mathcal{L}X_0$ (Here $\dot{\gamma}$ refers to the vector field on $\mathcal{L}X_0$ which is given by tangent vector of $2\pi i \dot{\gamma}(0)$, at the point $\gamma$ in the loop space $\mathcal{L}X_0$). Combining these facts, we have the following proposition.
\begin{prop}
Suppose $\mathcal{F}(\varphi) = \check{\varphi}$, then we have
\begin{equation}
(d+\iota_{\dot{\gamma}}) e^{\varphi+\beta+i\omega} = 0 \Longleftrightarrow d(e^{\check{\varphi}} \dashv\check{\Omega})=0
\end{equation}
which relates the equations describing deformation of pure spinors for a mirror pairs $(X_0,e^{\beta+i\omega})$ and $(\check{X}_0,\check{\Omega})$.
\end{prop}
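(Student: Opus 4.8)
The plan is to deduce the equivalence by transporting both sides of the identity through the Fourier--Mukai transform $\mathcal{F}_{\mathcal{M}}$, using the commutative diagram established just above together with the two structural facts recorded there: that $\mathcal{F}$ is a (unital) exterior algebra homomorphism, and that $\mathcal{F}_{\mathcal{M}}$ intertwines the de Rham differential $d$ on $\Omega^*(\check{X}_0)$ with the equivariant differential $d + \iota_{\dot{\gamma}}$ on $\Omega^*_{cf}(\mathcal{L}X_0)$. The argument is then essentially a diagram chase once the objects are interpreted correctly.

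First I would identify $\mathcal{F}_{\mathcal{M}}(e^{\varphi+\beta+i\omega})$ with $e^{\check{\varphi}}\dashv\check{\Omega}$. Since $\beta + i\omega$ is a $2$-form it is central in the exterior algebra, so $e^{\varphi + \beta + i\omega} = e^{\varphi}\wedge e^{\beta+i\omega}$; because $\mathcal{F}$ is a unital algebra homomorphism, $\mathcal{F}(e^{\varphi}) = e^{\mathcal{F}(\varphi)} = e^{\check{\varphi}}$, and the commutative diagram gives $\mathcal{F}_{\mathcal{M}}(e^{\varphi}\wedge e^{\beta+i\omega}) = \mathcal{F}(e^{\varphi})\dashv\check{\Omega} = e^{\check{\varphi}}\dashv\check{\Omega}$. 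Here one must take a little care that $\varphi$, and hence the series $e^{\varphi}$, lies in the domain on which $\mathcal{F}$ and $\mathcal{F}_{\mathcal{M}}$ are defined and multiplicative --- e.g. by restricting to forms of rapid decay along the $\Lambda^*$-fibres, or by working formally in a deformation parameter --- so that the exponential makes sense term by term and the homomorphism property passes to it. The constants and signs in the definition of $\mathcal{F}_{\mathcal{M}}$ (such as the $(-1)^{n(n+1)/2}$ prefactor) are irrelevant for the vanishing statements.

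Next I would apply $\mathcal{F}_{\mathcal{M}}$, which is an isomorphism, to $(d+\iota_{\dot{\gamma}})e^{\varphi+\beta+i\omega}$. Using the intertwining property one gets $\mathcal{F}_{\mathcal{M}}\big((d+\iota_{\dot{\gamma}})e^{\varphi+\beta+i\omega}\big) = d\,\mathcal{F}_{\mathcal{M}}(e^{\varphi+\beta+i\omega}) = d(e^{\check{\varphi}}\dashv\check{\Omega})$, and since $\mathcal{F}_{\mathcal{M}}$ is injective the left-hand side vanishes if and only if the right-hand side does. This yields both directions of the asserted equivalence simultaneously; combined with the preceding Proposition it also matches the Maurer--Cartan description $D\check{\varphi}+\tfrac12[\check{\varphi},\check{\varphi}]=0$ on the mirror.

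The main obstacle I expect is not the chase itself but justifying the inputs in the genuinely non-semi-flat setting: one needs the intertwining $\mathcal{F}_{\mathcal{M}}\circ(d+\iota_{\dot{\gamma}}) = d\circ\mathcal{F}_{\mathcal{M}}$ to persist for forms carrying nontrivial higher Fourier modes along the fibres (the earlier discussion is phrased for semi-flat spinors), together with convergence of the fibrewise Fourier-series expansion in the relevant function space; relatedly one should check that $d+\iota_{\dot{\gamma}}$ is an honest differential on $\Omega^*_{cf}(\mathcal{L}X_0)$, which holds precisely because the ``cf'' forms are invariant under the circle action generated by $\dot{\gamma}$, so that $(d+\iota_{\dot{\gamma}})^2 = \mathcal{L}_{\dot{\gamma}}$ vanishes on that subcomplex. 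Once these analytic points are secured, the proposition follows formally from the diagram.
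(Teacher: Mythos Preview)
Your proposal is correct and follows essentially the same approach as the paper. The paper does not give a detailed proof: it records the commutative diagram, the fact that $\mathcal{F}$ is an exterior algebra homomorphism, and that $\mathcal{F}_{\mathcal{M}}$ intertwines $d$ with $d+\iota_{\dot{\gamma}}$, and then simply writes ``Combining these facts, we have the following proposition.'' Your diagram chase is exactly the intended expansion of that sentence, and your cautionary remarks on convergence and the role of the ``cf'' condition for $(d+\iota_{\dot{\gamma}})^2=0$ are apt but go beyond what the paper spells out.
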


\begin{remark}
The quantum corrections on A-side lives on $\mathcal{L}X_0$ since it involves holomorphic disks instantons which are related to gradient flow trees, or the Morse theory, on the loop space $\mathcal{L} X_0$ with respect to the area functional. These informations on the loop space $\mathcal{L}X_0$ may be treated as "quantum" deformation $\varphi$ of the original $e^{\beta+i\omega}$, and the above equation simply means that $e^{\varphi+\beta+i\omega}$ is equivariantly closed. Although the term $e^{\varphi+\beta+i\omega}$ does not live on the original space $X_0$, the geometric meaning of it will become clearer when we treat it as a central charge on semi-flat branes.
\end{remark}

\subsubsection{Deformation of Central charge}
We will be considering the semi-flat K\"ahler manifold $(X_0,e^{\beta+i\omega},\Omega)$ in this subsection, and treating the pure spinor $e^{\beta+i\omega}$ as a central charge on the Derived category of coherent sheaf $D^b Coh(X_0)$ with respect to $\Omega$ by the formula
$$
Z(E^{\bullet}) = \int_{X_0} ch(E) e^{\beta+i\omega},
$$
for a complex vector bundle $E$. 

\begin{remark}
We assume $B_0$ is compact (i.e. there is no singular fiber in the torus fibration) for the purpose of integration, and we expect similar formula will be valid when the integration over $X_0$ is suitably defined.
\end{remark}

In the following discussion, we will restrict our attention to semi-flat B-branes $(E,\nabla)$ as defined in \cite{clm0}. 
\begin{definition}
A semi-flat B-brane $(E,\nabla)$ on $(X_0,e^{\beta+i\omega},\Omega)$ is a holomorphic vector bundle $E$ with a holomorphic connection $\nabla$ (i.e. $(\nabla^{0,1})^2=0$), such that $(E,\nabla)|_{p^{-1}(U)}$ can be equipped with some $T^n$-action compatible with the trivial action on $p^{-1}(U) \cong U \times T^n$, for every contractible $U \subset B_0$. 
\end{definition}
Upon pulling back through natural evaluation map $ev : \mathcal{L}X_0 \rightarrow X_0$, the semi-flat condition enable us to equip $ev^{*}(E,\nabla)$ with $S^1$-action compatible with $S^1 \curvearrowright \mathcal{L}X_0$. This allows us to define the equivariant connection $\nabla_{eq} = \nabla + \iota_{\dot{\gamma}}$ and hence the equivariant Chern character as
$$
ch_{eq}(E,\nabla) = tr(e^{\frac{i}{2\pi}(\nabla_{eq}^2 - \mathcal{L}_{\dot{\gamma}})}).
$$

We interpret the deformation $e^{\varphi+\beta+i\omega}$ on $\mathcal{L}X_0$ as a deformation of the above central charge, which is given by
\begin{equation}\label{deformedcentralcharge}
Z(E,\nabla) = \int_{\gamma \in \mathcal{L}X_0} ch_{eq}(E,\nabla) e^{\varphi+\beta+i\omega}, 
\end{equation} 
for a semi-flat B-brane $(E,\nabla)$ on $(X_0,e^{\beta+i\omega},\Omega)$. A Chern-Weil type argument for equivariant characteristic classes gives us the following statement.

\begin{prop}
Assuming $\varphi$ satisfy the Maurer-Cartan equation $(d+\iota_{\dot{\gamma}}) (e^{\varphi+\beta+i\omega})=0$, then the deformed central charge $Z(E,\nabla)$ is independent of choice of semi-flat holomorphic connection $\nabla$.
\end{prop}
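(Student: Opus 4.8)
\emph{Proof proposal.} The statement is an instance of the equivariant Chern--Weil principle in the Cartan model for the $S^1$-action on $\mathcal{L}X_0$ generated by $\dot{\gamma}$, whose equivariant differential is precisely $d_{eq} := d+\iota_{\dot{\gamma}}$ appearing in the Maurer--Cartan equation. Set $c:=\tfrac{i}{2\pi}$. First I would reduce to a one-parameter family: since $E$ carries a fixed holomorphic structure $\bar{\partial}_E$, the semi-flat holomorphic connections on $E$ form an affine space (over the $T^n$-invariant $\mathrm{End}(E)$-valued $(1,0)$-forms on $X_0$), because both the holomorphicity condition $(\nabla^{0,1})^2=0$ and the semi-flat $T^n$-equivariance over contractible $U\subset B_0$ are linear and hence preserved under affine combinations. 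Thus, given two such connections $\nabla_0,\nabla_1$, the path $\nabla_t=\nabla_0+t\alpha$ with $\alpha:=\nabla_1-\nabla_0$ stays admissible, and it suffices to show $\tfrac{d}{dt}Z(E,\nabla_t)=0$.

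Next comes the equivariant transgression step. Let $F_{eq,t}=\nabla_{eq,t}^2-\mathcal{L}_{\dot{\gamma}}$ be the equivariant curvature of $\nabla_{eq,t}=\nabla_t+\iota_{\dot{\gamma}}$, so that $ch_{eq}(E,\nabla_t)=\mathrm{tr}\big(e^{cF_{eq,t}}\big)$. The equivariant Bianchi identity gives $\nabla_{eq,t}\big(e^{cF_{eq,t}}\big)=0$, hence $ch_{eq}$ is $d_{eq}$-closed, and one has $\tfrac{d}{dt}F_{eq,t}=\nabla_{eq,t}\alpha$. The usual transgression computation — differentiating the exponential, using cyclicity of the trace, and Bianchi — then yields
\[
\frac{d}{dt}\,ch_{eq}(E,\nabla_t)= c\,(d+\iota_{\dot{\gamma}})\,\mathrm{tr}\!\left(\alpha\,e^{cF_{eq,t}}\right).
\]
Because $(E,\nabla_t)$ is a semi-flat B-brane, $ev^{*}ch_{eq}(E,\nabla_t)$ and $ev^{*}\mathrm{tr}(\alpha\,e^{cF_{eq,t}})$ are pulled back from $X_0$ along $ev:\mathcal{L}X_0=X_0\times_{B_0}\Lambda^{*}\to X_0$, hence constant along the $\Lambda^{*}$-directions; combined with the rapid decay of the Fourier modes of $\varphi$ this makes all the integrals below absolutely convergent and justifies differentiating under the integral sign (the integrand is polynomial in $t$).

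Then I would integrate by parts using the Maurer--Cartan hypothesis. Differentiating \eqref{deformedcentralcharge} and substituting the transgression formula,
\[
\frac{d}{dt}Z(E,\nabla_t)= c\int_{\gamma\in\mathcal{L}X_0}(d+\iota_{\dot{\gamma}})\,\mathrm{tr}\!\left(\alpha\,e^{cF_{eq,t}}\right)\wedge e^{\varphi+\beta+i\omega}.
\]
Since $d+\iota_{\dot{\gamma}}$ is an odd derivation and $(d+\iota_{\dot{\gamma}})(e^{\varphi+\beta+i\omega})=0$ by assumption, the integrand equals $\pm(d+\iota_{\dot{\gamma}})\big(\mathrm{tr}(\alpha\,e^{cF_{eq,t}})\wedge e^{\varphi+\beta+i\omega}\big)$. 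Writing $\int_{\mathcal{L}X_0}$ as $\sum_{\lambda\in\Lambda^{*}}\int_{X_0}(\,\cdot\,)_\lambda$, the $d$-part of each summand integrates to zero by Stokes' theorem (each sheet $X_0\times\{\lambda\}$ is compact without boundary, using the assumed compactness of $B_0$), and the $\iota_{\dot{\gamma}}$-part integrates to zero because $\dot{\gamma}$ is tangent to the $2n$-dimensional sheet $X_0\times\{\lambda\}$, so its contraction of any form has vanishing top-degree component. Therefore $\tfrac{d}{dt}Z(E,\nabla_t)=0$, and $Z$ is independent of the semi-flat holomorphic connection.

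\emph{Main obstacle.} The delicate point is the interplay in the transgression and integration-by-parts steps: the deformation $\varphi$ is \emph{not} semi-flat — it carries nontrivial higher Fourier modes along the torus fibers — so one must genuinely verify that the products $\mathrm{tr}(\alpha\,e^{cF_{eq,t}})\wedge e^{\varphi+\beta+i\omega}$ and their $d_{eq}$-exact rearrangements are integrable over $\mathcal{L}X_0$ and that Stokes' theorem and differentiation under the integral sign are legitimate. This is exactly where the rapid-decay ($\Omega^{*}_{0}$) condition built into $\varphi$ and the semi-flatness of $(E,\nabla)$ (forcing $ev^{*}ch_{eq}$ to be pulled back from $X_0$) do the essential work; once these are in place the rest is bookkeeping of signs and degrees. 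If the compactness of $B_0$ is dropped, one additionally has to control the behavior of the forms near $B^{sing}$ so that the boundary terms in Stokes still vanish.
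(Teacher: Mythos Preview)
Your argument is correct and is precisely the ``Chern--Weil type argument for equivariant characteristic classes'' that the paper invokes in the sentence immediately preceding the proposition; the paper itself does not spell out the details. Your write-up supplies exactly the standard equivariant transgression and Stokes/degree steps that the paper leaves implicit, so there is nothing to add.
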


It means that the central charge $Z$ can be decended as $Z: K^{sf}(X_0)\rightarrow \comp$ (here the superscript \textit{sf} refers to K-group of semi-flat bundles), which is mirrored to the fact that 
$$
\int_{\check{L}} (e^{\check{\varphi}} \dashv \check{\Omega}) 
$$
is independent of choice of Lagrangian $\check{L}$ up to Hamiltonian equivalent if $d(e^{\check{\varphi}} \dashv \check{\Omega})$. The case for a semi-flat line bundle $(E,\nabla)$ can be obtained directly by transforming the mirror statement when integrating $e^{\check{\varphi}}\dashv \check{\Omega}$ on Lagrangian section, while that for a general vector bundle requires a proof as the SYZ transform for a semi-flat vector bundle is not known yet.

The deformed central charge $Z$ should play the role of a central charge in the sense of Bridgeland's stability, and the further investigation on how $Z$ incorporates with Bridgeland's stability, especially in the presence of singular locus on $B_0$ will be an interesting topic to study.

%-------------------------------Witten-Morse theory----------------------------------------------------
\section{Witten deformation and Morse category}\label{setting}
In this section, we will describe briefly the result proven in \cite{klchan-leung-ma}. We begin by introducing the notations and definitions needed to state the theorem. 
%-------------------------deRham category---------------------------------
\subsection{deRham category}
Given a compact oriented Riemannian manifold $M$\footnote{We considered $M$ being the integral affine manifold $B_0$ in the Introduction \ref{sec:introduction} to related it to Fukaya's reconstruction proposal, while the result holds for general $M$}, we can construct the deRham category $\dr{M}$ depending on a small real parameter $\lam$. Objects of the category are smooth functions $f_i$'s on $M$. 
For any two objects $f_i$ and $f_j$, we define the space of morphisms between them to be\[
\Hom^*_{\dr{M}}(f_i,f_j) = \Omega^*(M),\]
with differential $d+\lam df_{ij}\wedge$, where $f_{ij}:=f_j-f_i$. The composition of morphisms is defined to be the wedge product of differential forms on $M$. This composition is associative and hence the resulted category is a dg category. We denote the complex corresponding to $\Hom^*_{\dr{M}}(f_i,f_j)$ by $\Omega^*_{ij}(M,\lam)$ and the differential $ d+\lam df_{ij}$ by $d_{ij}$. Next, we then consider the Morse category which is closely related to the deRham category.

%-----------------------------Morse category-------------------------------
\subsection{Morse category}
The Morse category $\morse$ has the same class of objects as the deRham category $\dr{M}$, with the space of morphisms between two objects given by\[
Hom^*_{\morse}(f_i,f_j) = CM^*(f_{ij}) = \sum_{q\in Crit(f_{ij})} \comp \cdot e_q.\]
It is equipped with the Morse differential which is defined when $f_{ij}$ is Morse. In this complex, $e_q$'s are declared to be an orthonormal basis and graded by the Morse index of corresponding critical point $q$, which is the dimension of unstable submanifold $V^-_q$. The Morse category $\morse$ is an $A_{\infty}$-category equipped with higher products $\morprod{k}$ for every $k\in \inte_+$, or simply denoted by $m_k$, which are given by counting gradient flow trees.

\subsubsection{Morse $A_{\infty}$ structure}
We are going to describe the product $m_k$ of the Morse category. First of all, one may notice that the morphisms between two objects $f_i$ and $f_j$ is only defined when $f_{ij}$ is Morse. Therefore, when we consider a sequence of functions $f_0,\ldots,f_k$, we said the sequence is Morse if $f_{ij}$ are Morse for all $i \neq j$. Given a Morse sequence $\vec{f} = (f_0,\dots,f_k)$, with a sequence of points $\vec{q} = (q_{01},\dots q_{(k-1)k},q_{0k})$ such that $q_{ij}$ is a critical point of $f_{ij}$, we will define the notation of gradient flow trees. Before that, we first clarify the definition of a combinatorial tree.

\begin{definition}\label{dtree}
A trivalent directed $d$-leafed tree $T$ means an embedded tree in $\real^2$, together with the following data:
\begin{itemize}
\item[(1)] a finite set of vertices $V(T)$;
\item[(2)] a set of internal edges $E(T)$;
\item[(3)] a set of $d$ semi-infinite incoming edges $E_{in}(T)$;
\item[(4)] a semi-infinite outgoing edge $e_{out}$.
\end{itemize}
Every vertex is required to be trivalent, having two incoming edges and one outgoing edge.
\end{definition}

For simplicity, we will call it a $d$-tree. They are identified up to continuous map preserving the vertices and edges. Therefore, the  topological class for $d$-trees will be finite.

Given a $d$-tree, by fixing the anticlockwise orientation of $\real^2$, we have cyclic ordering of all the semi-infinite edges. We can label the incoming edges by pairs of consecutive integers $(d-1)d, (d-1)(d-2),\dots,01$ and the outgoing edges by $0d$ such that the cyclic ordering $01,\dots,(d-1)d,0d$ agrees with the induced cyclic ordering of $\real^2$. Furthermore, we can extend this labeling to all the internal edges, by induction along the directed tree. If we have an vertex $v$ with two incoming edges labelled $ij$ and $jk$, then we assign labeling $ik$ to the outgoing edge. For example, there are two different topological types for $3$-tree, with corresponding labelings for their edges as shown in the following figure.

\begin{figure}[h]\label{3trees}
\centering
\includegraphics[scale=0.8]{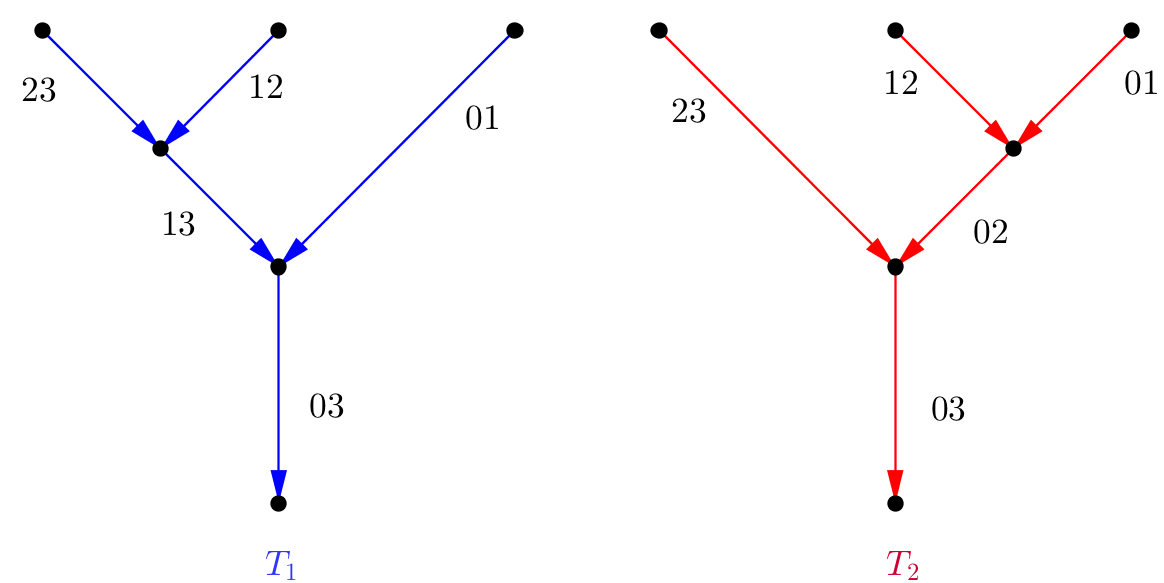}
\caption{two different types of $3$-trees}
\end{figure} 

\begin{definition}
A gradient flow tree $\Gamma$ of $\vec{f}$ with endpoints at $\vec{q}$ is a continuous map $\mathbf{f} :\mtree \rightarrow M$ such that it is a upward gradient flow lines of $f_{ij}$ when restricted to the edge labelled by $ij$, the semi-infinite incoming edge $i(i+1)$ begins at the critical point $q_{i(i+1)}$ and the semi-infinite outgoing edge $0k$ ends at the critical point $q_{0k}$. 
\end{definition}

We use $\mathcal{M}(\vec{f},\vec{q})$ to denote the moduli space of gradient trees (in the case $k=1$, the moduli of gradient flow line of a single Morse function has an extra $\real$ symmetry given by translation in the domain. We will use this notation for the reduced moduli, that is the one after taking quotient by $\real$). It has a decomposition according to topological types\[
\mathcal{M}(\vec{f},\vec{q}) = \coprod_{T} \mathcal{M}(\vec{f},\vec{q})(T).\]

This space can be endowed with smooth manifold structure if we put generic assumption on the Morse sequence as described in \cite{Abouzaid06}. When the sequence is generic, the moduli space $\mathcal{M}(\vec{f},\vec{q})$ is smooth manifold of dimension\[
\dim_{\real}(\mathcal{M}(\vec{f},\vec{q}) )= \deg(q_{0k}) - \sum_{i=0}^{k-1} \deg(q_{i(i+1)}) + k-2,\]
where $\deg(q_{ij})$ is the Morse index of the critical point. Therefore, we can define $m_k^{Morse}$, or simply denoted by $m_k$, using the signed count $\# \mathcal{M}(\vec{f},\vec{q})$ of points in $\dim_{\real}(\mathcal{M}(\vec{f},\vec{q}))$ when it is of dimension $0$ (In that case, it can be shown to be compact. See e.g. \cite{Abouzaid06} for details).

We now give the definition of the higher products in the Morse category.
\begin{definition}
Given a generic Morse sequence $\vec{f}$ with sequence of critical points $\vec{q}$, we define 
$$
m_k : CM_{k(k-1)}^* \otimes \cdots \otimes CM^*_{01} \rightarrow CM^*_{0k}
$$
given by
\begin{equation}
\langle m_k(q_{(k-1)k},\dots,q_{01}), q_{0k}\rangle =  \# \mathcal{M}(\vec{f},\vec{q}),
\end{equation}
when\[
\deg(q_{0k}) - \sum_{i=0}^{k-1} \deg(q_{i(i+1)}) + k-2 = 0.\]
Otherwise, the $m_k$ is defined to be zero.
\end{definition}

One may notice $\morprod{k}$ can only be defined when $\vec{f}$ is a Morse sequence satisfying the generic assumption as in \cite{Abouzaid06}. The Morse category is indeed a $A_\infty$ pre-category instead of an honest category. We will not go into detail about the algebraic problem on getting an honest category from this structures. For details about this, readers may see \cite{Abouzaid06, fukayamorse}.

%-------------------From deRham to Morse-----------------------------
\subsection{From deRham to Morse}
To relate $\dr{M}$ and $\morse$, we need to apply homological perturbation to $\dr{M}$. Fixing two functions $f_i$ and $f_j$, we consider the Witten Laplacian
$$
\Delta_{ij}=d_{ij} d^*_{ij} + d^*_{ij} d_{ij},
$$
where $d_{ij}^* =  d^* + \lam \iota_{\nabla f_{ij}}$. We take the interval $[0,c)$ for some small $c>0$ and denote the span of eigenspaces with eigenvalues contained in $[0,c)$ by $\Omega^*_{ij}(M,\lam)_{sm}$.

\subsubsection{Results for a single Morse function}
We recall the results on Witten deformation for a single Morse function from \cite{HelSj4}, with a few modifications to fit our content.

\begin{definition}\label{agmondistance}
For a Morse function $f_{ij}$, the Agmon distance $\dist_{ij}$, or simply denoted by $\dist$, is the distance function with respect to the degenerated Riemannian metric $\langle \cdot, \cdot \rangle_{f_{ij}} = |df_{ij}|^2 \langle \cdot, \cdot \rangle$, where $\langle \cdot, \cdot \rangle$ is the background metric. 
\end{definition}
Readers may see \cite{HelNi} for its basic properties. We denote the set of critical points by $C^*_{ij}$. For each $q\in C^l_{ij}$ we let\[
M_{q,\eta} = M \setminus \bigcup_{ p \in C^l_{ij}\setminus \{q\}} B(p,\eta),\]
where $B(p,\eta)$ is the open ball centered at $p$ with radius $\eta$ with respect to the Agmon metric. $M_{q,\eta}$ is a manifold with boundary.

For each $q \in C^l_{ij}$, we use $\Omega_{ij}^*(M_{q,\eta},\lam)_0$ to denote the space of differential forms with Dirichlet boundary condition, acting by Witten Lacplacian $\Delta_{ij,q,0}$. We have the following spectral gap lemma, saying that eigenvalues in the interval $[0,c)$ are well separated from the rest of the spectrum.
\begin{lemma}
For any $\eta>0$ small enough, there is $\lam_0=\lam_0(\eta)>0$ and $c, C>0$ such that when $\lam>\lam_0$, we have\[
\Spec(\Delta_{ij,q,0})\cap [c,C \lam) = \emptyset,\]
and also\[
\Spec(\Delta_{ij})\cap [c,C \lam) = \emptyset.\]
\end{lemma}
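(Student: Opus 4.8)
The plan is to run the standard Witten--Helffer--Sj\"ostrand argument: a Bochner--Weitzenb\"ock identity, an IMS-type localization, and an explicit harmonic-oscillator model at each critical point. First I would record $\Delta_{ij} = \nabla^*\nabla + \lam^2|df_{ij}|^2 + \lam K_{ij}$, where $K_{ij}$ is a symmetric endomorphism of $\bigwedge^* T^*M$ built from $\Hess f_{ij}$ and bounded on $M$ (or on $\overline{M_{q,\eta}}$) uniformly in $\lam$. As a quadratic form this already gives $\Delta_{ij} \ge \lam^2|df_{ij}|^2 - C_0\lam$, so on any region where $|df_{ij}|^2 \ge \delta$ one has $\Delta_{ij} \ge \tfrac12\delta\lam^2$ once $\lam$ is large.

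Next I would choose a partition of unity $1 = \chi_0^2 + \sum_{p\in C^*_{ij}}\chi_p^2$ with $\chi_p$ supported in a small Agmon ball $B(p,\eta')$, $\eta'<\eta$, and $|df_{ij}|\ge\delta(\eta')>0$ on $\supp\chi_0$. The IMS identity $\langle\Delta_{ij}u,u\rangle = \langle\Delta_{ij}(\chi_0u),\chi_0u\rangle + \sum_p\langle\Delta_{ij}(\chi_pu),\chi_pu\rangle - \sum_\bullet\||\nabla\chi_\bullet|u\|^2$ has error term $O(\|u\|^2)$, uniformly in $\lam$, and the $\chi_0$-piece is $\gtrsim\lam^2\|\chi_0u\|^2$ by the previous step. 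On a $\chi_p$-piece I would pass to Morse normal coordinates at $p$, rescale $x\mapsto\lam^{-1/2}x$, and compare $\Delta_{ij}$ with the model Witten Laplacian $H_p$ of $\tfrac12\langle\Hess f_{ij}(p)x,x\rangle$ with the frozen metric, the difference being of relative size $O(\eta'+\lam^{-1/2})$. Now $H_p$ is an exactly solvable sum of supersymmetric harmonic oscillators: in degree $l=\mathrm{ind}(p)$ its spectrum is $\{0\}\cup[2\mu_p\lam,\infty)$ with one-dimensional kernel (an explicit Gaussian $l$-form), and in all other degrees it lies in $[2\mu_p\lam,\infty)$, where $\mu_p>0$ is the least eigenvalue of $|\Hess f_{ij}(p)|$. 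Hence, for $\eta'$ small and $\lam$ large, $\langle\Delta_{ij}(\chi_pu),\chi_pu\rangle \ge \mu_p\lam\,\|\chi_pu\|^2$ whenever $\chi_pu$ is (approximately) orthogonal to a fixed truncated ground state $u_p$ supported in $B(p,\eta')$.

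Finally I would assemble by min-max. The quasimodes $\{u_p\}_{p\in C^*_{ij}}$ are almost orthonormal with $\|\Delta_{ij}u_p\|$ exponentially small, so there are at least $N:=\#C^*_{ij}$ eigenvalues of $\Delta_{ij}$ in $[0,c)$ for any fixed $c>0$ once $\lam$ is large; conversely, for $u$ orthogonal to $\mathrm{span}\{u_p\}$, the estimates above together with $\sum_\bullet\chi_\bullet^2=1$ give $\langle\Delta_{ij}u,u\rangle \ge (\min_p\mu_p)\lam\|u\|^2 - C\|u\|^2 \ge \tfrac12(\min_p\mu_p)\lam\|u\|^2$, and a routine perturbation (using the exponential smallness of $\|\Delta_{ij}u_p\|$ to pass from $\mathrm{span}\{u_p\}$ to the genuine small-eigenvalue space) shows the $(N+1)$-st eigenvalue is $\ge C\lam$ with $C:=\tfrac12\min_p\mu_p$. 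This yields $\Spec(\Delta_{ij})\cap[c,C\lam)=\emptyset$. The Dirichlet operator $\Delta_{ij,q,0}$ on $M_{q,\eta}$ is treated identically: the Dirichlet condition only contributes positivity near $\partial M_{q,\eta}$, where $|df_{ij}|\gtrsim\sqrt{\eta}>0$ since for $\eta$ small the Agmon balls are disjoint and thin, and since $M_{q,\eta}$ retains only the index-$l$ critical point $q$ there is a single degree-$l$ quasimode --- but in any case the window $[c,C\lam)$ contains no spectrum.

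The main obstacle is the near-critical-point comparison: one must show that the supersymmetric oscillator $H_p$ genuinely has an $O(\lam)$ gap above its kernel (and no spectrum near $0$ in degrees $\ne l$), and that the Taylor remainders of $f_{ij}$ and of the metric on the scale $\eta'$ are uniformly dominated by that gap. This forces a careful ordering of the parameters --- first fix $\eta$, then $\eta'=\eta'(\eta)$, then $\lam_0=\lam_0(\eta,\eta')$ --- and it is precisely where the non-degeneracy of the critical points of $f_{ij}$ enters.
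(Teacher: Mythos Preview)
Your argument is correct and is precisely the Helffer--Sj\"ostrand strategy that the paper is invoking; note that the paper is a survey and does not prove this lemma itself but simply records it from \cite{HelSj4}, so your Bochner--Weitzenb\"ock/IMS/harmonic-oscillator/min-max outline is exactly the intended route. One small inaccuracy: $M_{q,\eta}$ removes only the \emph{index-$l$} critical points other than $q$, so critical points of other indices remain and contribute further small eigenvalues in their own degrees --- but this does not affect the spectral-gap statement, which is all that is claimed here.
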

The eigenforms with corresponding eigenvalue in $[0,c)$ are what we concentrated on, and we have the following decay estimate for them.
\begin{lemma}
For any $\epsilon$, $\eta>0$ small enough, we have $\lam_0=\lam_0(\epsilon,\eta)>0$ such that when $\lam>\lam_0$, $\Delta_{ij,q,0}$ has one dimensional eigenspace in $[0,c)$. If we let $\varphi_q\in\Omega_{ij}^*(M_{q,\eta},\lam)_0$ be the coresponding unit length eigenform, we have
\begin{equation}\label{eigenestimate1}
\varphi_q  = \mathcal{O}_\epsilon(e^{-\lam(\dist_{ij}(q,x)-\epsilon)}),
\end{equation}
where $\mathcal{O}_{\epsilon}$ stands for $C^{0}$ bound with a constant depending on $\epsilon$. Same estimate holds for $d_{ij} \varphi_q$ and $d^*_{ij} \varphi_q$ as well.
\end{lemma}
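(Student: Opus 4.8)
The plan is to reprove the classical Helffer--Sjöstrand Agmon estimate for the Witten Laplacian in the stated form, working with the Dirichlet realization $\Delta_{ij,q,0}$ on $M_{q,\eta}$. I would split the argument into three stages: (i) identifying the one-dimensional small-eigenvalue space via a local model at $q$; (ii) a weighted $L^2$ Agmon estimate with the exponential weight built from $\dist_{ij}(q,\cdot)$; and (iii) a passage from the $L^2$ to the $C^0$ bound by rescaled elliptic regularity, where the various polynomial-in-$\lam$ losses get absorbed into the $\epsilon$ in the exponent.

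\textit{Step 1: the small eigenvalue.} Apply the Morse lemma to get coordinates near $q$ with $f_{ij} = f_{ij}(q) - \half\sum_{a\le l}x_a^2 + \half\sum_{a>l}x_a^2$. In degree $l$ the leading part of $\Delta_{ij}$ in these coordinates is a harmonic oscillator whose ground state is an explicit Gaussian $l$-form; multiplying it by a cutoff supported in $B(q,\eta)$ gives a quasimode $u_q$ with $\|\Delta_{ij,q,0}u_q\| = \mathcal{O}(e^{-c\lam})\|u_q\|$, so there is at least one eigenvalue in $[0,c)$. For the upper bound on multiplicity one uses Dirichlet--Neumann bracketing between $B(q,\eta)$ and its complement in $M_{q,\eta}$: the model on the ball contributes exactly one eigenvalue below $c$ in degree $l$, while on the complement there is no critical point of index $l$ at all, so — combined with the spectral gap lemma already recorded — no small eigenvalue survives there in degree $l$. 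Hence the dimension is $1$; write $\varphi_q$ for the unit eigenform and $\mu = \mathcal{O}(e^{-c\lam})$ for its eigenvalue.

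\textit{Step 2: weighted estimate.} Let $\Phi$ be a smooth $\epsilon$-approximation of $(1-\epsilon)\dist_{ij}(q,\cdot)$; since $\dist_{ij}$ is the distance for the degenerate metric $|df_{ij}|^2\langle\cdot,\cdot\rangle$ one has $|\nabla\Phi|\le(1-\epsilon)|df_{ij}|$ a.e., hence $|df_{ij}|^2 - |\nabla\Phi|^2 \ge c_\epsilon|df_{ij}|^2$. The Agmon/exponential-weight identity for $\Delta_{ij}$ (using the Weitzenböck decomposition $\Delta_{ij} = \nabla^*\nabla + \lam^2|df_{ij}|^2 + \lam R$ with $R$ a zeroth-order, uniformly bounded curvature term, and the Dirichlet condition on $\partial M_{q,\eta}$, which contributes a boundary term of favourable sign) yields
$$
\mu\,\|e^{\lam\Phi}\varphi_q\|^2 \;\ge\; \lam^2\!\int_{M_{q,\eta}}\!\big(|df_{ij}|^2 - |\nabla\Phi|^2\big)e^{2\lam\Phi}|\varphi_q|^2 \;-\; C\lam\,\|e^{\lam\Phi}\varphi_q\|^2 .
$$
Splitting $M_{q,\eta}$ into the complement of small balls $B(p,\delta)$ around the critical points of $f_{ij}$ (where $|df_{ij}|^2 \ge c_\delta$, so the first term dominates $C\lam\|e^{\lam\Phi}\varphi_q\|^2$ once $\lam$ is large) and those balls themselves, and using the a priori exponential smallness of $\varphi_q$ near each critical point of index $\ne l$ (again from the harmonic-oscillator model, where degree-$l$ forms see a spectral gap bounded away from $0$) to control $\int_{B(p,\delta)}e^{2\lam\Phi}|\varphi_q|^2$, one absorbs and bootstraps to obtain
$$
\int_{M_{q,\eta}} e^{2\lam(1-\epsilon)\dist_{ij}(q,\cdot)}\,|\varphi_q|^2 \;\le\; C\,e^{C\lam\epsilon},
$$
with the same bound for $d_{ij}\varphi_q$ and $d_{ij}^*\varphi_q$, since $d_{ij}$ and $d_{ij}^*$ commute with $\Delta_{ij}$, so these satisfy the same eigenvalue equation.

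\textit{Step 3: from $L^2$ to $C^0$, and the main obstacle.} Fix $x$ and rescale coordinates by $\lam^{1/2}$ on a ball of radius $\sim\lam^{-1/2}$ about $x$; there $\lam^{-1}\Delta_{ij}$ is uniformly elliptic with $C^k$-bounded coefficients, so interior elliptic estimates and Sobolev embedding give $|\varphi_q(x)| + |d_{ij}\varphi_q(x)| + |d_{ij}^*\varphi_q(x)| \le \lam^N\|\varphi_q\|_{L^2(B(x,\lam^{-1/2}))}$ for a fixed $N$. Feeding in Step 2 together with the $\mathcal{O}(\lam^{-1/2})$ oscillation of $\dist_{ij}(q,\cdot)$ over that ball, the factor $\lam^N$ and all lower-order errors are $\le e^{\lam\epsilon}$ for $\lam$ large, and after relabelling $\epsilon$ this is precisely \eqref{eigenestimate1}. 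The genuinely delicate points, both handled in \cite{HelSj4}, are: the non-smoothness of $\dist_{ij}$ at its cut locus (which forces one to work with Lipschitz mollifications of $(1-\epsilon)\dist_{ij}$ obeying the sharp gradient bound, using the eikonal properties recorded in \cite{HelNi}), and the bootstrap controlling $\varphi_q$ on neighbourhoods of the \emph{other} critical points — it is this interplay between ``exponential smallness near critical points'' and ``global Agmon decay'' that is the heart of the estimate, rather than the weighted identity itself.
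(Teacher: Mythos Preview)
The paper does not supply its own proof of this lemma: it is stated in Section~4.3.1 under the heading ``Results for a single Morse function'' with the explicit remark that these are results recalled from \cite{HelSj4}. Your proposal reconstructs precisely the Helffer--Sj\"ostrand argument --- quasimode from the harmonic-oscillator model, weighted Agmon identity with a Lipschitz regularization of $(1-\epsilon)\dist_{ij}(q,\cdot)$, then rescaled elliptic regularity to pass to $C^0$ --- so you are reproducing the cited proof rather than offering an alternative.

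One small imprecision worth tightening: in Step~2 you invoke ``a priori exponential smallness of $\varphi_q$ near each critical point of index $\ne l$'' to control the bad balls. As written this is circular, since that smallness is exactly what the Agmon estimate is meant to establish. The actual mechanism in \cite{HelSj4} is that on degree-$l$ forms the zeroth-order term $\lam R$ in the Weitzenb\"ock decomposition is bounded below by a positive multiple of $\lam$ near any critical point of index $\ne l$; this positivity (not any prior decay of $\varphi_q$) is what lets you absorb the $C\lam\|e^{\lam\Phi}\varphi_q\|^2$ error over those balls directly in the quadratic-form inequality. With that correction the argument goes through.
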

We are now ready to give the definition of $\phi_{ij}(\eta,\lam)$. For each critical point $p$, we take a cut off function $\theta_p$ such that $\theta_p \equiv 1$ in $\overline{B(p,\eta)}$ and compactly supported in $B(p,2\eta)$. Given a critical point $q \in C^l$, we let \[
\chi_q = 1-\sum_{p \in C^l \setminus \{q\}}\theta_p.\]
\begin{prop}
For $\eta>0$ small enough, there exists $\lam_0=\lam_0(\eta)>0$, such that when $\lam>\lam_0$, we have a linear isomorphism\[
\hat{\phi}_{ij} = \hat{\phi}_{ij}(\eta,\lam):
CM^*(f_{ij})\rightarrow\Omega^*_{ij}(M,\lam)_{sm}\]
defined by
\begin{equation}
\hat{\phi}_{ij}(\eta,\lam) (q) = P_{ij} \chi_q \varphi_q,
\end{equation}
where $P_{ij} : \Omega^*_{ij}(M,\lam) \rightarrow \Omega^*_{ij}(M,\lam)_{sm}$ is the projection to the small eigenspace.
\end{prop}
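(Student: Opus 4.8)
The plan is to show that, for each degree $l$, the $\#C^l_{ij}$ vectors $\hat{\phi}_{ij}(e_q) = P_{ij}(\chi_q\varphi_q)$ with $q\in C^l_{ij}$ form a basis of the small eigenspace $\Omega^l_{ij}(M,\lam)_{sm}$ once $\lam$ is large enough. Since $\dim CM^l(f_{ij}) = \#C^l_{ij}$ by definition, and $\dim\Omega^l_{ij}(M,\lam)_{sm} = \#C^l_{ij}$ as part of the Witten deformation picture recalled from \cite{witten82,HelSj4}, it is enough to prove that these vectors are linearly independent. Note first that they respect the grading: for $q$ of Morse index $l$ the eigenform $\varphi_q$ is an $l$-form, $\chi_q$ is a function, and $P_{ij}$ preserves degree, so $\hat{\phi}_{ij}(e_q)\in\Omega^l_{ij}(M,\lam)_{sm}$.

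The first step is to control how far the quasimode $\chi_q\varphi_q$ lies from its projection. Extended by zero, $\chi_q\varphi_q$ is a global form on $M$ (as $\chi_q$ vanishes near $\partial M_{q,\eta}$), and
\[
\Delta_{ij}(\chi_q\varphi_q) = \mu_q\,\chi_q\varphi_q + [\Delta_{ij},\chi_q]\varphi_q,
\]
where $\mu_q\in[0,c)$ is the eigenvalue of $\varphi_q$. The commutator $[\Delta_{ij},\chi_q]$ is a first-order operator whose coefficients are supported on the annuli $B(p,2\eta)\setminus\overline{B(p,\eta)}$, $p\in C^l_{ij}\setminus\{q\}$, on which $\dist_{ij}(q,\cdot)\ge\half\min_{p\ne q}\dist_{ij}(q,p) > 0$ once $\eta$ is small; hence applying \eqref{eigenestimate1} to $\varphi_q$, $d_{ij}\varphi_q$ and $d^*_{ij}\varphi_q$ yields $\|[\Delta_{ij},\chi_q]\varphi_q\|_{L^2} = \mathcal{O}_\epsilon(e^{-\lam\delta})$ for some $\delta > 0$. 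Because the spectral gap Lemma gives $\Spec(\Delta_{ij})\cap[c,C\lam) = \emptyset$, the component $(I-P_{ij})(\chi_q\varphi_q)$ lies in the span of the eigenspaces with eigenvalue $\ge C\lam$, so
\[
\|(I-P_{ij})(\chi_q\varphi_q)\| \le (C\lam)^{-1}\|\Delta_{ij}(\chi_q\varphi_q)\| \le (C\lam)^{-1}\big(c + \mathcal{O}_\epsilon(e^{-\lam\delta})\big) \longrightarrow 0
\]
as $\lam\to\infty$.

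The second step computes the Gram matrix of $\{\hat{\phi}_{ij}(e_q)\}_{q\in C^l_{ij}}$. On the diagonal, \eqref{eigenestimate1} gives $\|\chi_q\varphi_q\|^2 = \|\varphi_q\|^2 - \int_M(1-\chi_q^2)|\varphi_q|^2 = 1 - \mathcal{O}_\epsilon(e^{-\lam\delta})$, since $1-\chi_q^2$ is supported where $\varphi_q$ is exponentially small; off the diagonal, the pointwise bound in \eqref{eigenestimate1} together with the triangle inequality for $\dist_{ij}$ gives, for $q\ne q'$, $|\langle\chi_q\varphi_q,\chi_{q'}\varphi_{q'}\rangle| \le \int_M|\varphi_q|\,|\varphi_{q'}| = \mathcal{O}_\epsilon\big(e^{-\lam(\dist_{ij}(q,q')-2\epsilon)}\big)$, which is exponentially small after choosing $\epsilon$ small. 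Combining with Step 1,
\[
\langle\hat{\phi}_{ij}(e_q),\hat{\phi}_{ij}(e_{q'})\rangle = \langle\chi_q\varphi_q,\chi_{q'}\varphi_{q'}\rangle - \langle\chi_q\varphi_q,(I-P_{ij})(\chi_{q'}\varphi_{q'})\rangle = \delta_{qq'} + o(1)
\]
as $\lam\to\infty$, so this Gram matrix equals $\mathrm{Id}+o(1)$ and is invertible for $\lam > \lam_0(\eta)$. This gives the linear independence, and hence, with the dimension count, $\hat{\phi}_{ij}$ is a graded linear isomorphism $CM^*(f_{ij})\to\Omega^*_{ij}(M,\lam)_{sm}$.

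The step I expect to be the main obstacle is the dimension equality $\dim\Omega^l_{ij}(M,\lam)_{sm} = \#C^l_{ij}$ itself (if one wishes to prove it rather than cite it): Steps 1--2 already yield the lower bound $\ge$, but the matching upper bound $\le$ requires an IMS-type localization comparing $\Delta_{ij}$ on $l$-forms with the model harmonic oscillators at \emph{all} critical points of $f_{ij}$ — using in particular that near an index-$l'$ critical point with $l'\ne l$ the local Dirichlet Witten Laplacian on $l$-forms has spectrum bounded below by a fixed multiple of $\lam$. Granting this, everything else above is a routine application of \eqref{eigenestimate1} and the spectral gap Lemma.
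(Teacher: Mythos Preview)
Your argument is correct and is precisely the standard Helffer--Sj\"ostrand quasimode argument. Note, however, that the paper does not supply its own proof of this proposition: it is stated in a subsection explicitly labeled as recalling results from \cite{HelSj4}, and no proof environment follows the statement. Your Steps 1--2 (commutator estimate via \eqref{eigenestimate1}, spectral-gap bound on $(I-P_{ij})(\chi_q\varphi_q)$, Gram matrix $=\mathrm{Id}+o(1)$) are exactly the mechanism behind the cited result, and your closing remark about the dimension equality needing an IMS localization is the correct identification of the one nontrivial input you are quoting rather than proving. So there is nothing to compare: you have filled in the proof the paper chose to cite, along the same lines as the reference.
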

\begin{remark}
One may notice that $\varphi_q$ is defined only up to $\pm$ sign. Recall that in the definition of Morse category, we fix an orientation for unstable submanifold $V_q^-$ and stable submanifold $V_q^+$ at $q$. The sign of $\varphi_q$ is chosen such that it agrees with the orientation of $V_q^-$ at $q$.
\end{remark}

\begin{definition}
We renormalize $\hat{\phi}_{ij}(\eta,\lam)$ to give a map $\phi_{ij}(\eta,\lam)$ defined by
\begin{equation}
\phi_{ij}(\eta,\lam) (q) =\frac{|\alpha_-|^{\frac{1}{4}}}{|\alpha_+|^{\frac{1}{4}}}(\frac{\pi}{2\lam})^{\half ( \frac{n}{2} -\deg(q))} \hat{\phi}_{ij}(\eta,\lam)(q),
\end{equation}
where $\alpha_+$ and $\alpha_-$ are products of positive and negative eigenvalues of $\Hess f$ at $q$ respectively.
\end{definition}
\begin{remark}The meaning of the normalization is to get the following asymptotic expansion
\begin{equation}
\int_{V_q^-} e^{\lam f_{ij}} \phi_{ij}(\eta,\lam)(q) = 1 + \mathcal{O}(\lam^{-1}),
\end{equation}
which is the one appeared in \cite{zhang}.
\end{remark}

By the result of \cite{HelSj4}, we have a map\[
\phi = \phi_{ij}(\eta,\lam) : CM^*(f_{ij}) \rightarrow \Omega^*_{ij}(M,\lam)_{sm}\]
depending on $\eta$, $\lam \in \real_+$ such that it is an isomorphism when $\eta$, $\lam^{-1}$ are small enough. Furthermore, under the identification $\phi_{ij}(\eta,\lam)$, we have the identification of differential $d_{ij}$ and Morse differential $m_1$ from \cite{HelSj4} as
\begin{equation}
\langle d_{ij} \phi_{ij}(p),\phi_{ij}(q) \rangle = e^{-\lam(f_{ij}(q)-f_{ij}(p))}\langle m_1(p),q\rangle (1+ \mathcal{O}(\lam^{-1}))
\end{equation}
for $\lam^{-1}$ small enough, if $p,q$ are critical points of $f_{ij}$. This is originally proposed by Witten to understand Morse theory using twisted deRham complex.

% we will review its construction in appendix \ref{}.\\

It is natural to ask whether the product structures of two categories are related via this identification, and the answer is definite. The first observation is that the Witten's approach indeed produces an $A_{\infty}$ category, denoted by $\adr{M}$, with $A_{\infty}$ structure $\{\deprod{k}\}_{k\in \inte_+}$. It has the same class of objects as $\dr{M}$. However, the space of morphisms between two objects $f_i$, $f_j$ is taken to be $\Omega^*_{ij}(M,\lam)_{sm}$, with $m_1(\lam)$ being the restriction of $d_{ij}$ to the eigenspace $\Omega^*_{ij}(M,\lam)_{sm}$.

The natural way to define $\deprod{2}$ for any three objects $f_0$, $f_1$ and $f_2$ is the operation given by\[
\begin{CD} 
\Omega^*_{12}(M,\lam)_{sm} \otimes \Omega^*_{01}(M,\lam)_{sm}	@>(\iota_{12},\iota_{01})>> \Omega^*_{12}(M,\lam) \otimes \Omega^*_{01}(M,\lam)\\
@. @VV\wedge V\\ 
@. \Omega^*_{02}(M,\lam)\\ 
@. @VVP_{02}V\\ 
@. \Omega^*_{02}(M,\lam)_{sm}, \\
\end{CD} \]
where $\iota_{12}$ and $\iota_{01}$ are natural inclusion maps and $P_{ij} : \Omega^*_{ij}(M,\lam) \rightarrow \Omega^*_{ij}(M,\lam)_{sm}$ is the orthogonal projection.

Notice that $\deprod{2}$ is not associative, and we need a $\deprod{3}$ to record the non-associativity. To do this, let us consider the Green's operator $G_{ij}^0$ corresponding to Witten Laplacian $\Delta_{ij}$. We let
\begin{equation}\label{smallG}
G_{ij}=(I-P_{ij})G_{ij}^0
\end{equation}
and
\begin{equation}\label{H_ij}
H_{ij}=d_{ij}^*G_{ij}.
\end{equation}
Then $H_{ij}$ is a linear operator from $\Omega^*_{ij}(M,\lam)$ to $\Omega^{*-1}_{ij}(M,\lam)$ and we have\[
d_{ij}H_{ij}+H_{ij}d_{ij}=I-P_{ij}.\]
Namely $\Omega^*_{ij}(M,\lam)_{sm}$ is a homotopy retract of $\Omega^*_{ij}(M,\lam)$ with homotopy operator $H_{ij}$. Suppose $f_0$, $f_1$, $f_2$ and $f_3$ are smooth functions on $M$ and let $\varphi_{ij}\in\Omega^*_{ij}(M,\lam)_{sm}$, the higher product 
$$\deprod{3}:\Omega^*_{23}(M,\lam)_{sm}\otimes\Omega^*_{12}(M,\lam)_{sm}\otimes\Omega^*_{01}(M,\lam)_{sm}\rightarrow\Omega^*_{03}(M,\lam)_{sm}$$
 is defined by
\begin{equation}\label{m3}
\deprod{3}(\varphi_{23},\varphi_{12},\varphi_{01}) =
P_{03}(H_{13}(\varphi_{23}\wedge\varphi_{12})\wedge\varphi_{01}) + P_{03}( \varphi_{23}\wedge H_{02}(\varphi_{12}\wedge\varphi_{01})).
\end{equation}

In general, construction of $m_k(\lam)$ can be described using $k$-tree.

For $k\geq2$, we decompose $\deprod{k}:=\sum_T \deprodtree{k}{T}$, where $T$ runs over all topological types of $k$-trees. 
$$\deprodtree{k}{T} : \Omega_{(k-1)k}^*(M,\lam)_{sm}\otimes \cdots \otimes \Omega_{01}^*(M,\lam)_{sm} \rightarrow \Omega_{0k}^*(M,\lam)_{sm}$$ 
is an operation defined along the directed tree $T$ by 
\begin{itemize}
\item[(1)] applying inclusion map $\iota_{i(i+1)}:\Omega^*_{i(i+1)}(M,\lam)_{sm}\rightarrow\Omega^*_{i(i+1)}(M,\lam)$ at semi-infinite incoming edges; 
\item[(2)] applying wedge product $\wedge$ to each interior vertex;
\item[(3)] applying homotopy operator $H_{ij}$ to each internal edge labelled $ij$;
\item[(4)] applying projection $P_{0k}$ to the outgoing semi-infinite edge.
\end{itemize}

The higher products $\{m_k(\lam)\}_{k \in \inte_+}$ satisfies the generalized associativity relation which is the so called $A_\infty$ relation. One may treat the $A_\infty$ products as a pullback of the wedge product under the homotopy retract $P_{ij}: \Omega^*_{ij}(M,\lam) \rightarrow \Omega^*_{ij}(M,\lam)_{sm}$. This proceed is called the homological perturbation. For details about this construction, readers may see \cite{kontsevich00}.  As a result, we obtain an $A_{\infty}$ pre-category $\adr{M}$.

Finally, we state our main result relating $A_\infty$ operations on the twisted deRham category $\adr{M}$ and the Morse category $Morse(M)$.
\begin{theorem}\label{main_theorem}
Given $f_0,\ldots,f_k$ satisfying generic assumption as defined in \cite{Abouzaid06}, with $q_{ij} \in CM^*(f_{ij})$ be corresponding critical points, there exist $\eta_0$, $\lam_0>0$ and $C_0>0$, such that $\phi_{ij}(\eta,\lam^{-1}):CM^*(f_{ij}) \rightarrow \Omega^*_{ij}(M,\lam)_{sm}$ are isomorphism for all $i\neq j$ when $\eta<\eta_0$ and $\lam>\lam_0$. If we write $\phi(q_{ij}) = \phi_{ij}(\eta,\lam)(q_{ij})$, then we have 
\begin{eqnarray*}
&&\langle \deprod{k}(\phi(q_{(k-1)k}),\dots,\phi(q_{01})),  \frac{\phi(q_{0k})}{\| \phi(q_{0k})\|^2}\rangle\\
 &=& e^{-\lam A}(\langle \morprod{k}(q_{(k-1)k},\dots,q_{01}),q_{0k}\rangle+R(\lam)),\\
\end{eqnarray*}
with $$|R(\lam) | \leq C_0 \lam^{-1/2}$$ and $A=f_{0k}(q_{0k})- f_{01}(q_{01})-\dots- f_{(k-1)k}(q_{(k-1)k})$.

\end{theorem}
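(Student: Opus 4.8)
The plan is to prove Theorem~\ref{main_theorem} by induction on $k$, reducing each $A_\infty$ operation $\deprod{k}$ to a sum of localized integrals along the corresponding gradient flow trees, and then matching the leading asymptotics term by term with the Morse count $\morprod{k}$. The base case $k=1$ is precisely the statement of Helffer--Sj\"ostrand recalled above: $\langle d_{ij}\phi_{ij}(p),\phi_{ij}(q)\rangle = e^{-\lam(f_{ij}(q)-f_{ij}(p))}\langle m_1(p),q\rangle(1+\mathcal{O}(\lam^{-1}))$. For $k\geq 2$ I would expand $\deprod{k} = \sum_T \deprodtree{k}{T}$ over topological types of $k$-trees, and for each tree $T$ analyze the composite built out of inclusions $\iota_{i(i+1)}$, wedge products at the trivalent vertices, homotopy operators $H_{ij} = d_{ij}^* G_{ij}$ along internal edges, and the final projection $P_{0k}$. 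The key geometric input is that $\phi_{ij}(\eta,\lam)(q_{ij})$ is, by the decay estimate \eqref{eigenestimate1}, an eigenform exponentially concentrated (in the Agmon metric) near $q_{ij}$ and, more importantly, near the \emph{unstable submanifold} $V^-_{q_{ij}}$ of $f_{ij}$; so a wedge of two such forms is concentrated near the intersection of two unstable submanifolds, which is where gradient flow trees live.

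The heart of the argument is understanding the effect of the homotopy operator $H_{ij}$ applied to such a concentrated form. Writing $\nu = \varphi_{ik}\wedge\varphi_{jk}$ (schematically) for the wedge at an interior vertex, one must solve the inhomogeneous Witten Laplace equation $\Delta_{ij}\zeta = d_{ij}^*(I-P_{ij})\nu$ — equation \eqref{wittenequation} in the introduction — and show that $\zeta = H_{ij}\nu$ is again exponentially concentrated, now along a gradient \emph{flow line} of $f_{ij}$ emanating from the concentration locus of $\nu$, with the correct exponential weight dictated by the function values. The precise statement I would aim for is a propagation estimate: if $\nu$ concentrates near a point $x_0$ with weight $e^{-\lam f_{ij}(x_0)}$ (relative to the natural reference), then $H_{ij}\nu$ concentrates along the forward gradient trajectory $\gamma$ from $x_0$, with $H_{ij}\nu(x) \sim e^{-\lam(f_{ij}(x) - f_{ij}(x_0))}$ times something bounded, plus an error one power of $\lam^{-1/2}$ smaller (the $\lam^{-1/2}$ coming from Gaussian concentration transverse to $\gamma$, consistent with the stated error $R(\lam)$). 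Iterating this along the tree $T$, each internal edge contributes a gradient trajectory and the accumulated exponential weight telescopes to exactly $e^{-\lam A}$ with $A = f_{0k}(q_{0k}) - \sum f_{i(i+1)}(q_{i(i+1)})$; meanwhile the remaining finite-dimensional integral (after the final $P_{0k}$, i.e. pairing against $\phi(q_{0k})/\|\phi(q_{0k})\|^2$) localizes by stationary phase / Laplace asymptotics to a signed count of rigid gradient trees of type $T$, which by definition is $\langle \morprod{k}^T(\vec q),q_{0k}\rangle$. Summing over $T$ gives $\langle \morprod{k}(\vec q),q_{0k}\rangle$, and the genericity hypothesis from \cite{Abouzaid06} guarantees the moduli spaces $\mathcal{M}(\vec f,\vec q)$ are cut out transversally so the Laplace-method leading coefficients are $\pm 1$ with the right signs.

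The main obstacle is exactly the propagation estimate for $H_{ij} = d_{ij}^*G_{ij}$, i.e. controlling the Green's operator of the Witten Laplacian applied to a source term that is itself exponentially small and concentrated. One cannot simply use the spectral gap and $\|G_{ij}\| \leq c^{-1}$, because that only gives an $L^2$ bound and loses the crucial information about \emph{where} $\zeta_E$ is supported and with what exponential rate — and a naive bound would not even see the factor $e^{-\lam A}$. The technically delicate point is guessing the correct Agmon-type exponential decay profile for $\zeta_E$ along the flow line $\gamma$ and proving it, which requires constructing a good approximate solution (a WKB/BKW ansatz adapted to the flow of $f_{ij}$), estimating the residual, and then using the spectral gap together with a weighted (Agmon-metric-twisted) energy estimate to control $\zeta_E$ minus the ansatz. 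This is the step flagged in the introduction as ``the difficulties come from guessing the precise exponential decay for the solution $\zeta_E$ along $\gamma$,'' and it is where the bulk of the analysis in \cite{klchan-leung-ma} is spent; once it is in place, the tree induction and the final Laplace-asymptotics computation are essentially bookkeeping, with the $A_\infty$ relations on both sides ensuring consistency.
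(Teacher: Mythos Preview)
Your proposal is correct and matches the approach outlined in the paper. Note that this survey does not itself contain a proof of the theorem --- it is stated here and referred to \cite{klchan-leung-ma} for the full argument --- but the sketch given in the introduction (the $k=1$ case via Helffer--Sj\"ostrand, the tree decomposition of $\deprod{k}$, and the analysis of the inhomogeneous Witten Laplacian equation \eqref{wittenequation} governing $H_{ij}$ along flow lines, with the explicit comment that ``the difficulties come from guessing the precise exponential decay for the solution $\zeta_E$ along $\gamma$'') is exactly the strategy you describe, including your identification of the propagation/WKB estimate for $H_{ij}$ as the core analytical step and the subsequent tree induction and Laplace asymptotics as bookkeeping.
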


\begin{remark}
The constants $\eta_0$, $C_0$ and $\lam_0$ depend on the functions $f_0,\dots,f_k$. In general, we cannot choose fixed constants that the above statement holds true for all $\deprod{k}$ and all sequences of functions.
\end{remark}

\begin{remark}
The constant $A$ has a geometric meaning. If we consider the cotangent bundle $T^*M$ of a manifold $M$ which equips the canonical symplectic form $\omega_{can}$, and take $L_i = \Gamma_{df_i}$ to be the Lagrangian sections. Then $q_{ij} \in L_i \pitchfork L_j$ and $A$ would be the symplectic area of a degenerated holomorphic disk passing through the intersection points $q_{ij}$ and having boundary lying on $L_i$. For details, one may consult \cite{kontsevich00}
\end{remark}

%------------------------------scattering diagram-----------------------------------------------

\section{Scattering diagram and Maurer-Cartan equation}\label{scattering_MCequation}
We will review the work in \cite{clm0} investigating the asymptotic behaviour as $\lam \rightarrow \infty$ of solution $\check{\varphi}$ on $\check{X}_0$, or equivalently its mirror $\varphi$ on $\mathcal{L}X_0$, to the Maurer-Cartan equation 
\begin{equation}\label{MCE}
D\check{\varphi}+\half[\check{\varphi},\check{\varphi}] = 0.
\end{equation}
Restriciting ourself to the classical deformation of holomorphic volume form given by $\check{\varphi}= \check{\varphi}^{(0,0)} + \check{\varphi}^{1,1} \in PV^{0,0}(\check{X}_0) \oplus PV^{1,1}(\check{X}_0)$, it is well known that $\check{\varphi}^{1,1}$ satisfies the Maurer-Cartan equation
\begin{equation}\label{cpxstructure_MCequation}
\bar{\partial} \check{\varphi}^{1,1} + \half [\check{\varphi}^{1,1},\check{\varphi}^{1,1}] = 0
\end{equation}
governing the deformation of complex structures on $\check{X}_0$. We found that when the $\real$-parameter $\lambda \rightarrow +\infty$ (which refers to large structure limits on both A-/B-sides), solution $\check{\varphi}^{1,1}$ to the Maurer-Cartan equation \ref{cpxstructure_MCequation} will limit to delta functions supported on codimension $1$ walls, which will be a tropical data known as a scattering diagram as in \cite{gross2010tropical, kontsevich-soibelman04}. 

\subsection{Scattering diagrams}\label{recallscattering}
In the section, we recall the combinatorial scattering process described in \cite{gross2010tropical,kontsevich-soibelman04}. We will adopt the setting and notations from \cite{gross2010tropical} with slight modifications to fit into our context.

\subsubsection{Sheaf of tropical vertex group}
We first give the definition of a tropical vertex group, which is a slight modification of that from \cite{gross2010tropical}. As before, let $B_0$ be a tropical affine manifold, equipped with a Hessian type metric $g$ and a $B$-field $\beta$.

We first embed the lattice bundle $\Lambda \hookrightarrow \check{p}_* T^{1,0}_{\check{X}_0}$ into the sheaf of holomorphic vector fields. In local coordinates, it is given by
$$
n = (n_j) \mapsto \check{\partial}_n = \sum_j n_j \check{\partial}_j = \frac{i}{4 \pi}\sum_j n_j \left( \dd{y^j}- i\lam^{-1} \left( \dd{x^j}-\sum_k\beta_j^k \dd{y^k} \right) \right).
$$
The embedding is globally defined, and we write $ T^{1,0}_{B_0,\inte}$ to stand for its image.

Given a tropical affine manifold $B_0$, we can talk about the sheaf of integral affine functions on $B_0$.
\begin{definition}
The {\em sheaf of integral affine functions} $Aff_{B_0}^\inte$ is a subsheaf of continuous functions on $B_0$ such that $m \in Aff_{B_0}^\inte(U)$ if and only if $m$ can be expressed as
$$
m(x) = a_1x^1 + \dots + a_n x^n + b,
$$
in small enough local affine coordinates of $B_0$, with $a_i \in \inte$ and $b \in \real$.
\end{definition}

On the other hand, we consider the subsheaf of affine holomorphic functions $\mathcal{O}^{aff} \hookrightarrow \check{p}_*\mathcal{O}_{\check{X}_0}$ defined by an embedding $ Aff_{B_0}^\inte\hookrightarrow \check{p}_*\mathcal{O}_{\check{X}_0}$:
\begin{definition}
Given $m \in Aff_{B_0}^\inte(U)$, expressed locally as $m(x) = \sum_j a_k x^j + b$, we let
$$
\bmc^m =e^{-2\pi \lam b} (\bmc^1)^{a_1} \dots (\bmc^n)^{a_n} \in \mathcal{O}_{\check{X}_0}(\check{p}^{-1}(U)),
$$
where $\bmc^j = e^{-2\pi i [ (y^j +\sum_k \beta^j_k x^k) + i \lam x^j ]}$. This gives an embedding
$$
Aff_{B_0}^\inte(U)\hookrightarrow \mathcal{O}_{\check{X}_0}(\check{p}^{-1}(U)),
$$
and we denote the image subsheaf by $\mathcal{O}^{aff}$.
\end{definition}

\begin{definition}
We let $\mathfrak{g} = \mathcal{O}^{aff} \otimes_\inte T^{1,0}_{B_0,\inte}$ and define a Lie bracket structure $[\cdot,\cdot]$ on $\mathfrak{g}$ by the restricting the usual Lie bracket on $\check{p}_*\mathcal{O}(T^{1,0}_{\check{X}_0})$ to $\mathfrak{g}$.
\end{definition}
This is well defined because we can verify $\mathfrak{g}$ closed under the Lie bracket structure of $\check{p}_*\mathcal{O}(T^{1,0}_{\check{X}_0})$ by direct computation.

\begin{remark}
There is an exact sequence of sheaves
$$
0 \rightarrow \underline{\real} \rightarrow Aff^\inte_{B_0} \rightarrow \Lambda^*\rightarrow 0 ,
$$
where $\underline{\real}$ is the local constant sheaf of real numbers. The pairing $\langle m, n \rangle$ is the natural pairing for $m \in \Lambda_x^*$ and $n \in \Lambda_x$. Given a local section $m \in \Lambda^*(U)$, we let $m^\perp \subset Aff^\inte_{B_0}(U)$ be the subset which is perpendicular to $m$ upon descending to $\Lambda(U)$.
\end{remark}

\begin{definition}
The subsheaf $\mathfrak{h} \hookrightarrow \mathfrak{g}$ consists of sections which lie in the image of the composition of maps
$$
\bigoplus_{m \in \Lambda^*(U)} \comp \cdot \bmc^{m} \otimes_\inte (\inte m^\perp) \rightarrow \mathcal{O}^{aff}(U) \otimes_\inte T^{1,0}_{B_0,\inte}(U)\cong \mathfrak{g}(U),
$$
locally in an affine coordinate chart $U$.
\end{definition}

Note that $\mathfrak{h}$ is a sheaf of Lie subalgebras of $\mathfrak{g}$. Given a formal power series ring $R = \comp[[ t_1,\dots,t_l]]$, with maximal ideal $\mathbf{m} = (t_1,\dots, t_l)$, we write $\mathfrak{g}_R = \mathfrak{g}\otimes_\comp R$ and $\mathfrak{h}_R = \mathfrak{h} \otimes_\comp R$.

\begin{definition}
The {\em sheaf of tropical vertex group} over $R$ on $B_0$ is defined as
the sheaf of exponential groups $\exp(\mathfrak{h}\otimes \mathbf{m})$ which acts as automorphisms on $\mathfrak{h}_R$ and $\mathfrak{g}_R$.
\end{definition}

\subsubsection{Kontsevich-Soibelman's wall crossing formula}\label{2dscattering}
Starting from this subsection, we fix once and for all a rank $n$ lattice $M \cong \bigoplus_{i=1}^n \inte \cdot\mathtt{e}_i $ parametrizing Fourier modes, and its dual $N \cong \bigoplus_{i=1}^n \inte \cdot \check{\mathtt{e}}_i$. We take the integral affine manifold $B_0$ to be $N_\real = N \otimes_\inte \real $, equipped $B_0$ with coordinates $\sum_{i=1}^n x^i\check{\mathtt{e}}_i$, the standard metric\footnote{Since we are taking the standard metric, we can restrict ourself to consider walls supported on tropical codimension $1$ polyhedral subset.} $g = \sum_{i=1}^n (dx^i)^2$ and a $B$-field $\beta$. We also write $M_\real = M \otimes_\inte \real$. Then we have the identifications
$$
X_0 \cong B_0 \times (M_\real/M),\quad \check{X}_0 \cong B_0 \times(N_\real/N).
$$
There is also a natural identification $\mathcal{L}X_0 \cong X_0 \times M $. We will denote the connected component $X_0 \times \{m\}$ by $X_{0,m}$ for each Fourier mode $m$. We equip $\mathcal{L}X_0$ with the natural metric $\half g_{X_0}$ from $X_0$.

\begin{definition}\label{wall}
A {\em wall} $\mathbf{w}$ is a triple $(m, P, \Theta)$ where $m$ lies in $M\setminus \{0\}$ and $P$ is an oriented codimension one polyhedral subset of $B_0$ of the form
$$
P = Q - \real_{\geq 0 } (m\lrcorner g),
$$
or
$$
P = Q - \real (m\lrcorner g)
$$
for some codimension two polyhedral subset $Q \subset B_0$. If we are in the first case, we denote by $Init(\mathbf{w}) = Q$ the initial subset of $\mathbf{w}$.

Here $\Theta$ is a section $\Gamma(P,\exp(\mathfrak{h}\otimes \mathbf{m})|_{P})$ of the form

$$Log(\Theta) = \sum_{k> 0}\sum_{\mathbf{j}\neq 0}\sum_{n:n\perp m} a_{\mathbf{j}k}^n \bmc^{-km}\check{\partial}_{n} t^{\mathbf{j}},$$
where $\mathbf{j} = (j_1,\dots,j_l)$ and $a_{\mathbf{j}k}^l \neq 0$ only for finitely many $k$'s and $n$'s for each fixed $\mathbf{j}$.
\end{definition}

\begin{definition}
A {\em scattering diagram} $\mathscr{D}$ is a set of walls $\left\{ ( m_\alpha, P_\alpha, \Theta_\alpha) \right\}_{\alpha }$ such that there are only finitely many $\alpha$'s with $\Theta_\alpha \neq id $ $(\text{mod $\mathbf{m}^N$})$ for every $N \in \inte_+$.

Given a scattering diagram $\mathscr{D}$, we will define the {\em support} of $\mathscr{D}$ to be
$$
supp(\mathscr{D}) = \bigcup_{\mathbf{w} \in \mathscr{D}} P_{\mathbf{w}},
$$
and the {\em singular set} of $\mathscr{D}$ to be
$$
Sing(\mathscr{D}) = \bigcup_{\mathbf{w} \in \mathscr{D}} \partial P_{\mathbf{w}} \cup \bigcup_{\mathbf{w}_1\pitchfork \mathbf{w}_2} P_{\mathbf{w}_1} \cap P_{\mathbf{w}_2},
$$
where $\mathbf{w}_1 \pitchfork \mathbf{w}_2$ means they intersect transverally.
\end{definition}

Given an embedded path
$$
\gamma : [0,1] \rightarrow B_0 \setminus Sing(\mathscr{D}),
$$
with $\gamma(0) ,\gamma(1) \notin supp(\mathscr{D})$ and which intersects all the walls in $\mathscr{D}$ transversally, we can define the {\em analytic continuation along $\gamma$} as in \cite{gross2010tropical} (which was called the {\em path ordered product} there). Roughly speaking, there will be only finitely many walls modulo $\mathbf{m}^{N}$ and let us enumerate them by $\mathbf{w}_1,\dots,\mathbf{w}_{r}$ according to their order of intersection along the path $\gamma$. We simply take 
$$
\prod^{\rightarrow}_{\gamma} \Theta_a := \Theta_r \cdot \Theta_{r-1}\cdots \Theta_1 \;\;(mod\;\mathbf{m}^{N}).
$$
We refer readers to \cite{gross2010tropical, kwchan-leung-ma} for a precise definition of it. 

\begin{definition}
Two scattering diagrams $\mathscr{D}$ and $\tilde{\mathscr{D}}$ are said to be {\em equivalent} if
$$
\Theta_{\gamma(1),\mathscr{D}} = \Theta_{\gamma(1),\tilde{\mathscr{D}}}
$$
for any embedded curve $\gamma$ such that analytic continuation is well defined for both $\mathscr{D}$ and $\tilde{\mathscr{D}}$.
\end{definition}

Given a scattering diagram $\mathscr{D}$, there is a unique representative $\mathscr{D}_{min}$ from its equivalent class which is minimal by removing trivial walls and combining overlapping walls. The key combinatorial result concerning scattering diagrams is the following theorem from \cite{kontsevich-soibelman04}; we state it as in \cite{gross2010tropical}.
\begin{theorem}[Kontsevich and Soibelman \cite{kontsevich-soibelman04}]\label{KSscatteringtheorem}
Given a scattering diagram $\mathscr{D}$, there exists a unique minimal scattering diagram $\mathcal{S}(\mathscr{D}) \supset \mathscr{D}_{min}$ given by adding walls whose initial subset are nonemtpy so that
$$\Theta_{\gamma(1)} = I$$
for any closed loop $\gamma$ such that analytic continuation along $\gamma$ is well defined.

A scattering diagram having this property is said to be {\em monodromy free}.
\end{theorem}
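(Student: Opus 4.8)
The plan is to prove the theorem by an order-by-order induction on powers of the maximal ideal $\mathbf{m} \subset R$, building $\mathcal{S}(\mathscr{D})$ as an increasing union of finite scattering diagrams $\mathscr{D}_{min} \subset \mathscr{D}^{(1)} \subset \mathscr{D}^{(2)} \subset \cdots$, where $\mathscr{D}^{(N)}$ is monodromy free modulo $\mathbf{m}^{N+1}$, i.e. $\prod^{\rightarrow}_{\gamma}\Theta \equiv I \ (\mathrm{mod}\ \mathbf{m}^{N+1})$ for every admissible closed loop $\gamma$. Since $B_0 = N_\real$ is contractible, it suffices at each stage to kill the monodromy around small loops $\gamma_{\mathfrak{j}}$ enclosing a single joint $\mathfrak{j} \subset Sing(\mathscr{D}^{(N-1)})$ (a codimension-two polyhedral stratum). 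Every wall-crossing factor is $\equiv id \ (\mathrm{mod}\ \mathbf{m})$, so the base case $N=0$ is vacuous, and because only finitely many walls are added modulo any fixed power of $\mathbf{m}$, the construction converges $\mathbf{m}$-adically to a genuine scattering diagram $\mathcal{S}(\mathscr{D})$; one then passes to its minimal representative.

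For the inductive step assume $\mathscr{D}^{(N-1)}$ is monodromy free modulo $\mathbf{m}^N$. For each joint $\mathfrak{j}$ write $\prod^{\rightarrow}_{\gamma_{\mathfrak{j}}}\Theta \equiv I + \epsilon_{\mathfrak{j}} \ (\mathrm{mod}\ \mathbf{m}^{N+1})$, where $\epsilon_{\mathfrak{j}}$ lies in the piece of $\mathfrak{h}\otimes(\mathbf{m}^N/\mathbf{m}^{N+1})$, so it is a finite sum of monomials $a\,\bmc^{-km'}\check{\partial}_{n'}t^{\mathbf{j}'}$ with $|\mathbf{j}'| = N$. The key structural fact, which is really a computation inside the graded Lie algebra $\mathfrak{h}_R$, is that iterated brackets of the generating elements $\bmc^{-k_i m_i}\check{\partial}_{n_i}$ (with $n_i \perp m_i$) of the walls through $\mathfrak{j}$ produce only terms $\bmc^{-m'}\check{\partial}_{n'}$ with $m'$ a positive integral combination of the $m_i$ and $n' \perp m'$; hence $\epsilon_{\mathfrak{j}}$ decomposes as a finite sum $\sum_\alpha \epsilon_{\mathfrak{j},\alpha}$ indexed by finitely many rays $P_\alpha = \mathfrak{j} - \real_{\geq 0}(m_\alpha \lrcorner g)$, each $\epsilon_{\mathfrak{j},\alpha}$ having exactly the monomial shape required of $Log(\Theta)$ in Definition \ref{wall}. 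We then adjoin to $\mathscr{D}^{(N-1)}$, for each $\mathfrak{j}$ and each such ray, a new wall $\mathbf{w}_{\mathfrak{j},\alpha} = (m_\alpha, P_\alpha, \Theta_{\mathfrak{j},\alpha})$ with nonempty initial subset $Init(\mathbf{w}_{\mathfrak{j},\alpha}) = \mathfrak{j}$ and $Log(\Theta_{\mathfrak{j},\alpha})$ chosen so that its contribution cancels $\epsilon_{\mathfrak{j},\alpha}$ at order $N$. Because these new walls are $\equiv id \ (\mathrm{mod}\ \mathbf{m}^N)$, all Baker--Campbell--Hausdorff corrections from inserting them land in $\mathbf{m}^{2N} \subset \mathbf{m}^{N+1}$; consequently they do not perturb lower-order consistency, do not change the path-ordered product around any other joint at order $\leq N$, and render $\gamma_{\mathfrak{j}}$ trivial modulo $\mathbf{m}^{N+1}$. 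This produces $\mathscr{D}^{(N)}$.

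For uniqueness, suppose $\mathscr{D}$ and $\tilde{\mathscr{D}}$ are two monodromy-free extensions of $\mathscr{D}_{min}$ obtained by adjoining rays with nonempty initial subsets, and let $N$ be the lowest order at which their order-$N$ pieces differ. The difference of these pieces is a collection of ray-supported elements of $\mathfrak{h}$ whose combined monodromy around every joint vanishes modulo $\mathbf{m}^{N+1}$; since at this order the monodromy contribution is linear and the map sending a ray-supported element of $\mathfrak{h}$ to its monodromy around the unique joint from which it emanates is injective, these pieces must agree. Minimality then excludes any remaining trivial walls, so $\mathscr{D}_{min} = \tilde{\mathscr{D}}_{min}$ and hence $\mathcal{S}(\mathscr{D})$ is unique.

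The main obstacle I expect is precisely the structural claim of the second paragraph: that the order-$N$ obstruction $\epsilon_{\mathfrak{j}}$ actually lies in $\bigoplus_\alpha \mathfrak{h}|_{P_\alpha}$, i.e. that it is a finite sum of terms each attachable to a legitimate outgoing ray of rational slope and each of the exact form in Definition \ref{wall} (in particular with $n' \perp m'$). Establishing this requires a careful bracket computation in $\mathfrak{h}_R$ showing $[\bmc^{-k_1m_1}\check{\partial}_{n_1}, \bmc^{-k_2m_2}\check{\partial}_{n_2}]$ is a scalar multiple of $\bmc^{-(k_1m_1+k_2m_2)}\check{\partial}_{n}$ with $n \perp (k_1m_1+k_2m_2)$, together with the bookkeeping that organizes the closed-up path-ordered product around $\mathfrak{j}$ into ray contributions and guarantees that only finitely many new rays are created modulo each $\mathbf{m}^N$.
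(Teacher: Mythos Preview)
The paper does not give its own proof of this theorem. It is explicitly quoted as a result of Kontsevich--Soibelman \cite{kontsevich-soibelman04}, stated in the form of \cite{gross2010tropical}, and is used as a black box in the rest of Section~\ref{scattering_MCequation}. So there is no in-paper argument to compare your proposal against.

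That said, what you have written is essentially the standard order-by-order argument found in \cite{kontsevich-soibelman04} and \cite{gross2010tropical}: induct on powers of $\mathbf{m}$, compute the order-$N$ monodromy defect around each joint, decompose it into ray contributions inside $\mathfrak{h}$, and cancel each by adjoining a new wall with nonempty initial set; uniqueness follows from linearity of the order-$N$ contribution. Your identification of the ``main obstacle'' is the right one and is exactly the reason $\mathfrak{h}$ is defined as it is: the bracket $[\bmc^{-k_1m_1}\check{\partial}_{n_1},\bmc^{-k_2m_2}\check{\partial}_{n_2}]$ lands in $\comp\cdot\bmc^{-(k_1m_1+k_2m_2)}\check{\partial}_{n}$ with $n\perp(k_1m_1+k_2m_2)$, so the obstruction automatically has the shape of a legitimate $Log(\Theta)$ supported on the half-plane $\mathfrak{j}-\real_{\geq 0}((k_1m_1+k_2m_2)\lrcorner g)$. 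One small correction: in general a joint in a non-standard diagram can also produce rays in the ``backward'' half-plane directions of the existing walls, not only in the open cone between them, so your phrase ``$m'$ a positive integral combination of the $m_i$'' should be read as a nonnegative (and nonzero) integral combination over all walls through $\mathfrak{j}$; this does not affect the argument.
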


In a generic scattering diagram as in \cite{kontsevich-soibelman04}, we only have to consider the scattering process (i.e. the process of adding new walls to obtain a monodromy free scattering diagram) involving two walls intersect transverally once at a time. Therefore we have the following definition of a standard scattering diagram for the study of scattering phenomenon.
\begin{definition}
A scattering diagram $\mathscr{D}$ is called {\em standard} if
\begin{itemize}
\item
$\mathscr{D}$ consists of two walls $\left\{ \mathbf{w}_i = (m_i , P_i,\Theta_i) \right\}_{i=1,2}$ whose supports $P_i$ are lines passing through the origin,

\item
the Fourier modes $m_1$ and $m_2$ are primitive, and

\item
for $i = 1, 2$,
$$
Log(\Theta_i) \in (\comp[\bmc^{m_i}]\cdot \bmc^{m_i})\otimes_\inte (\bigoplus_{n\perp m_i} \inte \check{\partial}_n) \otimes (\comp[[t_i]]\cdot t_i),
$$
i.e. $t_i$ is the only formal variable in the series expansion of $Log(\Theta_i)$.
\end{itemize}
\end{definition}

When considering a standard scattering diagram, we can always restrict ourselves to the power series ring $R = \comp[[t_1,t_2]]$. $\mathcal{S}(\mathscr{D})$ is obtained from $\mathscr{D}$ by adding walls supported on half planes through the origin. Furthermore, each of the wall added will have its Fourier mode $m$ laying in the integral cone $\inte_{> 0} m_1 + \inte_{> 0 } m_2$. We close this section by giving an example from \cite{gross2010tropical}.

\begin{example}
In this example, we consider the diagram $\mathscr{D}$ with two walls $\mathbf{w}_i$ with the same support as above, but different wall crossing factors $\check{\phi}_i \otimes \check{\partial}_{n_i} = \log(1+t_i(\bmc^i)^{-1})^2\otimes\check{\partial}_{n_i}$ (see Figure \ref{scatteringdiagram2}).
\begin{figure}[h]
\begin{center}
\includegraphics[scale=0.8]{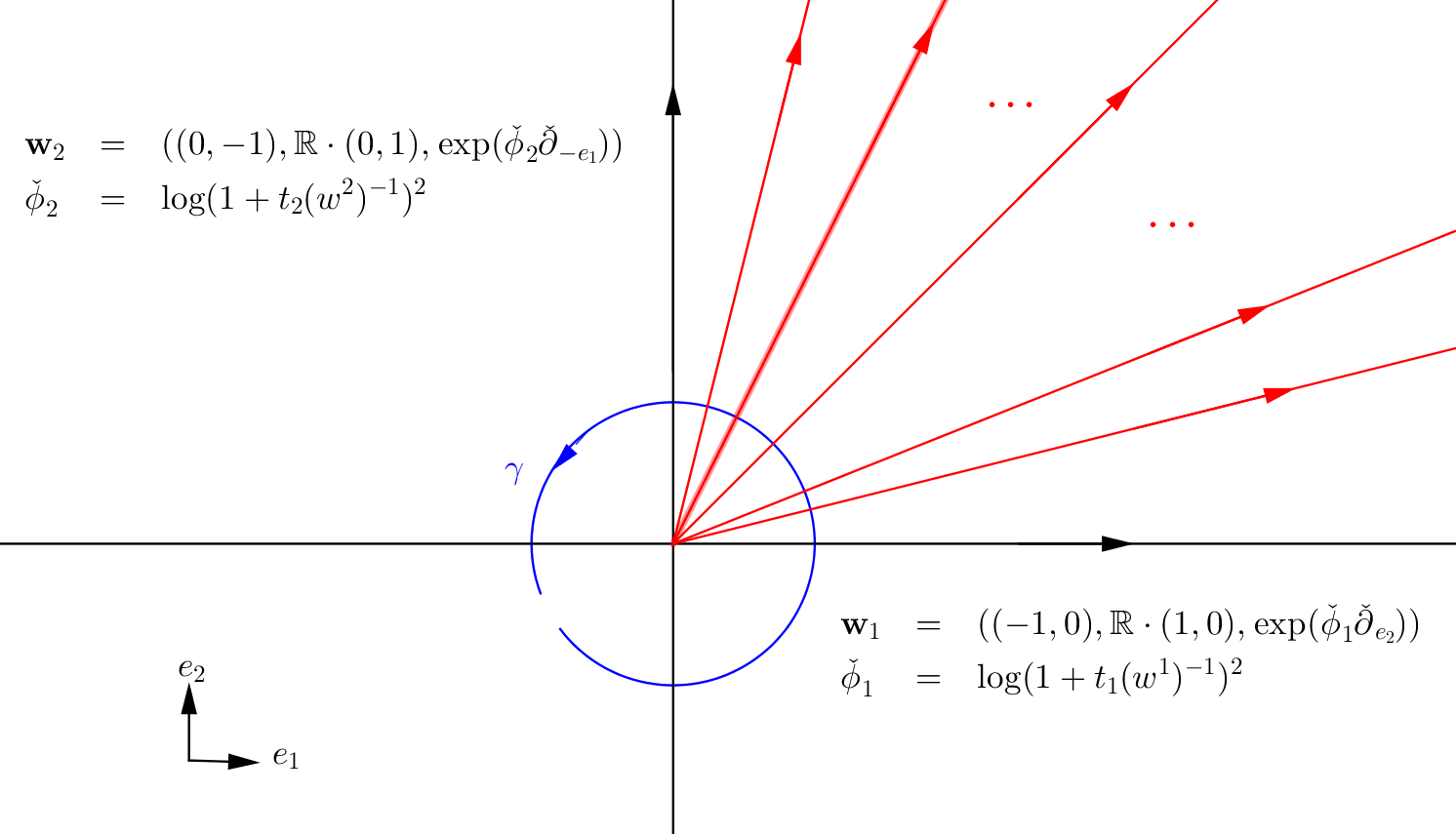}
\end{center}
\caption{}\label{scatteringdiagram2}
\end{figure}
The diagram $\mathcal{S}(\mathscr{D})$ then has infinitely many walls. We have
$$
\mathcal{S}(\mathscr{D}) \setminus \mathscr{D} = \bigcup_{k\in \inte_{>0}} \mathbf{w}_{k,k+1} \bigcup_{k\in \inte_{>0}} \mathbf{w}_{k+1,k} \cup \{\mathbf{w}_{1,1}\},
$$
where the wall $\mathbf{w}_{k,k+1}$ has dual lattice vector $(k,k+1) \in M$ supported on a ray of slope $\frac{k+1}{k}$. The wall crossing factor $\Theta_{k,k+1} = \exp(\check{\phi}_{k,k+1}\otimes \partial_{(-(k+1),k)})$ is given by
$$
\check{\phi}_{k,k+1} = 2\log(1+t_1^kt_2^{k+1}(\bmc^1)^{-k}(\bmc^2)^{-(k+1)}),
$$
and similarly for $\check{\phi}_{k+1,k}$. The wall crossing factor $\Theta_{1,1} = \exp(\check{\phi}_{1,1}\otimes \partial_{(-1,1)})$ associated to $\mathbf{w}_{1,1}$ is given by
$$
\check{\phi}_{1,1} = -4\log(1-t_1t_2(\bmc^1\bmc^2)^{-1}).
$$
\end{example}

Interesting relations between these wall crossing factors and relative Gromov-Witten invariants of certain weighted projective planes were established in \cite{gross2010tropical}. Indeed it is expected that these automorphisms come from counting holomorphic disks on the mirror A-side, which was conjectured by Fukaya \cite{fukaya05} to be closely related to Witten-Morse theory.

%-------------------------------------Ansatz for one wall-------------------------------------------

\subsection{Single wall diagrams as limit of deformations}\label{onewall}
In this section, we simply take the ring $R = \comp[[t]]$ and consider a scattering diagram with only one wall $\mathbf{w} = ( -m , P, \Theta)$ where $P$ is a plane passing through the origin. Writing 
\begin{equation}\label{wallcrossingfactor}
Log(\Theta) = \sum_{k> 0}\sum_{j}\sum_{n:n\perp m} a_{jk}^n \bmc^{-km}\check{\partial}_{n} t^{j} ,
\end{equation}
where $a_{jk}^n \neq 0$ only for finitely many $k$'s and $n$'s for each fixed $j$. The plane $P$ divides the base $B_0$ into two half planes $H_+$ and $H_-$ according to the orientation of $P$. We are going to interpret $Log(\Theta)$ as a step function like section $\check{\facs}_0 \in \Omega^{0,0}(\check{X}_0\setminus \check{p}^{-1}P, T^{1,0}_{\check{X}_0})[[t]]$ of the form
\begin{equation*}
\check{\facs}_0
= \left\{
\begin{array}{ll}
Log(\Theta) & \text{on $H_+$},\\
0& \text{on $H_-$},
\end{array}\right.
\end{equation*}
and write down an ansatz $e^{\check{\facs}_\lam} * 0 = \check{\Pi} = \check{\Pi}_\lam \in \Omega^{0,1}(\check{X}_0, T^{1,0})$ (we will often drop the $\lam$ dependence in our notations) which represents a smoothing of $e^{\check{\facs}_0}  *0$ (which is not well defined itself), and show that the leading order expansion of $\check{\facs}$ is precisely $\check{\facs}_0$ as $\lam \rightarrow \infty$.

Since the idea of relating our ansatz $\check{\Pi}$ to a delta function supported on the wall goes back to Fukaya's purpose in \cite{fukaya05} using Multivalued Morse theory on $\mathcal{L}X_0$. It will be more intuitive to define our ansatz $\check{\Pi}$ using the Fourier transform \ref{Extendedtransform}. In section \ref{onewall}, we will restrict the Fourier transform \ref{Extendedtransform} to the Kodaira-Spencer complex $KS^*(\check{X}_0)$, and obtain
\begin{equation}\label{restrict_FT}
\mathcal{F} : \Omega^{1,*}_{cf}(X_0) \rightarrow PV^{1,*}(\check{X}_0) = KS^*(\check{X}_0),
\end{equation}
which identifies the differential $\bar{\partial}$ to the {\em Witten differential}
$$
d_W = d + 2 \pi i \dot{\gamma} \lrcorner (\beta-i\omega) \wedge.
$$

\begin{remark}\label{localdgLa}
The Witten differential in $\Omega^{1,*}_{cf}(X_0)$ can be written in a more explicit form in local coordinates. Let $U \subset B_0$ be a contractible open set with local coordinates $u^1,\dots,u^n,y_1,\dots, y_n$ for $p^{-1}(U)$. Then we can parametrize $\mathcal{L}X_0|_{U} \cong p^{-1}(U) \times \inte^n$ by $(u,y,m)$ where $(u,y)\in p^{-1}(U)$ and $m = (m_1,\ldots,m_n) \in \inte^n$ representing an affine loop in the fiber $p^{-1}(u)$ with tangent vector $\sum_j m_j \dd{y_j}$. We denote the component $p^{-1}(U) \times \{m\} \subset \mathcal{L}X_0|_{U}$ by $X_0(U)_m$ and the vector field $\dot{\gamma}$ on $X_0(U)_m$ by $\dot{\gamma}_m$. Fixing a point $u_0 \in U$, we define a function $f_m = 2 \pi i\int_{u_0}^u \dot{\gamma}_m \lrcorner (\beta-i\omega)$ satisfying $df_m = 2\pi i\dot{\gamma}_m \lrcorner(\beta-i\omega)$ on $X_0(U)_m$. We have the relation
$$
d_W = e^{-f_m} d e^{f_m},
$$
on $\Omega^{1,*}_{cf}(X_0(U)_m)$ via the identification. Notice that $f_m$ are constant along the torus fiber in $X_0(U)_m$ and hence can be treated as a function on $B_0$. The collection $\{f_{m}\}_{m \in M}$ are called Multivalued Morse functions in \cite{fukaya05}.
\end{remark}

\subsubsection{Ansatz corresponding to a wall}\label{sec:ansatz_one_wall}
Since our base $B_0$ is simply $\real^n$, we can writing $\mathcal{L}X_0  = \coprod_{m\in M } X_{0,m}$ where we can further identify $X_{0,m} = B_{0,m} \times (M_{\real} / M)$. We start with defining coordinates $u^i_{m}$'s, or simply $u^i$'s if there is no confusion, for each component $B_{0,m}$. 

\begin{definition}\label{orthonormalcoordinates}
For each $m\in M$, we use orthonormal coordinates $u^1\tilde{e}_1+ u^2 \tilde{e}_2+\dots+u^n \tilde{e}_n$ for $B_{0,km}$ ($k \in \inte_{>0}$) with the properties that $\tilde{e}_1$ is parallel to $-m\lrcorner g$. We will denote the remaining coordinates by $u^{\perp} = (u^2,\dots,u^n)$ for convenience.
\end{definition}

Given a wall $\mathbf{w} = (-m,P,\Theta)$, we can choose $\tilde{e}_2\in H_+$ to be the unit vector normal to $P$ for convenience. In that case $|u^2|$ will simply be the distance function to the plane $P$. We consider a $1$-form on $B_{0,m}$
\begin{equation}\label{deltadefinition}
\delta_{-m} = \delta_{-m,\lam} =\left( \frac{\lambda}{\pi} \right)^{\half} e^{-\frac{\lambda (u^2)^2}{2}} du^2,
\end{equation}
for some $\lambda \in \real_+$, having the property that $\int_{L} \delta_{-m} \equiv 1$ for any line $L\cong \real$ perpendicular to $P$; this gives a smoothing of the delta function of $P$. We fix a cut off function $\chi = \chi(u^2)$ satisfying $\chi \equiv 1$ on $P$ and which has compact support in $U^{-m} = \{ -\epsilon \leq  u^2 \leq \epsilon \}$ near $P$.

\begin{definition}\label{ansatz}
Given a wall $\mathbf{w}= ( -m ,P, \Theta)$ as in \eqref{wallcrossingfactor}, we let
\begin{equation}
\onewall = -\sum_{k> 0}\sum_{j}\sum_{n \perp m} a_{jk}^n\check{\delta}_{-m}(\bmc^{-km}\otimes \check{\partial}_n ) t^j
\end{equation}
be the {\em ansatz} corresponding to the wall $\mathbf{w}$, where 
$$
\check{\delta}_{-m} = \mathcal{F}(\chi \delta_{-m}) \in \Omega^{0,1}(\check{X}_0).
$$
\end{definition}

\begin{remark}
The definition \ref{ansatz} is motivated by Witten-Morse theory where we regard the plane $P \subset B_{0,-km}$ as stable submanifolds corresponding to the Morse function $Re(f_{-km})$ ($k \in \inte_+$) on $B_{0,-km}$ from a critical point of index $1$ at infinity, and
$$e^{-f_{-km} } \delta_{-m}$$
as the eigenform associated to that critical point which is a smoothing of the delta function supported on $P$. We can therefore treat our ansatz as a smoothing of a wall via the Fourier transform $\mathcal{F}$.

Adopting the notations from \cite{klchan-leung-ma}, we may write $g_{-m} = \lambda (u^2)^2$ and $\delta_{-m} = e^{-\lam g_{-m}} \mu_{-m}$ where $\mu_{-m} = (\frac{\lam}{\pi} )^{\half} du^2$.
\end{remark}

One can easily check that it gives a solution to the Maurer-Cartan equation.

\begin{prop}
$$
\bar{\partial}\onewall + \half [ \onewall, \onewall ] = 0,
$$
i.e. $\onewall$ satisfies the Maurer-Cartan (MC) equation of the Kodaira-Spencer complex $KS^*_{\check{X}_0} = \Omega^{0,*}(\check{X}_0,T^{1,0})$.
\end{prop}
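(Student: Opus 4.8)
The plan is to prove the stronger statement recorded in the footnote, namely that $\bar\partial\onewall = 0$ and $[\onewall,\onewall] = 0$ hold \emph{separately}. For the first, observe that $\bar\partial$ is a derivation of $PV^{*,*}(\check{X}_0)$, that each vector field $\bmc^{-km}\otimes\check\partial_n$ appearing in $\onewall$ is a holomorphic section of $T^{1,0}_{\check{X}_0}$ (the product of the holomorphic monomial $\bmc^{-km}$ with the constant-coefficient holomorphic vector field $\check\partial_n$), so that $\bar\partial(\bmc^{-km}\otimes\check\partial_n) = 0$, and that $\bar\partial t^j = 0$. Hence $\bar\partial\onewall$ reduces to $-\sum_{k,j,n} a^n_{jk}(\bar\partial\check\delta_{-m})(\bmc^{-km}\otimes\check\partial_n)t^j$, and it suffices to show the $(0,1)$-form $\check\delta_{-m}$ is $\bar\partial$-closed. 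By the identification \eqref{restrict_FT}, under which $\mathcal{F}$ intertwines $\bar\partial$ with the Witten differential $d_W = e^{-f_{-km}}\circ d\circ e^{f_{-km}}$ on the component $X_{0,-km}$, this amounts to checking that the form $\chi\delta_{-m}$ — which, together with the weight $e^{-f_{-km}}$ that $\mathcal{F}$ absorbs into the monomial $\bmc^{-km}$, is precisely the Witten zero-eigenform attached to the index-one critical point of $\Realpart(f_{-km})$ ``at infinity'' whose stable set is $P$ — is annihilated by $d_W$. Concretely $d_W(\chi\delta_{-m}) = e^{-f_{-km}}\, d\bigl(e^{f_{-km}}\chi\delta_{-m}\bigr)$, and since $\delta_{-m}$ is a multiple of $e^{-\lam(u^2)^2/2}du^2$, hence a closed $1$-form, the only surviving term is $e^{-f_{-km}}(d\chi\wedge\delta_{-m})$; but $d\chi = \chi'(u^2)\,du^2$ is itself a multiple of $du^2$, so $d\chi\wedge\delta_{-m} = 0$ and therefore $\bar\partial\check\delta_{-m} = 0$.

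For the vanishing of $[\onewall,\onewall]$, recall that the bracket on $\Omega^{0,*}(\check{X}_0,T^{1,0})$ is the Schouten--Nijenhuis bracket: written in terms of (antiholomorphic form)$\otimes$(polyvector) summands it produces wedges of the form-parts with Lie brackets of the vector-field-parts, plus terms in which a vector-field-part differentiates a coefficient of the other factor. The crucial point is that \emph{every} summand of $\onewall$ carries the same antiholomorphic one-form $\check\delta_{-m} = \mathcal{F}(\chi\delta_{-m})$, which points along the single fixed direction determined by $du^2$; consequently every form-factor arising in any term of $[\onewall,\onewall]$ is a multiple of $\check\delta_{-m}$, so each such term contains a factor $\check\delta_{-m}\wedge\check\delta_{-m} = 0$ and the whole bracket vanishes term by term. (The conditions $n\perp m$ in the definition of a wall and $\bar\partial t^j = 0$ guarantee that no anomalous contribution comes from a $\check\partial_n$ hitting a monomial $\bmc^{-k'm}$ or the formal parameter, since $\check\partial_n(\bmc^{-k'm})$ is proportional to $\langle m,n\rangle = 0$; but in fact the vanishing is already forced by $\check\delta_{-m}\wedge\check\delta_{-m} = 0$.) Adding the two parts gives $\bar\partial\onewall + \half[\onewall,\onewall] = 0$.

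The main obstacle is the bookkeeping in the first step: one must verify that $\mathcal{F}$ genuinely carries the ansatz $\check\delta_{-m}(\bmc^{-km}\otimes\check\partial_n)t^j$ to $e^{-f_{-km}}$ times a $d$-closed form on $X_{0,-km}$, i.e. that the exponential weight coming from the multivalued area function $f_{-km}$ of Remark \ref{localdgLa} is \emph{exactly} reproduced by the monomial $\bmc^{-km}$ under the transform, so that $d_W$-closedness holds on the nose and not merely up to exponentially small error. Once this matching of $e^{-f_{-km}}$ with $\bmc^{-km}$ is in place — together with the elementary observations that $\delta_{-m}$ is closed and that $d\chi$ is proportional to $\delta_{-m}$ — the proposition follows from the two derivation-and-wedge computations above.
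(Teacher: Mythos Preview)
Your overall strategy matches the paper's: the footnote in the introduction already records that both $\bar\partial\onewall$ and $[\onewall,\onewall]$ vanish separately, and the paper offers no further argument beyond ``one can easily check''. Your bracket computation is correct --- every summand carries the same $(0,1)$-form $\check\delta_{-m}$, and the Kodaira--Spencer bracket of two elements $\alpha X$ and $\beta Y$ (with $\alpha,\beta\in\Omega^{0,1}$ and $X,Y$ holomorphic vector fields) always lands in $\alpha\wedge\beta\otimes T^{1,0}$, so $\check\delta_{-m}\wedge\check\delta_{-m}=0$ forces each term to vanish.

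The $\bar\partial$ argument, however, has a genuine slip. You compute $d_W(\chi\delta_{-m}) = e^{-f_{-km}}d(e^{f_{-km}}\chi\delta_{-m})$ on $X_{0,-km}$ and claim ``the only surviving term is $e^{-f_{-km}}(d\chi\wedge\delta_{-m})$''. Expanding carefully gives
\[
d_W(\chi\delta_{-m}) = df_{-km}\wedge\chi\delta_{-m} + d\chi\wedge\delta_{-m} + \chi\,d\delta_{-m},
\]
and while the last two terms vanish (both are proportional to $du^2\wedge du^2$), the first does \emph{not}: $df_{-km}$ points in the $\tilde e_1=-m\lrcorner g$ direction (i.e.\ is a multiple of $du^1$), so $df_{-km}\wedge\delta_{-m}\propto du^1\wedge du^2\neq 0$. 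Your final paragraph is closer to the truth: the inverse transform of $\check\delta_{-m}\,\bmc^{-km}\check\partial_n$ already carries the weight $e^{-f_{-km}}$, so one should compute $d_W(e^{-f_{-km}}\chi\delta_{-m})=e^{-f_{-km}}d(\chi\delta_{-m})=0$, not $d_W(\chi\delta_{-m})$. The two descriptions in your first and third paragraphs are inconsistent, and the explicit calculation you wrote follows the wrong one.

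A cleaner route avoids the Fourier transform entirely. Since $\bmc^{-km}\check\partial_n$ is holomorphic, it suffices to show $\bar\partial\check\delta_{-m}=0$ as a $(0,1)$-form. But $\check\delta_{-m}$ is (up to constants) the $(0,1)$-part of $\check p^*(\chi\delta_{-m})$, and $\chi\delta_{-m}=g(u^2)\,du^2$ is a closed $1$-form on $B_0$. For any closed base $1$-form $\alpha$, the $(0,1)$-part of $\check p^*\alpha$ is $\bar\partial$-closed: writing $\alpha=\sum_k\alpha_k(x)\,dx^k$ and using $dx^k=c(\delta^k+\bar\delta^k)$, one finds $\bar\partial(\alpha^{0,1})\propto\sum_{j<k}(\partial_{x^j}\alpha_k-\partial_{x^k}\alpha_j)\,\bar\delta^j\wedge\bar\delta^k$, which vanishes exactly when $d\alpha=0$. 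This gives $\bar\partial\check\delta_{-m}=0$ without any bookkeeping of exponential weights.
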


Since $\check{X}_0 \cong (\mathbb{C}^*)^n$ has no non-trivial deformations, the element $\onewall$ must be gauge equivalent to $0$, which means that we can find some $\facc \in \Omega^{0,0}(\check{X}_0,T^{1,0})$ such that 
\begin{equation}\label{gaugeequation}
e^{\facc}* 0 = \onewall.
\end{equation}
The solution $\facc$ is not unique and we are going to choose a particular gauge fixing to get a unique solution, and study its asymptotic behaviour when $\lam \rightarrow \infty$. We make a choice by choosing a homotopy operator $\check{H}$ acting on $KS^*(\check{X}_0)$. We prefer to write a homotopy $H$ for the complex $\Omega^{1,*}_{cf}(X_0)$ and obtain a homotopy $\check{H}$ via the above transform \ref{restrict_FT}.

\subsubsection{Construction of homotopy $H$} 
We use the coordinates $u^i$'s on $B_{0,m}$ described in definition \ref{orthonormalcoordinates} and define a homotopy retract of $\Omega^{1,*}_{cf}(\mathcal{L}X_0)$ to its cohomology\footnote{There is no canonical choice for the homotopy operator $\check{H}$ and we simply fix one for our convenience. Notice that our choice here is independent of the wall $\mathbf{w}$ we fixed in this section.}. Since we are in the case that $B_0 = \real^n$ where $TB_0$ is trivial, it is enough to define a homotopy for $\Omega^{0,*}_{cf}(\mathcal{L}X_0)$. Due to the fact the $\displaystyle\mathcal{L}X_0=\coprod_{m\in \inte^n}X_{0,m} = \coprod_{m} B_{0,m} \times (M_\real/M)$ and we are considering differential forms constant along fiber, it is sufficient to define a homotopy for $(\Omega^*(B_{0,m}), e^{-f_m} d e^{f_m})$ for each $m$, retracting to its cohomology $H^*(B_{0,m}) = \comp$ which is generated by constant functions on $B_{0,m}$. 

We fix a point $(u^1_0,\dots,u^n_0) \in B_{0,m}$ which is in $H_-$ under the natural projection and use $u^{\perp} = (u^2,\dots,u^n)$ as coordinates for the codimension $1$ plane $B^{\perp}_{0,m}= \{ u^1=u^1_0\}$. We decompose $\alpha \in \Omega^*(B_{0,m})$ as 
$$
\alpha = \alpha_0 + du^1 \wedge \alpha_1 .
$$
We can choose a contraction $\rho_{\perp} : \real \times B^{\perp}_{0,m} \rightarrow B^{\perp}_{0,m}$ given by $\rho_{\perp}(t,u^{\perp}) = t(u^{\perp}-u^{\perp}_0)+u^{\perp}_0$. 

\begin{definition}\label{real2homotopy}
We define $H_m : \Omega^*(B_{0,m}) \rightarrow \Omega^*(B_{0,m})[-1]$ by
$$
(e^{f_m} H_m e^{-f_m} \alpha)(u^1,u^{\perp})= \int_{0}^1 \rho_{\perp}^*(\alpha_0|_{B^{\perp}_{0,m}}) +\int_{u_0^1}^{u^1} \alpha_1,
$$
$P_m e^{-f_m} : \Omega^*(B_{0,m}) \rightarrow H^*(B_{0,m})$ be the evaluation at the point $(u^1_0,\dots,u^r_0)$ and $e^{f_m}\iota_m : H^*(B_{0,m}) \rightarrow \Omega^*(B_{0,m})$ be the embedding of constant functions on $B_{0,m}$.
\end{definition}

\begin{definition}\label{pathspacehomotopy}
We fix a base point $q_{m} \in B_{0,m}$ on each connected component $B_{0,m}$ to be the fixed point in the above definitions, to define $H_m$, $P_m$ and $\iota_m$ as above. They can be extended to $\Omega^{1,*}_{cf}(\mathcal{L}X_0)$, and they are denoted by $H$, $P$ and $\iota$ respectively. The corresponding operator acting on $KS^*(\check{X}_0)$ obtained via the Fourier transform \ref{restrict_FT} is denoted by $\check{H}$, $\check{P}$ and $\check{\iota}$ respectively.
\end{definition}

\begin{remark}
We should impose a rapid decay assumption on $\Omega^{1,*}_{cf}(\mathcal{L}X_0)$ along the Fourier mode $m$. Therefore $H^{1,*}_{cf}(\mathcal{L}X_0)$ refers to those locally constant functions (i.e. constant on each connected component) satisfying the rapid decay assumption. Obviously the operators $H$, $P$ and $\iota$ preserve this decay condition.
\end{remark}

\subsubsection{Solving for the gauge $\check{\facs}$}
In the rest of this section, we will fix $q_{m}$ to be the same point upon projecting to $B_0$ which is far away from the support of the cut off function $\chi$. We impose the gauge fixing condition $\check{P}\check{\facs}= 0 $, or equivalently,
$$
\check{\facs} = \check{H} \bar{\partial}  \check{\facs}
$$
to solve the equation \eqref{gaugeequation} order by order. This is possible because of the following lemma.
\begin{lemma}
Among solutions of $e^{\check{\facs}} * 0 = \onewall$, there exists a unique one satisfying $\check{P}\check{\facs} = 0$.
\end{lemma}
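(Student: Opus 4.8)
The plan is to solve the gauge equation $e^{\check{\facs}}*0 = \onewall$ recursively in the formal parameter $t$, using the homotopy operator $\check{H}$ to invert $\bar{\partial}$; uniqueness of the solution with $\check{P}\check{\facs}=0$ will then come for free, since each homogeneous component of $\check{\facs}$ is forced. Write $\onewall = \sum_{j\geq 1}\onewall_j t^j$ with $\onewall_j\in KS^1(\check{X}_0)$ (there is no degree-zero term in $t$ because $Log(\Theta)\in \mathfrak{h}\otimes\mathbf{m}$) and look for $\check{\facs} = \sum_{j\geq 1}\check{\facs}_j t^j$ with $\check{\facs}_j\in KS^0(\check{X}_0)$. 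Expanding the gauge action — recall it has the shape $e^{x}*0 = \sum_{k\geq 0}\tfrac{(\mathrm{ad}_x)^k}{(k+1)!}(\bar{\partial}x)$, up to the sign convention for $*$ — and collecting the coefficient of $t^j$ produces an equation
\[
\bar{\partial}\check{\facs}_j \;=\; \onewall_j - Q_j(\check{\facs}_1,\dots,\check{\facs}_{j-1}) \;=:\; W_j,
\]
in which $Q_j$ is a universal Lie polynomial in the lower components: every term of $Q_j$ is an iterated bracket, and since all of its entries lie in $\mathbf{m}$, only $\check{\facs}_1,\dots,\check{\facs}_{j-1}$ can actually occur. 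Thus the recursion reduces to the following: given $\check{\facs}_1,\dots,\check{\facs}_{j-1}$ already built (with $\check{P}\check{\facs}_{j'}=0$ and $W_j$ thereby determined), produce a $\check{\facs}_j$ with $\bar{\partial}\check{\facs}_j = W_j$ and $\check{P}\check{\facs}_j = 0$.

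The one step that is not pure bookkeeping is to verify that $W_j$ is $\bar{\partial}$-closed, and this is precisely where the Maurer--Cartan property of $\onewall$ (the preceding Proposition) gets used. The argument I would run here is the standard one from deformation theory: put $\check{\facs}^{(j-1)} = \sum_{j'<j}\check{\facs}_{j'}t^{j'}$ and $\onewall^{(j-1)} := e^{\check{\facs}^{(j-1)}}*0$; since the gauge action carries Maurer--Cartan solutions to Maurer--Cartan solutions and $0$ is one, $\onewall^{(j-1)}$ solves \eqref{intro:MCequation}; by the inductive choice $\onewall^{(j-1)}\equiv\onewall\pmod{t^j}$, so that $W_j = (\onewall-\onewall^{(j-1)})|_{t^j}$. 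Subtracting the two Maurer--Cartan equations and noting that $[\onewall,\onewall]-[\onewall^{(j-1)},\onewall^{(j-1)}] = [\onewall+\onewall^{(j-1)},\,\onewall-\onewall^{(j-1)}]$ is $O(t^{j+1})$ (its two factors are $O(t)$ and $O(t^j)$ respectively), one reads off $\bar{\partial}W_j=0$. Now via the identification \eqref{restrict_FT} the homotopy retract underlying $(\check{H},\check{P},\check{\iota})$ is the one contracting each twisted complex $(\Omega^*(B_{0,m}),e^{-f_m}de^{f_m})$ onto its cohomology, and the latter is concentrated in degree $0$; hence $\check{P}$ annihilates $KS^1$, and the retract identity $\bar{\partial}\check{H}+\check{H}\bar{\partial} = \mathrm{id}-\check{\iota}\check{P}$ applied to $W_j$ yields $\bar{\partial}(\check{H}W_j)=W_j$. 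So I would take $\check{\facs}_j := \check{H}W_j$, and then $\check{P}\check{\facs}_j = \check{P}\check{H}W_j = 0$ because $\check{P}\check{H}=0$: in Definition \ref{real2homotopy} the operator $H_m$ is built from integrals emanating from the same base point $q_m$ at which $P_m$ is the evaluation, so $P_m H_m=0$. This constructs the required solution.

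Uniqueness is then a short induction: if $\check{\facs}$ and $\check{\facs}'$ both satisfy $e^{\check{\facs}}*0=\onewall=e^{\check{\facs}'}*0$ together with $\check{P}\check{\facs}=\check{P}\check{\facs}'=0$, and $\check{\facs}_{j'}=\check{\facs}'_{j'}$ for all $j'<j$, then the $Q_j$-terms coincide, so subtracting the order-$t^j$ equations gives $\bar{\partial}(\check{\facs}_j-\check{\facs}'_j)=0$; writing $\eta:=\check{\facs}_j-\check{\facs}'_j\in KS^0$ we have $\check{P}\eta=0$, and since $\check{H}$ vanishes on $KS^0$ for degree reasons the retract identity collapses to $\check{H}\bar{\partial}\eta = \eta-\check{\iota}\check{P}\eta$, which forces $\eta=0$. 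The main obstacle, to repeat, is establishing $\bar{\partial}W_j=0$; granting that, the remainder is the familiar homological-perturbation recursion together with the trivial check that the explicit operators of Definitions \ref{real2homotopy}--\ref{pathspacehomotopy} obey the side relation $\check{P}\check{H}=0$.
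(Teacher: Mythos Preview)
Your argument is correct, but it follows a different route from the paper's. The paper takes existence of \emph{some} solution $\check{\facs}$ for granted (this was noted just before the lemma: $\check{X}_0\cong(\comp^*)^n$ has no nontrivial deformations), observes that the residual freedom in the gauge equation is precisely right BCH-composition $\check{\facs}\mapsto\check{\facs}\bullet\check{\sigma}$ by a $\bar\partial$-closed $\check{\sigma}$, and then solves $\check{P}(\check{\facs}\bullet\check{\sigma})=0$ for $\check{\sigma}$ order by order in $t$. Uniqueness is implicit in that recursion, since at each order the equation fixes $\check{\sigma}_k$.

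Your approach instead builds $\check{\facs}$ directly from the homotopy, setting $\check{\facs}_j=\check{H}W_j$ once you have checked $\bar\partial W_j=0$ via the Maurer--Cartan property of $\onewall$. This is a bit longer but more self-contained: it does not presuppose existence, and it produces exactly the explicit relation $\check{\facs}=\check{H}\bar\partial\check{\facs}$ that the paper records immediately after the lemma. The paper's argument is quicker and more conceptual (separating existence from gauge fixing), while yours makes the side conditions $\check{P}\check{H}=0$ and the vanishing of $\check{P}$ on $KS^1$ do the work explicitly.
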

\begin{proof}
Notice that for any $\check{\sigma} = \check{\sigma}_1 + \check{\sigma}_2 + \dots \in KS^*(\check{X}_0) [[t]] \cdot (t)$ where $\check{\sigma}_k$ is homogeneous of degree $k$ and with $\bar{\partial}\check{\sigma} = 0$, we have $e^{\check{\sigma}} * 0 = 0$, and hence $e^{\check{\facs} \bullet \check{\sigma}} * 0 = \onewall$ is still a solution for the same equation. With $\check{\facs} \bullet \check{\sigma}$ given by the Baker-Campbell-Hausdorff formula as
$$
\check{\facs} \bullet \check{\sigma} = \check{\facs}+\check{\sigma} + \half \{ \check{\facs},\check{\sigma}\} + \dots,
$$
we solve the equation $\check{P} (\check{\facs} \bullet \check{\sigma}) = 0$ order by order under the assumption that $\bar{\partial} \check{\sigma} = 0$.
\end{proof}

Under the gauge fixing condition $\check{P}\check{\facs} = 0$, we see that the unique solution to Equation \eqref{gaugeequation} can be found iteratively using the homotopy $\check{H}$. We analyze the behavior of $\check{\facs}$ as $\lam \rightarrow \infty$, showing that $\check{\facs}$ has an asymptotic expansion whose leading order term is exactly given by $\check{\facs}_0$ on $\check{X}_0\setminus \check{p}^{-1}(P)$.

\begin{prop}\label{prop:MC_sol_one_wall}
For $\check{\facs}$ defined by solving the equation \eqref{gaugeequation} under the gauge fixing condition, we have
$$
\check{\facs} = \check{\facs}_0 + \sum_{\substack{k,j\geq 1\\n\perp m}} \mathcal{O}_{loc}(\lam^{-1/2}) (\bmc^{-km}\check{\partial}_n) t^j,
$$
on $\check{X}_0 \setminus \check{p}^{-1}(P)$.
\end{prop}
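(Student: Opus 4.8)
\emph{Proof plan.} The plan is to solve \eqref{gaugeequation} recursively with respect to the $t$-adic filtration, to identify the leading (linear-in-$\onewall$) part of the solution with a Gaussian regularisation of $\check{\facs}_0$, and to show that every higher-order correction is $\mathcal{O}_{loc}(\lam^{-1/2})$ on $\check{X}_0\setminus\check{p}^{-1}(P)$. First I would pass to the Fourier-transformed picture \eqref{restrict_FT}, in which $\bar{\partial}$ becomes the Witten differential $d_W=e^{-f_m}de^{f_m}$ (Remark \ref{localdgLa}) and $\check{H},\check{P}$ are the operators of Definition \ref{pathspacehomotopy}. Since $\check{H}$ kills $0$-forms, the homotopy identity $\mathrm{id}-\check{\iota}\check{P}=\bar{\partial}\check{H}+\check{H}\bar{\partial}$ shows that the gauge fixing $\check{P}\check{\facs}=0$ is equivalent to $\check{\facs}=\check{H}\bar{\partial}\check{\facs}$. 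Expanding $e^{\check{\facs}}*0=-\bar{\partial}\check{\facs}+\half[\check{\facs},\bar{\partial}\check{\facs}]-\cdots=\onewall$ and applying $\check{H}$ then yields the fixed-point equation $\check{\facs}=-\check{H}\onewall+\check{H}\big(\half[\check{\facs},\bar{\partial}\check{\facs}]-\cdots\big)$, which, because $\onewall\in(t)\,KS^1_{\check{X}_0}[[t]]$ and the bracket strictly raises $t$-order, is solved uniquely by iteration and, upon unwinding, expresses $\check{\facs}$ as a sum over directed trivalent trees (in the manner of Kuranishi \cite{Kuranishi65}) with leaves decorated by the homogeneous components of $\onewall$, internal vertices by the Kodaira--Spencer bracket, and internal edges and the root by $\check{H}$. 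Because every Fourier mode in $\onewall$ has the form $\bmc^{-km}\check{\partial}_n$ with $n\perp m$, and $[\bmc^{-km}\check{\partial}_n,\bmc^{-k'm}\check{\partial}_{n'}]=0$, each summand of $\check{\facs}$ is again a locally smooth coefficient times such a mode; hence the \emph{shape} of the error asserted in the statement is automatic, and only its \emph{size} needs to be controlled.

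Next I would treat the leading term $\check{\facs}_{\mathrm{lin}}:=-\check{H}\onewall=\sum_{k,j,n}a^n_{jk}\,(\check{H}\check{\delta}_{-m})\,(\bmc^{-km}\check{\partial}_n)\,t^j$. Using the explicit formula of Definition \ref{real2homotopy} together with the splitting $\chi\delta_{-m}=\chi(u^2)\big(\tfrac{\lam}{\pi}\big)^{\half}e^{-\lam(u^2)^2/2}\,du^2$ (which has no $du^1$-component) and the relation $d_W=e^{-f_m}de^{f_m}$, one finds that $\check{H}\check{\delta}_{-m}$ is represented by the function $u^2\mapsto\int_{u^2_0}^{u^2}\chi(v)\delta_{-m}(v)\,dv$, where $u^2_0$ is the $\tilde{e}_2$-coordinate of the base point $q_m\in H_-$, chosen beyond $\supp\chi$. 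Since $\delta_{-m}$ has total mass $1$ and Gaussian tails, for $u$ in any compact subset of $\check{X}_0\setminus\check{p}^{-1}(P)$ this integral equals $\mathbbm{1}_{H_+}(u)$ up to $\mathcal{O}(e^{-c\lam})$, hence up to $\mathcal{O}_{loc}(\lam^{-1/2})$. Recognising $\sum_{k,j,n}a^n_{jk}\mathbbm{1}_{H_+}\bmc^{-km}\check{\partial}_n t^j$ as exactly $\check{\facs}_0$ on $\check{X}_0\setminus\check{p}^{-1}(P)$, this gives $\check{\facs}_{\mathrm{lin}}=\check{\facs}_0+(\text{error of the asserted type})$.

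Finally I would bound the contribution of each tree with at least two leaves. The key mechanism is a gain of a factor $\lam^{-1}$ each time $\check{\partial}_n$ (with $n\perp m$) differentiates a quantity depending on $\check{X}_0$ only through the base $B_0$: in the complex coordinates $\log\bmc^j\sim-2\pi i(y^j+i\lam x^j)$ the derivative $\partial/\partial\log\bmc^j$ hits the base variables $x^j$ with weight $\lam^{-1}$. In every non-trivial bracket the term in which the two holomorphic vector fields are bracketed vanishes ($n,n'\perp m$), so each such bracket is a sum of terms in which $\check{\partial}_n$ differentiates either a Gaussian factor $\delta_{-m}$ (producing $\lam^{-1}\cdot(-\lam u^2)\delta_{-m}$) or a step-factor $\int^{u^2}\chi\delta_{-m}$ (producing $\lam^{-1}\chi\delta_{-m}$); either way the resulting form is supported within $\mathcal{O}(\epsilon)$ of $P$, is $\mathcal{O}(e^{-c\lam})$ on compacts away from $P$, and has $L^1$-mass of order $\lam^{-1/2}$ along the $u^2$-direction transverse to $P$. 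Applying $\check{H}$ (an integration along rays through $q_m$, transverse to $P$) then produces a function that is $\mathcal{O}_{loc}(\lam^{-1/2})$ on all of $\check{X}_0\setminus\check{p}^{-1}(P)$. Feeding these estimates up each tree — higher trees only carry more brackets, hence more powers of $\lam^{-1/2}$ — and summing the finitely many trees at each $t$-order yields $\check{\facs}-\check{\facs}_0=\sum_{k,j\geq1,\,n\perp m}\mathcal{O}_{loc}(\lam^{-1/2})\,(\bmc^{-km}\check{\partial}_n)\,t^j$ on $\check{X}_0\setminus\check{p}^{-1}(P)$, as claimed.

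The step I expect to be the main obstacle is this last one: making rigorous that $\check{H}$ — which is non-local and integrates all the way back to the fixed base point $q_m$ — does not convert the $\mathcal{O}(1)$-near-$P$ bracket terms into an $\mathcal{O}(1)$ step that would spoil the leading-order identity. This requires the Witten--Morse decay estimates of \cite{klchan-leung-ma,HelSj4}: one must control the competition between the Gaussian weight of $\delta_{-m}$, the exponential weights $e^{\pm f_{-km}}$ implicit in $d_W$ and in $\check{H}$, and the contraction $\rho_\perp$, uniformly in the Fourier mode $km$ (using the imposed rapid decay in $m$), so as to obtain the clean $\lam^{-1/2}$ rate rather than merely $\mathcal{O}(1)$.
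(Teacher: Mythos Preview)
The paper is a survey and does \emph{not} supply a proof of this proposition; it is quoted from \cite{kwchan-leung-ma} (referred to as \cite{clm0} in Section~\ref{scattering_MCequation}). So there is no ``paper's own proof'' to compare against beyond the general methodology sketched in Sections~\ref{onewall}--\ref{twowalls}. Judged on its own, your plan is sound and matches that methodology: rewrite the gauge condition as $\check\varphi=\check H\bar\partial\check\varphi$, expand $e^{\check\varphi}*0=\check\Pi$ into a fixed-point/tree recursion, identify the linear piece $-\check H\check\Pi$ with the Gaussian antiderivative (hence with $\check\varphi_0$ up to an exponentially small error off $P$), and then show every extra bracket costs a factor $\lam^{-1/2}$ after applying $\check H$. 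Your key observation---that $\check\partial_n$ hits base-only coefficients through the $\lam^{-1}\partial_{x}$ part of \eqref{Bside_subbundle_1}---is exactly the mechanism, and your two model computations ($\check\partial_n$ acting on the step and on the Gaussian) give the correct $\lam^{-1/2}$ after one integration.

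One point to clean up: the sentence ``$[\bmc^{-km}\check\partial_n,\bmc^{-k'm}\check\partial_{n'}]=0$, each summand of $\check\varphi$ is again a locally smooth coefficient times such a mode'' reads as if the full KS bracket vanishes, which would make the tree expansion collapse to $\check\varphi=-\check H\check\Pi$ exactly. That is not what happens (and you correctly analyse the non-vanishing pieces two paragraphs later). What \emph{is} true, and what you should say there, is that the $[V,W]$-part of the KS bracket vanishes by $n,n'\perp m$, so the surviving terms are of the form $(\text{coeff})\cdot\bmc^{-(k+k')m}\check\partial_n$ or $(\text{coeff})\cdot\bmc^{-(k+k')m}\check\partial_{n'}$, which preserves the asserted shape. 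Separately, note that $[\check\Pi,\check\Pi]=0$ holds for a different reason (all $(0,1)$-form factors are proportional to a single $du^2$, cf.\ the footnote in the Introduction), and this is \emph{not} what controls the $[\check\varphi,\bar\partial\check\varphi]$ terms in your recursion. Finally, in your leading-term computation you should track the conjugation by $e^{\pm f_{-km}}$ built into $H_m$ (Definition~\ref{real2homotopy}); on the $\check X_0$ side this is absorbed by the $\bmc^{-km}$ factor under the transform \eqref{restrict_FT}, so the conclusion is unchanged, but as written your formula for $\check H\check\delta_{-m}$ silently drops it.
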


\begin{notation}
We say a function $f$ on an open subset $U \subset B_0$ belongs to $\mathcal{O}_{loc}(\lam^{-l})$ if it is bounded by $C_K \lam^{-l}$ for some constant (independent of $\lam$) $C_K$ on every compact subset $K \subset U$.
\end{notation}

\subsection{Maurer-Cartan solutions and scattering}\label{twowalls}
In this section, we recall the main result in \cite{clm0} which interprets the scattering process, producing the monodromy free diagram $\mathcal{S}(\mathscr{D})$ from a standard scattering diagram $\mathscr{D}$ as asypmtotic limit of solving a Maurer-Cartan (MC) equation when $\lam \rightarrow \infty$ (which corresponds to the Large complex structure limit of $\check{X}_0$).

\subsection{Solving Maurer-Cartan equations in general}\label{algebraicsolveMC}
Since we are concerned with solving the Maurer-Cartan equation 
\begin{equation}\label{algMCequation}
d\Phi + \half [\Phi,\Phi] = 0
\end{equation}
for a dgLa $(L,d, [\cdot,\cdot])$ over the formal power series ring $R$, we can solve the non-linear equation by solving linear equations inductively. We use Kuranishi's method which solves the MC equation with the help of a homotopy $H$ retracting $L^*$ to its cohomology $H^*(L)$; see e.g. \cite{Morrow-Kodaira_book}.

Instead of the MC equation, we look for solutions $\Phi = \Phi_1 + \Phi_1 + \dots+\Phi_k +\dots$ (here $\Phi_k \in L^1 \otimes \mathbf{m}^k$ homogeneous of order $k$) of the equation
\begin{equation}\label{pseudoMC}
\Phi= \incoming - \half H [\Phi,\Phi],
\end{equation}
and we have the following relation between solution to two equations.

\begin{prop}
Suppose that $\Phi $ satisfies the equation \eqref{pseudoMC}. Then $\Phi$ satisfies the MC equation \eqref{algMCequation} if and only if $P[\Phi,\Phi] = 0$.
\end{prop}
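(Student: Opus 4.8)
\emph{Proof sketch.} The plan is to measure the failure of the Maurer--Cartan equation \eqref{algMCequation} by the element
$$E := d\Phi + \half[\Phi,\Phi] \in L^2\otimes\mathbf{m},$$
which vanishes precisely when $\Phi$ solves \eqref{algMCequation}, and then to show that $E$ satisfies the self-referential identity
$$E = \half\,\iota P[\Phi,\Phi] + H[E,\Phi],$$
where $(H,P,\iota)$ is the deformation-retract data, so that $dH+Hd = \mathrm{id}-\iota P$ and $P\iota=\mathrm{id}$. Granting this identity the equivalence follows at once. If $P[\Phi,\Phi]=0$ then $E=H[E,\Phi]$; since $\Phi\in L^1\otimes\mathbf{m}$ has no constant term, $[\cdot,\Phi]$ strictly raises the $\mathbf{m}$-adic order, and since the initial error already lies in $L^2\otimes\mathbf{m}^2$ (because $d\incoming=0$ and $[\Phi,\Phi]$ begins in order two), an induction on the $\mathbf{m}$-adic filtration forces $E\in\bigcap_N L^2\otimes\mathbf{m}^N=0$. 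Conversely, if $E=0$ the identity gives $\iota P[\Phi,\Phi]=0$, hence $P[\Phi,\Phi]=0$ since $\iota$ is injective.

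To establish the identity I would apply $d$ to the pseudo--Maurer--Cartan equation \eqref{pseudoMC} and feed in $dH=\mathrm{id}-\iota P-Hd$, obtaining
$$d\Phi = d\incoming - \half[\Phi,\Phi] + \half\,\iota P[\Phi,\Phi] + \half\,Hd[\Phi,\Phi].$$
For the last term, the graded Leibniz rule gives $d[\Phi,\Phi]=2[d\Phi,\Phi]$ for $\Phi$ of odd degree in $L$; substituting $d\Phi=E-\half[\Phi,\Phi]$ and then invoking the graded Jacobi identity $[[\Phi,\Phi],\Phi]=0$ yields $[d\Phi,\Phi]=[E,\Phi]$, so $\half\,Hd[\Phi,\Phi]=H[E,\Phi]$. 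Finally $d\incoming=0$, as $\incoming$ is the $d$-closed input of Kuranishi's construction (in the situation of interest $\incoming=\incoming_{\mathbf{w}_1}+\incoming_{\mathbf{w}_2}$, each summand being a genuine solution of \eqref{intro:MCequation}). Rearranging the displayed line gives exactly $E=\half\,\iota P[\Phi,\Phi]+H[E,\Phi]$.

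The argument is short, so I expect the only delicate points to be the Koszul-sign bookkeeping behind the two algebraic identities $d[\Phi,\Phi]=2[d\Phi,\Phi]$ and $[[\Phi,\Phi],\Phi]=0$ --- which genuinely need checking for an odd-degree element rather than being quoted --- and making sure the $\mathbf{m}$-adic induction closes, i.e. that the error starts in $\mathbf{m}^2$ and that each application of $H[\cdot,\Phi]$ raises the filtration degree by at least one. Both become routine once the grading on $L$ and the filtration on $L\otimes R$ are fixed.
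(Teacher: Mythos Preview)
Your argument is correct and is exactly the standard Kuranishi obstruction computation. The paper does not supply a proof of this proposition at all; it simply states it and points to the literature (Kuranishi's method, as in \cite{Morrow-Kodaira_book}), so there is nothing to compare against beyond noting that your derivation of the identity
\[
E \;=\; d\incoming + \tfrac12\,\iota P[\Phi,\Phi] + H[E,\Phi]
\]
and the ensuing $\mathbf{m}$-adic bootstrap is precisely the textbook argument.

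One small remark: the proposition as written does not explicitly record the hypothesis $d\incoming=0$, but as you observe this is implicit in the Kuranishi setup (the input $\incoming$ lands in the image of $\iota$, hence in the harmonic/closed representatives), and in the paper's application each $\incoming^{(i)}$ is $\bar\partial$-closed outright. Your identification of this point, together with the two sign checks $d[\Phi,\Phi]=2[d\Phi,\Phi]$ and $[[\Phi,\Phi],\Phi]=0$ for odd $\Phi$, is exactly what is needed and your verification of each is fine. The observation that $E$ starts in $\mathbf{m}^2$ is true but superfluous: $E\in\mathbf{m}$ already suffices to launch the induction, since $H[\cdot,\Phi]$ raises the filtration by one.
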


Since we are considering the local case and there is no higher cohomology in $\check{X}_0 = (\comp^*)^n$, we can look at the above equation \eqref{pseudoMC} and try to solve it order by order. There is also a combinatorial way to write down the solution $\Phi$ from the input $\incoming$ in terms of summing over trees.

Given a directed trivalent planar $k$-tree $T$ as in Definition \ref{dtree}, we define an operation
$$
\mathfrak{l}_{k,T} : L^{\otimes k } \rightarrow L[1-k],
$$
by
\begin{itemize}
\item[(1)]
aligning the inputs at the $k$ semi-infinite incoming edges,
\item[(2)]
applying the Lie bracket $[\cdot,\cdot]$ to each interior vertex, and
\item[(3)]
applying the homotopy operator $-\half H $ to each internal edge and the outgoing semi-infinite edge $e_{out}$.
\end{itemize}
We then let
$$\mathfrak{l}_{k} = \sum_{T} \mathfrak{l}_{k,T},$$
where the summation is over all directed trivalent planar $k$-trees. Finally if we define $\Phi$ by
\begin{equation}\label{solve_sum_over_trees}
\Phi = \sum_{k\geq 1 } \mathfrak{l}_{k}(\incoming,\dots.\incoming),
\end{equation}
and $\varXi$ by
\begin{equation}
\varXi = \sum_{k\geq 2 } \mathfrak{l}_{k}(\incoming,\dots.\incoming),
\end{equation}
then $\Phi = \incoming + \varXi$ is the unique solution to Equation \eqref{pseudoMC}.

\subsection{Main results}\label{sec:main_results_higher_dim}
Suppose that we are given a standard scattering diagram consisting of two non-parallel walls $\mathbf{w}_i = (m_i,P_i,\Theta_i)$ intersecting transversally at the origin, with the wall crossing factor $\Theta_i$ of the form
\begin{equation}
\label{WC_factor_hd}
Log(\Theta_i) = \sum_{k> 0}\sum_{j}\sum_{n\perp m_i} a_{jk}^n(i) \bmc^{-km_i}\check{\partial}_{n} t^{j},
\end{equation}
for $i=1,2$. 

Associated to the wall $\mathbf{w}_i$, there is an ansatz $\incoming^{(i)}$ defined by 
\begin{equation}\label{ansatz_hd}
\incoming^{(i)} = -\sum_{k> 0}\sum_{j}\sum_{n\perp m_i} a_{jk}^n(i) \check{\delta}_{-m_i}(\bmc^{-km_i}\check{\partial}_{n}) t^{j},
\end{equation}
where $\check{\delta}_{-m_i}$ is a smoothing of a delta function supported on $P_i$ as in Section \ref{sec:ansatz_one_wall}. We can take the input data
$$\incoming = \incoming^{(1)}+\incoming^{(2)},$$
and obtain a solution $\Phi$ to the MC equation by the algebraic process as in Section \ref{algebraicsolveMC} using the same homotopy operator $H$ in Definition \ref{real2homotopy}.

In this case the intersection $P_1 \cap P_2$ will be of codimension $2$. We choose an orientation of $P_1 \cap P_2$ such that the orientation of $P_1 \cap P_2 \oplus \real \cdot m_1 \lrcorner g \oplus \real \cdot m_2 \lrcorner g$ agrees with that of $B_0$. We can choose polar coordinates $(r,\ang)$ for $(P_1 \cap P_2)^{\perp}$ such that we have the following figure in $(P_1 \cap P_2)^{\perp}$ parametrized by $(r,\ang)$.

\begin{figure}[h]
\begin{center}
\includegraphics[scale=0.7]{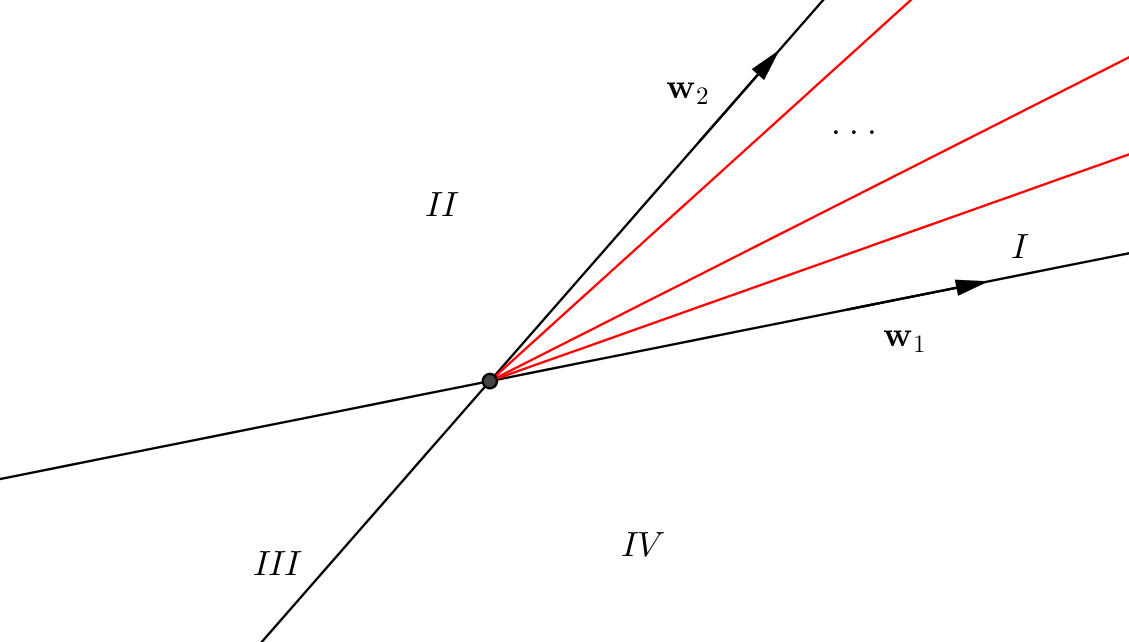}
\end{center}
\caption{}\label{fig:raytheta}
\end{figure}

The possible new walls are supported on codimension $1$ half planes $P_a$ parametrized by $a \in (\inte_{> 0 })^2$ of the form
$$P_a = P_1 \cap P_2 + \real_{\geq 0} \cdot m_a \lrcorner g,$$
where the Fourier mode $m_a  = a_1 m_1 + a_2 m_2$. We let $Plane(N_0) \subset (\inte_{> 0 })^2_{prim}$ denote the subset of $a \in (\inte_{> 0 })^2_{prim}$ whose Fourier modes $m_a$ are involved in solving the MC equation modulo $\mathbf{m}^{N_0+1}$.

Fixing each order $N_0$ and consider the solution $\Phi$ modulo $\mathbf{m}^{N_0+1}$, we remove a closed ball $\overline{B(r_{N_0})}$ (for some $r_{N_0}$ large enough) centered at the origin and consider the annulus $A_{r_{N_0}} = B_0 \setminus \overline{B(r_{N_0})}$ to study the monodromy around it. Restricting to $A = A_{r_{N_0}}$, our solution $\Phi$ will have the following decomposition as mentioned in Equation \eqref{eqn:Phi_decomposition}
$$
\Phi = \incoming + \sum_{a} \Phi^{(a)},
$$
according to the Fourier mode $m_a$ where each $\Phi^{(a)}$ will have support in neighborhood $W_{a}$'s of the half plane $P_a$ as shown in the following Figure \ref{fig:supportlemma}. Explicitly, we can write 
$$
\Phi^{(a)} = \sum_{k> 0}\sum_{j}\sum_{n\in N} \alpha_{jk}^n(a) (\bmc^{-km_a}\check{\partial}_{n}) t^{j},
$$
for some $(0,1)$ form $\alpha_{jk}^n(a)$ on defined in $\check{p}^{-1}(W_a)$.

\begin{figure}[h]
\begin{center}
\includegraphics[scale=0.8]{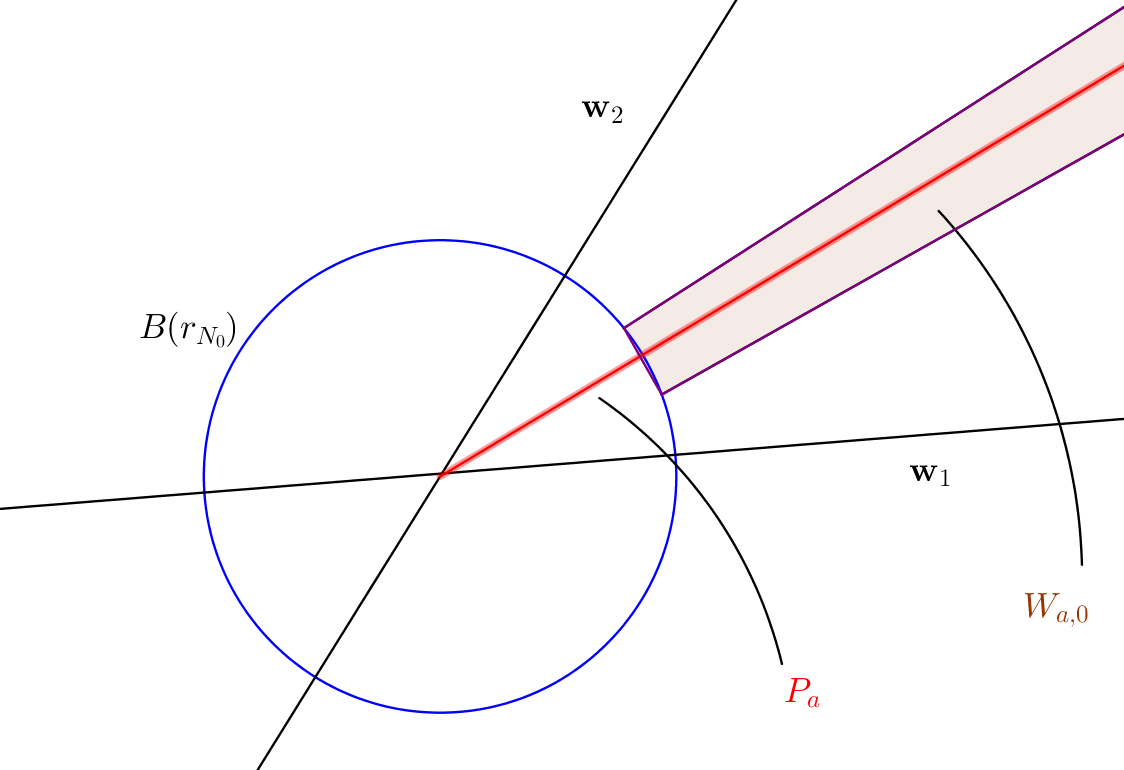}
\end{center}
\caption{}\label{fig:supportlemma}
\end{figure}

Futhermore, $\Phi^{(a)}$ is itself a solution to the MC equation ($mod\;\mathbf{m}^{N_0+1}$) in $\check{p}^{-1}(W_a)$ which is gauged equivalent to $0$ (since $\check{p}^{-1}(W_a)$ has no nontrivial deformation). Similar to the Section \ref{onewall}, we fix the choice of $\facc_a$ satisfying $e^{\facc_a}*0 = \Phi^{(a)}$ by choosing a suitable gauge fixing condition of the form $\check{P}_a(\facc_a)=0$ in $\check{p}^{-1}(W_a)$. According to Figure \ref{fig:supportlemma}, we notice that there is a decomposition $W_a \setminus P_a = W_a^+ \coprod W_a^-$ according to orientation, and we can choose a based point $q_a \in W_a^- \setminus supp(\Phi^{(a)})$ to define the projection operator $\check{P}_a$ similar to Definition \ref{pathspacehomotopy} using Fourier transform.

We found that there is an asymptotic expansion (according to order of $\lam^{-1}$) of each $\facc_a$ with leading order term being step functions valued in $\mathbf{h}_a^0$ (the Lie algebra of the tropical vertex group). The expansion is of the form
$$
\facc_a = \facc_{a,0} + \sum_{\substack{k\geq 1,\;n\in N \\ j_1+j_2 \leq N_0}} \mathcal{O}_{loc}(\lam^{-1/2}) \cdot (\bmc^{-km_a} \check{\partial}_n) t_1^{j_1}t_2^{j_2}\;\;(mod\;\mathbf{m}^{N_0+1}),
$$
where
\begin{equation*}
\facc_{a,0}
= \left\{
\begin{array}{ll}
Log(\Theta_a) & \text{on $\check{p}^{-1}(W_a^+)$},\\
0& \text{on $\check{p}^{-1}(W_a^-)$},
\end{array}\right.
\end{equation*}
for some element $\Theta_a$ in the tropical vertex group.

A scattering diagram $\mathscr{D}(\Phi)$ ($mod\;\mathbf{m}^{N_0+1}$) with support on $\bigcup_{P_a \in Plane(N_0)} P_a$ can now be constructed from $\Phi$ by adding new walls supported on $P_a$ with wall crossing factor $\Theta_a\;(mod\;\mathbf{m}^{N_0+1})$. A scattering diagram $\mathscr{D}(\Phi)$ with support on $\bigcup_{N_0 \in \inte_{>0}}\bigcup_{P_a \in Plane(N_0)} P_a$ can be obtained by taking limit. We have the statement of our main theorem as follows.

\begin{theorem}[=Theorem \ref{theorem2}]
\label{scatteringtheorem2}
The scattering diagram $\mathscr{D}(\Phi)$ associated to the Maurer-Cartan element $\Phi$ is monodromy free.
\end{theorem}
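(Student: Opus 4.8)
The plan is to first reduce the statement to the more general Theorem~\ref{scatteringtheorem1} and then to prove the latter. The reduction is immediate: Lemma~\ref{theorem_support} shows that each summand $\Phi^{(a)}$ in the decomposition \eqref{eqn:Phi_decomposition} is a Maurer--Cartan solution asymptotically supported on the wall $P_a$, and Theorem~\ref{theorem_asymptotic_expansion} identifies the semi-classical limit of the gauge $\facc_a$ with a step function jumping by $Log(\Theta_a)$; hence $\Phi$ is precisely an element of the type $\Psi$ considered in Theorem~\ref{scatteringtheorem1}. For that theorem, I would fix an order $N_0$, work modulo $\mathbf{m}^{N_0+1}$ (so that there are only finitely many walls and $\mathscr{D}(\Psi)$ is genuinely a scattering diagram), take $\lam\to\infty$ only at the end, and argue by induction on $N_0$; the $\mathbf{m}$-adic limit then gives the full statement. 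Fix a small embedded loop $\gamma$ encircling $P_1\cap P_2$ inside the annulus $A_{r_{N_0}}$, transverse to the support and missing the singular set of $\mathscr{D}(\Psi)$, meeting the walls $\mathbf{w}^{(a_1)},\dots,\mathbf{w}^{(a_\ell)}$ in this cyclic order; these cut a tubular neighbourhood of $\gamma$ into chambers $\mathcal{C}_1,\dots,\mathcal{C}_\ell$ on each of which, by Lemma~\ref{theorem_support} together with the support of $\incoming$, $\Psi$ vanishes identically.

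The heart of the argument is to realise $\prod^{\rightarrow}_{\gamma}\Theta_{\mathbf{w}}$ as the holonomy of a globally trivialisable object. Since $\check{X}_0\cong(\comp^*)^n$ carries no non-trivial deformations, near each wall $P_{a_s}$ the restriction $\Psi|_{W_{a_s}}=\Phi^{(a_s)}$ is gauge equivalent to $0$ by a gauge $\facc_{a_s}$ whose semi-classical limit is $0$ on $W_{a_s}^-$ and $Log(\Theta_{a_s})$ on $W_{a_s}^+$ (Theorem~\ref{theorem_asymptotic_expansion}); conjugation by $\exp(Log(\Theta_{a_s}))=\Theta_{a_s}$ is exactly the wall-crossing automorphism of $\mathscr{D}(\Psi)$ at $\mathbf{w}^{(a_s)}$. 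Composing these transformations along $\gamma$ yields, on one hand, the path-ordered product in the statement (once the two crossings of each line $P_1,P_2$ are taken with opposite orientations, giving the factors $\Theta_i^{\pm1}$). On the other hand, because $\Psi$ is a genuine, globally defined Maurer--Cartan solution on $\check{p}^{-1}(A_{r_{N_0}})$ that vanishes on the chambers, the composite is the identity: inductively, granting $\prod^{\rightarrow}_{\gamma}\Theta_{\mathbf{w}}\equiv\mathrm{Id}$ modulo $\mathbf{m}^{N_0}$, the order-$N_0$ obstruction is a cohomology class represented, via the semi-classical identification above, by the bracket combination $-\half\sum_{k+k'=N_0}[\Psi_k,\Psi_{k'}]$; but the Maurer--Cartan equation \eqref{intro:MCequation} forces this to equal $\bar{\partial}\Psi_{N_0}$ with $\Psi_{N_0}$ a globally defined $0$-cochain, so the class is exact, hence zero, and the induction closes. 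This shows $\mathscr{D}(\Psi)$ is monodromy free, and specialising to $\Psi=\Phi$ gives the assertion.

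I expect the main obstacle to be the interface between the hard analysis and the combinatorics in the second step. One must (i) keep the $\mathcal{O}_{loc}(\lam^{-1/2})$ errors of Theorem~\ref{theorem_asymptotic_expansion} uniform enough that $\lim_{\lam\to\infty}$ commutes with the finite but nonabelian composition of wall-crossing automorphisms around $\gamma$; (ii) use the support control of Lemma~\ref{theorem_support} to guarantee that near a generic point of a single wall only one Fourier mode $m_{a_s}$ is active, so that the one-wall analysis of Section~\ref{onewall} applies verbatim and $\Psi$ really does vanish on the chambers; and (iii) justify that the order-$N_0$ monodromy obstruction is precisely the cohomology class of the Maurer--Cartan bracket term, which is the point where the tree expansion \eqref{solve_sum_over_trees} and the explicit homotopy $\check{H}$ enter. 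Once these are in place the telescoping is formal, and the remark identifying $\mathscr{D}(\Phi)$ with the Kontsevich--Soibelman completion $\mathcal{S}(\mathscr{D})$ follows from the uniqueness in Theorem~\ref{KSscatteringtheorem}.
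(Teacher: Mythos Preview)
Your reduction of Theorem~\ref{scatteringtheorem2} to Theorem~\ref{scatteringtheorem1} via Lemma~\ref{theorem_support} and Theorem~\ref{theorem_asymptotic_expansion} is exactly the argument the paper gives. Note, however, that this is a survey article: the paper does \emph{not} supply a proof of Theorem~\ref{scatteringtheorem1} itself, deferring that to \cite{kwchan-leung-ma}. So the second half of your proposal is going beyond what the paper does, and that is where the trouble lies.

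Your inductive step for Theorem~\ref{scatteringtheorem1} contains a genuine gap. You claim that, assuming $\prod^{\rightarrow}_\gamma\Theta_{\mathbf{w}}\equiv\mathrm{Id}$ modulo $\mathbf{m}^{N_0}$, the order-$N_0$ monodromy obstruction is the cohomology class of $-\tfrac12\sum_{k+k'=N_0}[\Psi_k,\Psi_{k'}]$, and that the Maurer--Cartan equation makes this equal to $\bar\partial\Psi_{N_0}$ with ``$\Psi_{N_0}$ a globally defined $0$-cochain''. This does not type-check: $\Psi_{N_0}\in KS^1$, and both $\bar\partial\Psi_{N_0}$ and $[\Psi,\Psi]_{N_0}$ live in $KS^2$, whereas the monodromy obstruction $\mathrm{Log}\bigl(\prod^{\rightarrow}_\gamma\Theta_{\mathbf{w}}\bigr)\ (\mathrm{mod}\ \mathbf{m}^{N_0})$ is a degree-$0$ element of $\mathfrak{h}\otimes\mathbf{m}^{N_0}/\mathbf{m}^{N_0+1}$. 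No ``semi-classical identification'' bridges this degree gap, and the exactness argument you give is comparing objects in different complexes.

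What actually makes the argument close is not an order-by-order cohomological obstruction computation but rather the existence of a \emph{single global} gauge $\facc$ on $\check{p}^{-1}(A_{r_{N_0}})$ (obtained from the globally defined homotopy $\check{H}$ of Definition~\ref{pathspacehomotopy}) with $e^{\facc}*0=\Psi$. On each chamber $\Psi=0$ forces $e^{\facc}$ to lie in the tropical vertex group; comparing $\facc$ with the local gauges $\facc_a$ shows that the value of $e^{\facc}$ jumps by exactly $\Theta_a$ across $P_a$; and single-valuedness of the global $\facc$ around $\gamma$ then forces the ordered product of jumps to be the identity. Your proposal correctly identifies the ingredients (i)--(iii) needed to make this precise, but the mechanism you describe for why the product telescopes to $\mathrm{Id}$ is not the right one.
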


The relation between the semi-classical limit of the Maurer-Cartan equation and scattering diagram can be conceptually understood through the following theorem. Suppose we have an increasing collection of half planes $\{Plane(N_0)\}_{N_0 \in \inte_{>0}}$ containing a fixed same codimension $2$ subspace $Q$ (where $Q$ play the role of $P_1\cap P_2$ in the previous theorem), we are looking at a general $\Psi$ with asymptotic support defined as follows.

\begin{definition}\label{asymptotic_support}
An element $\Psi \in KS^1(\check{X}_0)\otimes \mathbf{m}$ is said to have {\em asymptotic support on $Plane(N_0)$} (\text{mod $\mathbf{m}^{N_0+1}$}) if we can find $r_{N_0}>0$, a small enough open neighborhood $W_{a}$ of $P_a$ such that we can write
$$
\Psi = \sum_{a \in Plane(N_0)} \Psi^{(a)}\;\;(mod\;\mathbf{m}^{N_0+1})
$$
in $A=B_0 \setminus B(r_{N_0})$ according to Fourier modes. We require $supp(\Psi^{(a)}) \subset \check{p}^{-1}(W_a)$ and satisfying the $\Psi^{(a)}$ satisfying the MC equation in $\check{p}^{-1}(W_a)$ for each $a$ ($W_a$ is a neighborhood of $P_a$ as in Figure \ref{fig:supportlemma}). Futhermore, we require the unique $\facc_a$ satisfying $e^{\facc_a}* 0 = \Psi^{(a)}$ in $\check{p}^{-1}(W_a)$ determined by the gauge fixing condition $\check{P}_a(\facc_a) = 0$ to have the following asymptotic expansion

\begin{equation*}
\facc_{a}
= \left\{
\begin{array}{ll}
Log(\Theta_a) + \displaystyle \sum_{j}\sum_{\substack{k\geq 1\\ n \in N}} \mathcal{O}_{loc}(\lam^{-1/2}) \bmc^{-km
_a} \partial_n t^j & \text{on $\check{p}^{-1}(W_a^+)$},\\
\displaystyle \sum_j \sum_{\substack{k\geq 1\\n \in N}} \mathcal{O}_{loc}(\lam^{-1/2}) \bmc^{-km} \partial_n t^j & \text{on $\check{p}^{-1}(W_a^-)$},
\end{array}\right.
\end{equation*}
\noindent for some $\Theta_a$ in the tropical vertex group.
\end{definition}

A scattering diagram $\mathscr{D}(\Psi)$ can be associated to $\Psi$ having asymptotic support on $\{Plane(N_0)\}_{N_0}$ using the same process as above and we have the following theorem. Then the following theorem simply says that the process of solving Maurer-Cartan equation is limit to the process of completing a scattering diagram to a monodromy free one as $\lam \rightarrow \infty$.

\begin{theorem}[=Theorem \ref{theorem1}]
\label{scatteringtheorem1}
If $\Psi \in KS^*_{\check{X}_0}\otimes \mathbf{m}$ is a solution to the Maurer-Cartan equation \eqref{cpxstructure_MCequation} having asymptotic support on $\{Plane(N_0)\}_{N_0 \in \inte_{>0}}$, then the associated $\mathscr{D}(\Psi)$ is a monodromy free scattering diagram.
\end{theorem}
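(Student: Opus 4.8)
\emph{Proof proposal.} The plan is to produce a single \emph{global} gauge transformation trivialising $\Psi$ on all of $\check{X}_0$, and then to read monodromy-freeness of $\mathscr{D}(\Psi)$ directly off the fact that its semi-classical limit is a single-valued step function. First I would fix $N_0$ and work modulo $\mathbf{m}^{N_0+1}$, so that only the finitely many walls $P_a$, $a\in Plane(N_0)$, are involved; since the elements $\Theta_a$, and hence the identity to be proved, are independent of $\lam$, it suffices to establish it in the limit $\lam\to\infty$. Because $\Psi\in KS^1_{\check{X}_0}\otimes\mathbf{m}$ is defined on all of $\check{X}_0\cong(\comp^*)^n$ and satisfies the Maurer--Cartan equation \eqref{cpxstructure_MCequation}, and $(\comp^*)^n$ admits no nontrivial deformations of complex structure --- the very input already used in Section \ref{onewall} for the single-wall ansatz and for each summand $\Phi^{(a)}$ --- one can solve $e^{\facc}*0=\Psi$ order by order in $\mathbf{m}$, all obstructions lying in $H^{0,1}(\check{X}_0,T^{1,0})=0$; fixing a gauge (say $\check{P}\facc=0$ for the homotopy of Definition \ref{pathspacehomotopy}) pins down a particular global $\facc$.

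Next I would analyse $\facc$ as $\lam\to\infty$. Off the union $\bigcup_a W_a$ of the wall neighbourhoods one has $\Psi=0$, so there $\facc$ is $\bar\partial$-closed and, by the gauge fixing, is $\mathcal{O}_{loc}(\lam^{-1/2})$-close to a section of $\mathfrak{h}\otimes\mathbf{m}$ that is locally constant on each chamber of $B_0\setminus\bigcup_a P_a$; thus $\facc$ converges to a step function $\facc_0$ on $\check{X}_0\setminus\check{p}^{-1}(\bigcup_a P_a)$, and $\facc_0$ is single-valued because each $\facc$ is a genuine section on $\check{X}_0$. It remains to identify the jump of $\facc_0$ across each $P_a$ with $Log(\Theta_a)$. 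For this I would take the $W_a$ thin and pairwise disjoint inside the annulus $A=B_0\setminus\overline{B(r_{N_0})}$ --- distinct half-planes $P_a$ meet only along $Q$, which lies in the removed ball --- so that near $P_a$ one has $\Psi=\Psi^{(a)}$; then $\facc$ and the wall-gauge $\facc_a$ of Definition \ref{asymptotic_support} both trivialise $\Psi^{(a)}$ there, so they differ by a $\bar\partial$-closed section defined on a whole neighbourhood of $P_a$, which hence carries no jump across $P_a$. Therefore the jump of $\facc_0$ across $P_a$ coincides with that of $\facc_{a,0}$, namely $Log(\Theta_a)$, once the normalisation $\check{P}\facc=0$ is reconciled with $\check{P}_a\facc_a=0$ (both force the gauge to vanish to leading order on $W_a^-$).

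Finally, choose a loop $\gamma$ around $Q$ inside $A$ meeting the walls transversally and follow $\facc_0$ along it: starting from a value in the initial chamber, it is updated by the Baker--Campbell--Hausdorff product with $Log(\Theta_a)$ (up to sign, according to the crossing direction) each time $\gamma$ crosses $P_a$, and after one full turn it returns to the same chamber, hence to the same value. This forces the iterated Baker--Campbell--Hausdorff product of the $Log(\Theta_a)$ encountered along $\gamma$ to vanish; exponentiating, the path ordered product $\prod^{\rightarrow}_{\gamma}\Theta_a$ around $Q$ is the identity, which is precisely the statement that $\mathscr{D}(\Psi)$ is monodromy free. Letting $N_0\to\infty$ gives the full diagram.

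I expect the main obstacle to be the identification step of the second paragraph: the wall-gauges $\facc_a$ are each normalised at their own base point $q_a\in W_a^-$ while $\facc$ is a single global trivialisation, so proving that their closed discrepancy contributes no jump, and that the $\mathcal{O}_{loc}(\lam^{-1/2})$ tails picked up across the (finitely many, but order-$N_0$) walls genuinely disappear in the limit, rests on the careful asymptotic estimates behind Definition \ref{asymptotic_support} and the homotopy of Definition \ref{pathspacehomotopy}. One should also observe that no affine monodromy around $Q$ enters, which is automatic here because the base $B_0=N_\real$ is a genuine flat vector space; over a curved affine base that contribution would have to be carried along.
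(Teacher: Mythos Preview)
The paper is a survey and does not itself prove Theorem~\ref{scatteringtheorem1}; the result is stated and the reader is referred to \cite{kwchan-leung-ma} (see also the remark following Theorem~\ref{theorem_asymptotic_expansion} in the introduction). There is therefore no proof in the present text to compare your proposal against directly.

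That said, your strategy is sound and lines up with the ingredients the survey does display. The exposition in Section~\ref{sec:main_results_higher_dim} emphasises the \emph{local} wall-gauges $\facc_a$ (each normalised by $\check{P}_a\facc_a=0$) and their leading-order step-function behaviour as the definition of $\Theta_a$; your idea of producing instead a single \emph{global} gauge $\facc$ on all of $\check{X}_0$ and reading monodromy-freeness off the single-valuedness of its semi-classical limit $\facc_0$ is a clean repackaging of the same data, and is exactly the kind of argument one expects behind the scenes. The key identification step you flag is correct in outline: on $\check{p}^{-1}(W_a)$ both $\facc$ and $\facc_a$ trivialise $\Psi^{(a)}$, so their BCH-quotient is $\bar\partial$-closed, hence smooth across $P_a$ and contributes no jump at leading order; and at fixed $N_0$ the finitely many $\mathcal{O}_{loc}(\lam^{-1/2})$ tails are harmless in the limit.

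One small point of phrasing: the jump of $\facc_0$ across $P_a$ is multiplicative at the group level, i.e.\ $e^{\facc_0}|_{W_a^+}=\Theta_a\cdot e^{\facc_0}|_{W_a^-}$, so what returns to the identity after traversing $\gamma$ is the path-ordered product of the $\Theta_a^{\pm1}$, not an ``iterated Baker--Campbell--Hausdorff product of the $Log(\Theta_a)$'' as you write midway; your final sentence has it right, but the intermediate wording should be tightened.
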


\bibliographystyle{amsplain}
\bibliography{geometry}

\end{document}